\newcommand{\nc}{\newcommand}
\nc{\dmo}{\DeclareMathOperator}
\dmo{\ra}{\rightarrow}
\dmo{\Cl}{Cl}
\dmo{\Int}{Int}
\dmo{\Image}{Im}
\newcommand{\N}{\mathbb{N}}
\newcommand{\Z}{\mathbb{Z}}
\newcommand{\R}{\mathbb{R}}
\newcommand{\Homeo}{\mathrm{Homeo}}
\newcommand{\SHomeo}{\mathrm{SHomeo}}
\newcommand{\MCG}{\mathrm{MCG}}
\newcommand{\SW}{\mathrm{SW}}
\newcommand{\SB}{\mathrm{SB}}
\newcommand{\Fix}{\mathrm{Fix}}
\newcommand{\id}{\mathrm{id}}
\tikzset{->-/.style={decoration={
  markings,
  mark=at position #1 with {\arrow[scale=3]{>}}},postaction={decorate}}}
\nc{\nt}{\newtheorem}
\newtheorem{thm}{{\bf Theorem}}[section]
\newtheorem{lem}[thm]{{\bf Lemma}}
\newtheorem{cor}[thm]{{\bf Corollary}}
\newtheorem{prop}[thm]{{\bf Proposition}}
\newtheorem{claim}[thm]{Claim} 
\newtheorem{remark}[thm]{Remark}
\newtheorem{ques}[thm]{Question}
\newtheorem{question}[thm]{Question}
\numberwithin{equation}{section}
\theoremstyle{definition}
\newtheorem{ex}[thm]{Example}
\title[Goeritz groups of bridge decompositions]
{Goeritz groups of bridge decompositions}
\author[S. Hirose]{%
Susumu Hirose
}
\address{%
Department of Mathematics,  
Faculty of Science and Technology, 
Tokyo University of Science, 
Noda, Chiba, 278-8510, Japan}
\email{%
hirose\_susumu@ma.noda.tus.ac.jp
}
\author[D. Iguchi]{%
Daiki Iguchi
}
\address{%
Department of Mathematics, 
Hiroshima University, 1-3-1 Kagamiyama, Higashi-Hiroshima, 739-8526, Japan}
\email{%
d200643@hiroshima-u.ac.jp
}
\author[E. Kin]{%
    Eiko Kin
}
\address{%
       Department of Mathematics, Graduate School of Science, Osaka University Toyonaka, Osaka 560-0043, JAPAN
}
\email{%
        kin@math.sci.osaka-u.ac.jp
}
\author[Y. Koda]{%
Yuya Koda
}
\address{%
Department of Mathematics, 
Hiroshima University, 1-3-1 Kagamiyama, Higashi-Hiroshima, 739-8526, Japan}
\email{%
ykoda@hiroshima-u.ac.jp
} 
\subjclass[2020]{
Primary 57K20, Secondary 57K10, 57M12, 37B40, 37E30}
\keywords{%
mapping class group, bridge decomposition, curve complex, pseudo-Anosov, entropy.}
\thanks{%
%S. H.  is supported by 
%Grant-in-Aid for
%Scientific Research (C) (No. 24540096),
%Japan Society for the Promotion of Science. 
%E. K. is supported by 
%Grant-in-Aid for
%Scientific Research (C) (No. 15K04875), 
%Japan Society for the Promotion of Science. 
S. H. is supported by JSPS KAKENHI Grant
 Numbers JP16K05156 and JP20K03618. 
E. K. is supported by JSPS KAKENHI Grant
 Number JP18K03299. 
Y. K. is supported by JSPS KAKENHI Grant
 Numbers JP17K05254, JP17H06463, JP20K03588 
 and JST CREST Grant Number JPMJCR17J4.}
\begin{document}

\maketitle

\begin{abstract}
For a bridge decomposition of a link in the $3$-sphere, 
we define the Goeritz group to be 
the group of isotopy classes of orientation-preserving homeomorphisms 
of the $3$-sphere that preserve each of the bridge sphere and link setwise. 
%For a bridge decomposition of a link in the $3$-sphere, we 
%define the Goeritz group to be 
%the group of isotopy classes of orientation-preserving homeomorphisms of the 3-sphere
%that preserve each of the bridge sphere and link setwise. 
After describing basic properties of this group, we 
discuss the asymptotic behavior of the minimal pseudo-Anosov
entropies. 
This gives an application to the asymptotic behavior of 
the minimal entropies for the original Goeritz groups of Heegaard splittings 
of the $3$-sphere and the real projective space. 
\end{abstract}

%%%%%%%%%%%%%%%%%%%%%%%%%%%%%%%%%%%%%%%%%%%%%%%%
%
%							Introduction
%
%%%%%%%%%%%%%%%%%%%%%%%%%%%%%%%%%%%%%%%%%%%%%%%%

\section*{Introduction}	
\label{section_introduction}

Every closed orientable 3-manifold $M$ can be decomposed  
into two handlebodies $V^+$ and $V^-$ by cutting it along a closed orientable surface $\Sigma$ of genus $g$ for 
some $g \geq 0$. 
Such a decomposition is called a {\it genus-$g$ Heegaard splitting} of $M$ and denoted by $(M; \Sigma)$. 
The {\it Goeritz group} $\mathcal{G} (M; \Sigma)$  
of the Heegaard splitting $(M; \Sigma) $
is then defined to be the group of isotopy classes of orientation-preserving self-homeomorphisms 
of $M$ that preserve each of the two handlebodies setwise. 
We note that this group can naturally be thought of as 
a subgroup of the mapping class group of the surface $\Sigma$. 
Indeed, restricting the maps in concern to $\Sigma$, we can describe the Goeritz group $\mathcal{G} (M; \Sigma)$ as 
$$ \mathcal{G} (M; \Sigma) = \MCG  (V^+) \cap  \MCG  (V^-) <  \MCG  (\Sigma), $$ 
where $ \MCG  ( ~ \cdot ~)$ is the mapping class group. 
The structure of this group is studied by many authors
(see e.g. recent papers 
\cite{JohnsonRubinstein13, JohnsonMcCullough13, FreedmanScharlemann18, ChoKoda19, IguchiKoda19, Zupan19} and references therein). 

In this paper, we define an analogous group, which we also call the 
{\it Goeritz group}, for a bridge decomposition of 
a link, and study some of its interesting properties. 
Recall that an {\it $n$-bridge decomposition} $(L; S)$ of a link $L$ in the 
3-sphere $S^3$ is a splitting of 
$(S^3, L)$ into two trivial $n$-tangles $(B^+, B^+ \cap L)$ and $(B^-, B^- \cap L)$ 
along a sphere $S \subset S^3$. 
We define the Goeritz group $\mathcal{G} (L; S)$ of the bridge decomposition $(L; S)$ to be the group of 
isotopy classes of orientation-preserving self-homeomorphisms of $S^3$ that 
preserve each of the trivial tangles setwise. 
(See Section \ref{section_Goeritz groups of bridge decompositions} for the 
definition.) 
Similarly to the case of Heegaard splittings, this group can be thought of as a 
subgroup of the mapping class group $ \MCG  (S, S \cap L)$, 
that is, the group of isotopy classes of orientation-preserving self-homeomorphisms of $S$ 
that preserve $S \cap L$ setwise. 
More precisely, restricting the maps in concern to $S$, 
the Goeritz group $\mathcal{G} (L; S)$ can be written as 
$$  \mathcal{G} (L; S) =  \MCG  (B^+, B^+ \cap L) \cap  \MCG  (B^-, B^- \cap L) <  \MCG  (S, S \cap L), $$
where $ \MCG  (B^\pm, B^\pm \cap L)$ is the group of isotopy classes of orientation-preserving 
self-homeomorphisms of the $3$-ball $B^\pm$ that preserve $B^\pm \cap L$ setwise.

The Goeritz groups of Heegaard splittings and those of bridge decompositions are 
related as follows. 
Given an $n$-bridge decomposition 
$$ (S^3, L) = (B^+, B^+ \cap L) \cup_S (B^-, B^- \cap L) $$
with $n \geq 2$, let  $q: M_L \to S^3$  be the $2$-fold covering of $S^3$ branched over $L$. 
Then that bridge decomposition induces a Heegaard splitting 
$M_L = V^+ \cup_{\Sigma} \cup V^- $, where $V^{\pm} := q^{-1} (B^{\pm})$ and $\Sigma := q^{-1} (S)$. 
Let $T$ be the non-trivial deck transformation of the covering. 
Note that $T|_\Sigma$ is a hyperelliptic involution of $\Sigma$. 
We define the {\it hyperelliptic Goeritz group} $\mathcal{HG}_T (M_L ; \Sigma)$ to be the group of orientation-preserving, fiber-preserving self-homeomorphisms 
of $M_L$ that preserve each of the two handlebodies setwise modulo isotopy through 
fiber-preserving homeomorphisms. 
Here a self-homeomorphism of $M_L$ is said to be {\it fiber-preserving} if 
it takes the fiber (with respect to the projection $q$) of each point in $S^3$ to the fiber of some point in $S^3$. 
Then we prove in Section \ref{subsec:Relation to hyperelliptic Goeritz groups of Heegaard splittings} 
the following theorem,  
which is the ``hyerelliptic Goeritz group version" of the original 
Birman-Hilden's theorem \cite{BirmanHilden71} for hyperelliptic mapping class groups. 
\begin{thm}
\label{introthm:HG(M;V)/i}
We have $ \mathcal{HG}_{T} (M_L ; \Sigma) / \langle [T] \rangle \cong \mathcal{G} (L; S) $. 
\end{thm}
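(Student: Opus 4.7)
The plan is to build mutually inverse homomorphisms
\[
\Phi\colon \mathcal{HG}_T(M_L;\Sigma)/\langle [T]\rangle \longrightarrow \mathcal{G}(L;S)
\quad\text{and}\quad
\Psi\colon \mathcal{G}(L;S) \longrightarrow \mathcal{HG}_T(M_L;\Sigma)/\langle [T]\rangle,
\]
modelled on the classical Birman--Hilden correspondence. For $\Phi$: any orientation-preserving, fiber-preserving self-homeomorphism $f$ of $M_L$ with $f(V^\pm)=V^\pm$ sends $q$-fibers to $q$-fibers, so it descends to a unique self-homeomorphism $\bar f$ of $S^3$ satisfying $q\circ f=\bar f\circ q$. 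The map $\bar f$ is orientation-preserving, preserves $L$ (since $f$ preserves the branch set $q^{-1}(L)$), and preserves the tangle decomposition because $f$ preserves the handlebody decomposition. A fiber-preserving isotopy of $f$ descends to an isotopy of $\bar f$ in $\MCG(S, S\cap L)$ preserving the tangles, so $\Phi$ is a well-defined homomorphism with $\Phi([T])=[\id]$.

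For $\Psi$: fix a representative $g$ of $[g]\in\mathcal{G}(L;S)$. The branched covering $q$ is classified by the homomorphism $\pi_1(S^3\setminus L)\to\Z/2$ sending every meridian to the nontrivial element. Because $g$ is orientation-preserving and $g(L)=L$, the induced map $g_*$ permutes meridians to meridians and thus preserves this homomorphism. Standard covering-space theory then lifts $g|_{S^3\setminus L}$ to a self-homeomorphism of the unbranched cover; equivariant tubular neighborhoods of $L$ and of $q^{-1}(L)$ (on which $T$ acts as a $\pi$-rotation about the knot axis) allow this lift to be extended across the branch locus to a fiber-preserving self-homeomorphism $\tilde g$ of $M_L$. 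The two available lifts differ by composition with $T$, so the class of $\tilde g$ in $\mathcal{HG}_T/\langle [T]\rangle$ is unambiguously defined, and because $g$ preserves each $(B^\pm, B^\pm\cap L)$ setwise, $\tilde g$ preserves each $V^\pm$ setwise.

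The main obstacle is verifying that $\Psi$ is well defined on isotopy classes, that is, that an isotopy $(g_t)_{t\in[0,1]}$ in $\mathcal{G}(L;S)$ lifts to a fiber-preserving isotopy in $\mathcal{HG}_T$. Restricted to the link complement, once the starting lift $\tilde g_0$ is fixed, the isotopy lifts uniquely by covering-space theory; the subtle point is to check that the lifted path extends continuously and fiber-preservingly over the branch locus $q^{-1}(L)$. I plan to handle this by working inside a $g_t$-invariant tubular neighborhood of $L$ (obtained by averaging), on whose preimage $T$ is the standard involution, and then glueing the local extensions to the unique lift on the complement. This is the three-dimensional analogue of the Birman--Hilden isotopy-lifting argument and is the most delicate step.

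Once both $\Phi$ and $\Psi$ are in hand, the composition $\Phi\circ\Psi$ is the identity directly from the construction, while $\Psi\circ\Phi$ is the identity because the two lifts of $\bar f$ are precisely $f$ and $T\circ f$, which represent the same class modulo $\langle [T]\rangle$. Hence $\Phi$ descends to the asserted isomorphism $\mathcal{HG}_T(M_L;\Sigma)/\langle [T]\rangle\cong\mathcal{G}(L;S)$.
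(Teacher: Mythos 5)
Your proposal is correct and follows essentially the same route as the paper: the paper also establishes the isomorphism via the Birman--Hilden descend-and-lift correspondence, packaging your two constructions as the statement that the natural descent map $\mathcal{HG}_{T}(M_L;\Sigma)\to\mathcal{G}(L;S)$ is surjective with kernel $\langle[T]\rangle$, where surjectivity and the kernel computation are exactly your lifting of homeomorphisms and of isotopies (Lemma \ref{lem:lifting a homeomorphism} in the paper, invoked together with Theorem \ref{thm:Birman-Hilden}). The delicate point you flag --- extending the lifted isotopy fiber-preservingly across the branch locus --- is the same one the paper defers to ``basic algebraic topology arguments'' and to Proposition A.4 of \cite{HiroseKin17}, so your treatment is at the same level of detail as the paper's.
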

%in Theorem \ref{thm:HG(M;V)/i} that 
%$$ \mathcal{HG}_{T} (M_L; \Sigma) / \langle [T] \rangle \cong \mathcal{G} (L; S),  $$
%which is the hyerelliptic Goeritz group version of the original 
%Birman-Hilden's theorem \cite{BirmanHilden71} for hyperelliptic mapping class groups. 
It is then natural to think about a relationship between $\mathcal{HG}_{T} 
(M_L ;\Sigma)$ 
and the usual Goeritz group $\mathcal{G} (M_L ; \Sigma)$. 
In the same section, we give a rigorous proof of the following naturally expected result. 
\begin{thm}
\label{introthm:hyperelliptic Goeritz group as an intersection}
We have $ \mathcal{HG}_T (M_L; \Sigma)  \cong \mathcal{G} (M_L ; \Sigma) \cap \mathcal{H} (\Sigma)$, 
where $\mathcal{H} (\Sigma)$ is the hyperelliptic mapping class group with respect to 
the hyperelliptic involution $T|_{\Sigma}$.  
\end{thm}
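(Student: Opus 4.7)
The plan is to define the natural ``forgetful'' homomorphism
\[
\Phi \colon \mathcal{HG}_T(M_L;\Sigma) \longrightarrow \mathcal{G}(M_L;\Sigma) \cap \mathcal{H}(\Sigma),
\qquad [f] \longmapsto [f],
\]
and show it is an isomorphism. It is well defined because a fiber-preserving handlebody-preserving $f$ is in particular handlebody-preserving (so $[f]\in\mathcal{G}(M_L;\Sigma)$), its restriction $f|_\Sigma$ commutes with $T|_\Sigma$ on the nose (so $[f|_\Sigma]\in\mathcal{H}(\Sigma)$ via the inclusion $\mathcal{G}(M_L;\Sigma)\hookrightarrow\mathrm{MCG}(\Sigma)$), and every fiber-preserving isotopy is an isotopy.

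For injectivity, suppose $\Phi([f])=1$, so $f$ is isotopic to $\mathrm{id}_{M_L}$ through handlebody-preserving homeomorphisms. In particular $[f|_\Sigma]=1$ in $\mathrm{MCG}(\Sigma)$, and the classical Birman--Hilden theorem on $\Sigma$ upgrades this to a $T|_\Sigma$-equivariant isotopy from $f|_\Sigma$ to $\mathrm{id}_\Sigma$. Since $T$ preserves $\Sigma$, we may pick a $T$-invariant bicollar of $\Sigma$ in $M_L$ and extend this surface isotopy to a $T$-equivariant handlebody-preserving isotopy supported there, reducing to the case where $f$ is the identity on a $T$-invariant collar of $\Sigma$. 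The problem then reduces on each handlebody $V^\pm$ to the following three-dimensional Birman--Hilden statement for the branched double cover $(V^\pm, T|_{V^\pm}) \to (B^\pm, B^\pm \cap L)$ of a trivial tangle: a $T|_{V^\pm}$-equivariant self-homeomorphism of $V^\pm$ that is the identity near $\partial V^\pm$ and isotopic to the identity in $V^\pm$ must be $T|_{V^\pm}$-equivariantly isotopic to the identity rel a neighborhood of $\partial V^\pm$.

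For surjectivity, take $[g]\in\mathcal{G}(M_L;\Sigma)\cap\mathcal{H}(\Sigma)$. The surface Birman--Hilden theorem provides a $T|_\Sigma$-equivariant $h\colon\Sigma\to\Sigma$ with $[h]=[g|_\Sigma]$ in $\mathrm{MCG}(\Sigma)$; since $\mathcal{G}(M_L;\Sigma)\hookrightarrow\mathrm{MCG}(\Sigma)$, an ambient isotopy extension inside each handlebody replaces $g$ by a handlebody-preserving representative $g'$ of the same class with $g'|_\Sigma=h$. Each restriction $g'|_{V^\pm}$ now has $T|_{V^\pm}$-equivariant boundary behavior, and we invoke the existence form of the same three-dimensional Birman--Hilden statement on $V^\pm$ to isotope $g'|_{V^\pm}$ rel $\partial V^\pm$ to a $T|_{V^\pm}$-equivariant homeomorphism $f^\pm$. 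The $f^\pm$ glue along $\Sigma$ (both equal $h$ there) to a $T$-equivariant handlebody-preserving $f$ on $M_L$ with $\Phi([f])=[g]$.

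The main obstacle is therefore the three-dimensional Birman--Hilden statement for the pair $(V^\pm, T|_{V^\pm})$: any homeomorphism of $V^\pm$ with $T|_\Sigma$-equivariant boundary behavior is rel-boundary isotopic to a $T|_{V^\pm}$-equivariant homeomorphism, and equivariant homeomorphisms that are isotopic rel boundary are equivariantly isotopic rel boundary. This should follow from the same equivariant isotopy extension arguments used in the proof of Theorem~A, combined with the Alexander-trick rigidity of the trivial tangle inside $B^\pm$ rel boundary.
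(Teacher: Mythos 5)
Your overall architecture is the same as the paper's: Theorem \ref{thm:hyperelliptic Goeritz group as an intersection} is proved there by showing that restriction to $\Sigma$ induces a bijection on $\pi_0$, with surjectivity and injectivity reduced to a handlebody-level statement, Lemma \ref{lem:Hirose-Kin 2017 Prop A.6} --- (1) every symmetric surface homeomorphism that extends over the handlebody extends symmetrically, and (2) isotopic symmetric homeomorphisms of a handlebody are symmetrically isotopic. These are precisely the existence and uniqueness halves of the ``three-dimensional Birman--Hilden statement'' that you isolate as your main obstacle, so your reduction is the correct one and matches the paper's.

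The genuine gap is that you never prove that statement, and the justification you sketch for it would not produce a proof. No Alexander trick applies to the pair $(B^\pm, B^\pm\cap L)$ rel $\partial B^\pm$ (the trivial tangle admits nontrivial motions rel the boundary sphere), and in any case the difficulty is not rigidity downstairs but the passage between the equivariant category in $V^\pm$ and the category of tangle pairs in $B^\pm$. In the paper, part (1) of Lemma \ref{lem:Hirose-Kin 2017 Prop A.6} requires Edmonds' theorem \cite{Edmonds86} to replace the images of a symmetric disk system by $\hat{\iota}$-equivariant disks disjoint from the branch locus, an irreducibility argument in $B\setminus\mathcal{A}$ to disjoin their projections, and the covering-space lifting Lemma \ref{lem:lifting a homeomorphism}; part (2) requires Theorem \ref{thm:Birman-Hilden}, a result (Proposition A.4 of \cite{HiroseKin17}) promoting a boundary isotopy of $(B,\mathcal{A})$ to an isotopy of the pair, path lifting, and a final argument ruling out that the lifted path terminates at $\hat{\iota}\circ\hat{f}_1$ rather than $\hat{f}_1$. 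None of this is delivered by ``equivariant isotopy extension.'' A smaller point: the rel-boundary form you ask for is what your gluing of the two handlebody isotopies actually needs, and it is slightly stronger than the statement of Lemma \ref{lem:Hirose-Kin 2017 Prop A.6}; if you supply the missing lemma, that strengthening has to be part of it.
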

%In Proposition \ref{thm:hyperelliptic Goeritz group as an intersection}, we give a rigorous proof of the 
%following: 
%$$ \mathcal{HG}_T (M_L; \Sigma)  \cong \mathcal{G} (M_L ; \Sigma) \cap \mathcal{H} (\Sigma), $$ 
%where $\mathcal{H} (\Sigma)$ is the usual hyperelliptic mapping class group with respect to $T|_{\Sigma}$, 
%and we regard both $\mathcal{G} (M_L ; \Sigma)$ and $\mathcal{H} (\Sigma)$ as subgroups of 
%$ \MCG  (\Sigma)$. 
In Theorem \ref{introthm:hyperelliptic Goeritz group as an intersection} 
we regard both $\mathcal{G} (M_L ; \Sigma)$ and $\mathcal{H} (\Sigma)$ as subgroups of 
$ \MCG  (\Sigma)$. 

Using the two isomorphisms in Theorems \ref{introthm:HG(M;V)/i} and 
\ref{introthm:hyperelliptic Goeritz group as an intersection}, 
we can obtain finite presentations for the Goeritz groups of 
$3$-bridge decompositions of $2$-bridge links, which is highly non-trivial, 
see Example \ref{ex:Goeritz group of some 3-bridge decomposition}. 

For a Heegaard splitting $M = V^+ \cup_\Sigma V^-$, 
Hempel \cite{Hempel01} introduced a measure of complexity 
called the {\it distance}. 
Roughly speaking, this is defined to be the distance between the sets of 
the boundaries of meridian disks of $V^+$ and 
$V^-$ in the curve graph $\mathcal{C} (\Sigma)$. 
The distance gives a nice way to describe the structure of the Goeritz groups 
of Heegaard splittings. 
In fact, by work of Namazi \cite{Namazi07} and Johnson \cite{Johnson10}, 
it turned out that the Goeritz group is always a finite group if the distance of the Heegaard splitting is at least $4$. 
This is completely different from the case for the splittings of distance at most $1$, 
where the Goeritz group is always an infinite group (see Johnson-Rubinstein \cite{JohnsonRubinstein13} and 
Namazi \cite{Namazi07}). 
%On the contrary, it is an easy fact that the Goeritz group is always an infinite group
%when the distance of the Heegaard splitting is at most $1$.  
The notion of distance can naturally be defined for bridge decompositions as well, 
see for example Bachman-Schleimer \cite{BachmanSchleimer05}. 
It is hence quite natural to expect that the Goeritz groups of bridge decompositions 
hold similar properties as above. 
In Example \ref{ex:distance and infinite order Goeritz groups} we actually see that 
the Goeritz group is always an infinite group 
when the distance of the bridge decomposition is at most $1$ 
(except the case of the $2$-bridge decomposition of the trivial knot), 
and in Section \ref{section_distance}, we prove the following. 
\begin{thm}
\label{introthm:finite Goeritz group} 
There exists a uniform constant $N>0$ such that 
for each integer $n$ at least $3$, 
if the distance of an $n$-bridge decomposition 
$(L; S)$ of a link $L$ in $S^3$
is greater than $N$, 
then $\mathcal{G}(L;S)$ is a finite group.   
\end{thm}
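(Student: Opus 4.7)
The plan is to adapt to the bridge-decomposition setting the strategy of Namazi and Johnson for Heegaard splittings of high Hempel distance. Consider the curve graph $\mathcal{C}(S, S \cap L)$ of the bridge sphere with its $2n$ marked points $S \cap L$, and the \emph{tangle disk sets} $\mathcal{D}^\pm \subset \mathcal{C}(S, S \cap L)$ consisting of isotopy classes of essential simple closed curves on $S \setminus L$ that bound disks in $B^\pm$ disjoint from $L$. Through the natural inclusion $\mathcal{G}(L;S) \hookrightarrow \MCG(S, S \cap L)$, each element of $\mathcal{G}(L;S)$ acts on $\mathcal{C}(S, S \cap L)$ and preserves both $\mathcal{D}^+$ and $\mathcal{D}^-$ setwise, while by definition the distance of $(L;S)$ equals $d_{\mathcal{C}(S, S \cap L)}(\mathcal{D}^+, \mathcal{D}^-)$.

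The first step is to establish that $\mathcal{D}^\pm$ are $K$-quasi-convex in $\mathcal{C}(S, S \cap L)$ for a constant $K$ independent of $n$. This can be done by a direct argument in the spirit of Masur--Minsky, or by lifting to the $2$-fold branched cover $q \colon M_L \to S^3$ from Theorem \ref{introthm:HG(M;V)/i}, where the tangle disk sets correspond to the usual compressing-disk sets of the Heegaard handlebodies $V^\pm$, for which uniform quasi-convexity is available via the recent uniform-hyperbolicity results for curve and disk graphs. The second step is to show that the existence of any infinite-order $\varphi \in \mathcal{G}(L;S)$ forces an upper bound on $d(\mathcal{D}^+, \mathcal{D}^-)$ depending only on $K$. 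If $\varphi$ is pseudo-Anosov on $(S, S \cap L)$, its quasi-axis in $\mathcal{C}(S, S \cap L)$ is $\varphi$-invariant and bi-infinite; quasi-convexity of the $\varphi$-invariant sets $\mathcal{D}^\pm$ places this axis within bounded distance of each, and the uniform lower bound on pseudo-Anosov translation lengths then yields the desired upper bound on $d(\mathcal{D}^+, \mathcal{D}^-)$. If $\varphi$ is reducible of infinite order, an iterate preserves a multicurve $\sigma$, and the bounded geodesic image theorem together with standard subsurface projection arguments relative to $\sigma$ produces an analogous bound.

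Setting $N$ to be the maximum of the two resulting constants, whenever $d(L;S) > N$ the Goeritz group $\mathcal{G}(L;S)$ consists entirely of torsion elements. To conclude finiteness, I would invoke standard structure theorems for mapping class groups of finite-type surfaces: a torsion subgroup of $\MCG(S, S \cap L)$ containing neither a pseudo-Anosov nor an infinite-order reducible is, by Ivanov's subgroup theorem, virtually abelian, and since abelian subgroups of $\MCG$ are finitely generated the torsion hypothesis forces this group to be finite. The principal obstacle is producing the quasi-convexity constant $K$ uniformly across all $n \geq 3$, since the classical constants for disk-set quasi-convexity can a priori depend on the complexity of the ambient surface; the branched-cover reduction is the natural route but requires care to ensure that the quasi-isometry onto the symmetric part of the curve graph of $\Sigma$ does not destroy the uniformity. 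Once the uniform $K$ is in hand, tracking constants through the quasi-convexity and bounded-geodesic-image machinery to obtain a universal $N$ should be essentially routine.
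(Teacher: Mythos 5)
Your proposal follows essentially the same route as the paper: uniform quasiconvexity of the disk sets $\mathcal{D}^\pm$ (which the paper establishes by the ``direct'' option you mention, via disk-surgery paths and Vokes's argument with curves with at most four corners, giving $K \le 1796$), then ruling out pseudo-Anosov elements by uniform hyperbolicity plus quasiconvexity and infinite-order reducible elements by the bounded geodesic image theorem applied to the canonical reducing system, and finally finiteness of torsion subgroups of $\MCG(\Sigma_{0,2n})$ (which the paper derives from Serre's torsion-free finite-index subgroup pulled through the hyperelliptic cover rather than from Ivanov's Tits alternative, though both work). The only substantive divergence is that you leave the uniform quasiconvexity as an acknowledged obstacle, whereas the paper resolves it exactly along the first of your two suggested routes, deliberately avoiding the branched-cover reduction because the relation between the bridge distance and the distance of the associated Heegaard splitting is not known.
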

We will show that the constant $N$ in the above theorem can actually be taken 
to be at most $3796$. This number is, however, far bigger than 
the constant $4$ in the case of Heegaard splittings. 
The condition ``$n \geq 3$" in the above theorem is crucial. 
In fact, we will see in Example \ref{ex:Goeritz group of 2-bridge decomposition} that 
the Goeritz group of a $2$-bridge  decomposition 
is always a finite group except for the case of the trivial 2-component link.

In Sections \ref{section_stabilized} and \ref{section_application}, 
we will discuss the Goeritz groups of bridge decompositions 
of links whose orders are infinite. 
%To explain the results there, let us quickly review the notion of mapping class group of 
%a surface with marked points. 
Let $\Sigma_{g, m}$ denote the orientable surface of genus $g$ with $m$ marked points, possibly $m=0$. 
By $\Sigma_g$ we mean $\Sigma_{g,0}$ for simplicity. 
%We will abbreviate $\Sigma_{g, 0}$ as $\Sigma_g$. 
The mapping class group $ \MCG (\Sigma_{g, m})$  is 
the group of isotopy classes of orientation-preserving 
self-homeomorphisms of $\Sigma_{g, m}$ which preserve the marked points setwise. 
Assume $3g-3+ m \ge 1$. 
By Nielsen-Thurston classification, 
elements  in $ \MCG (\Sigma_{g, m})$ fall into three types: 
periodic, reducible, pseudo-Anosov \cite{Thurston88,FarbMargalit12,FLP}. 
This classification can also be applied for 
elements of the Goeritz group $\mathcal{G}(L; S)$ of {an $n$-bridge} decomposition $(L; S)$ 
by regarding $\mathcal{G}(L; S)$ as a subgroup of $\MCG (S , S \cap L) = \MCG (\Sigma_{0, 2n})$.

Given a bridge decomposition $(L; S)$ of a link $L \subset S^3$,  
consider the bridge decomposition $(L; S_{(p,1)})$ obtained by the 
{\it $1$-fold stabilization} of $(L; S)$ at a point $p \in S \cap L$ 
(see Section \ref{subsection_Bridge-decompositions} for the 
definition of a stabilization).
This bridge decomposition has distance at most $1$ 
(see Lemma \ref{lem: distance stabilized}), and hence the order of its Goeritz group is infinite 
(except the case of the $2$-bridge decomposition of the trivial knot, 
which is the $1$-fold stabilization of the $1$-bridge decomposition of the 
trivial knot).  
Therefore, it is quite natural to ask whether it contains a pseudo-Anosov element. 
In Section \ref{section_stabilized} we give a complete answer to this question as in the following way. 
\begin{thm}
\label{introthm_pA elements in a stabilized bridge decomposition}
Let $(L; S)$ be an $n$-bridge decomposition of a link $L$ in $S^3$ 
with $n \geq 2$. 
Let $p$ be an arbitrary point in $S \cap L$. 
If $(L; S)$ is the $2$-bridge decomposition of the $2$-component trivial link, 
then $\mathcal{G} (L; S_{(p,1)})$ is an infinite group 
consisting only of reducible elements. 
Otherwise, the Goeritz group $\mathcal{G} (L; S_{(p,1)})$ contains a pseudo-Anosov element. 
\end{thm}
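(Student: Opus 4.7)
The plan is to handle the two cases of the theorem separately.

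For the exceptional case where $(L;S)$ is the $2$-bridge decomposition of the $2$-component trivial link, the stabilized decomposition $(L; S_{(p,1)})$ is a $3$-bridge decomposition whose two components of $L$ carry distinct bridge numbers ($1$ and $2$), so every element of $\mathcal{G}(L; S_{(p,1)})$ preserves each component of $L$ setwise. The unique (up to isotopy) separating $2$-sphere in $S^3$ between the components can be isotoped to meet $S_{(p,1)}$ in a single essential simple closed curve $c$ that separates the $4$ punctures of one component from the $2$ punctures of the other on $\Sigma_{0,6}$. Since the partition of punctures into components is preserved by every element of $\mathcal{G}(L; S_{(p,1)})$, the isotopy class $[c]$ is invariant under the group, making every element reducible or periodic. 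Infiniteness will then follow from Lemma~\ref{lem: distance stabilized} combined with Example~\ref{ex:distance and infinite order Goeritz groups}.

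For the main case the plan is a Thurston--Penner construction. Call a simple closed curve $\gamma \subset S_{(p,1)} \setminus L$ \emph{bi-disk-bounding} if $\gamma$ bounds a disk disjoint from $L$ in each of $B^+$ and $B^-$; equivalently, each of the two disks cut off by $\gamma$ on $S_{(p,1)}$ has puncture-set equal to a union of whole components of $L$. For any bi-disk-bounding $\gamma$ the Dehn twist $T_\gamma$ lies in $\mathcal{G}(L; S_{(p,1)})$. If a filling pair $\alpha, \beta$ of bi-disk-bounding curves can be produced in $\Sigma_{0, 2(n+1)}$, then some positive word $T_\alpha^{N} T_\beta^{-M}$ will give the required pseudo-Anosov.

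When $L$ has at least two components and $(L;S)$ is not the excluded decomposition, several distinct component-separating bi-disk-bounding curves are available and I would construct a filling pair among them by direct geometric inspection of the bridge arcs near $p$. When $L$ is a knot, no essential bi-disk-bounding curve exists; I would handle this sub-case by passing, via Theorem~\ref{introthm:HG(M;V)/i} and Theorem~\ref{introthm:hyperelliptic Goeritz group as an intersection}, to the hyperelliptic Goeritz group of the $1$-fold stabilized Heegaard splitting of the $2$-fold branched cover $M_L$. There many more curves on the Heegaard surface bound disks in both handlebodies, so a Thurston--Penner construction produces a pseudo-Anosov; choosing it equivariant under the hyperelliptic involution gives a hyperelliptic pseudo-Anosov that descends to the required element of $\mathcal{G}(L; S_{(p,1)})$.

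The main obstacle will be the knot sub-case of the main case: the Goeritz group then contains no non-trivial Dehn twist, so the construction on the punctured sphere is unavailable, and one must work upstairs in the $2$-fold branched cover with the additional requirement of equivariance under the hyperelliptic involution.
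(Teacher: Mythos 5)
Your proposal has genuine gaps in both cases, and the central one is the premise driving your "knot sub-case": it is \emph{not} true that the Goeritz group of a stabilized decomposition of a knot contains no non-trivial Dehn twist. The paper's whole construction rests on the observation (already used at the start of Section \ref{section_stabilized}) that if a curve $\alpha$ on the bridge sphere bounds a disk in $B^+$ meeting $L$ transversely in \emph{one} point and likewise in $B^-$, then $\tau_\alpha$ still extends to both sides and hence lies in $\mathcal{G}(L;S_{(p,1)})$; disks disjoint from $L$ are not needed. By restricting to your ``bi-disk-bounding'' curves you lose exactly the curves the stabilization creates. The paper's proof takes reference arcs $\beta,\gamma$ for the two bridge arcs at $p$ whose associated curves $\alpha_\beta\in\mathcal{D}_p^+$, $\alpha_\gamma\in\mathcal{D}_p^-$ fill $S_L$ (Lemma \ref{lem_filling pair}), and shows that after the $1$-fold stabilization these produce curves $\widehat{\alpha}_\beta,\widehat{\alpha}_\gamma$ filling $(S_{(p,1)})_L$, each bounding once-punctured disks on \emph{both} sides; Penner's criterion applied to $\tau_{\widehat{\alpha}_\beta}\circ\tau_{\widehat{\alpha}_\gamma}^{-1}$ then finishes the argument uniformly for knots and links. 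Note also that Lemma \ref{lem_filling pair} is not ``direct geometric inspection'': for $n\ge 3$ it requires a pseudo-Anosov element of $\MCG(B^+,B^+\cap L)$ and the Masur--Minsky unboundedness of its orbit in $\mathcal{C}(S_L)$ to push a curve of $\mathcal{D}_p^+$ far from a given curve of $\mathcal{D}_p^-$. Your proposed detour through the double branched cover is both unnecessary and unsubstantiated: you give no construction of a filling pair of equivariant disk-bounding curves upstairs, and you would still need to check that the resulting mapping class lies in $\mathcal{H}(\Sigma)$ before Theorem \ref{introthm:hyperelliptic Goeritz group as an intersection} lets you descend.

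The exceptional case also has a gap. From the fact that every element of $\mathcal{G}(O_2;S_{(p,1)})$ preserves the partition of the six punctures into a set of four and a set of two, you cannot conclude that the isotopy class of a particular separating curve $c$ is invariant: there are infinitely many isotopy classes of curves on $\Sigma_{0,6}$ inducing that partition, and the uniqueness of the separating sphere up to isotopy in $S^3\setminus L$ does not pin down the isotopy class of its intersection with the bridge sphere. (Pseudo-Anosov maps can certainly preserve a subset of the punctures.) The paper instead passes to the double branched cover $S^2\times S^1$ with its genus-$2$ Heegaard splitting and quotes the Cho--Koda theorem that $\mathcal{G}(S^2\times S^1;\Sigma)$ is infinite and consists only of reducible elements, which descends through Theorems \ref{introthm:HG(M;V)/i} and \ref{introthm:hyperelliptic Goeritz group as an intersection}. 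Your appeal to Lemma \ref{lem: distance stabilized} and Example \ref{ex:distance and infinite order Goeritz groups} for infiniteness is fine, but the ``only reducible elements'' half needs a real argument.
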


%Given a bridge decomposition $(L; S)$ of a link $L \subset S^3$,  
%consider a series 
%$${  (L; S_{(p,1)}), (L; S_{(p,2)}), \ldots , (L; S_{(p,k)}) , \ldots  }$$
%of bridge decompositions obtained by a {\it $k$-fold stabilization} $(L; S)$ 
%{ at a point $p \in S \cap L$} for 
%a positive integer $k$ (see Section \ref{subsection_Bridge-decompositions} for the detailed definition of a stabilization). 
%Stabilized bridge decompositions have distance at most $1$ 
%{(see Lemma \ref{lem: distance stabilized})}, and hence they always have 
%the Goeritz groups of infinite order { (except the case of the $2$-bridge decomposition of the trivial knot)}.  
%{ 
%Furthermore, we can show the following in Section \ref{section_stabilized}. 
%\begin{thm}
%\label{introthm_pA elements in a stabilized bridge decomposition}
%Let $(L; S)$ be an $n$-bridge decomposition of a link $L$ in $S^3$ with 
%$n \geq 2$. 
%Let $p$ be an arbitrary point in $S \cap L$. 
%If $(L; S)$ is not the $2$-bridge decomposition of the $2$-component trivial link, 
%the Goeritz group $\mathcal{G} (L; S_{(p,1)})$ contains a pseudo-Anosov element. 
%In the case where $(L; S)$ is the $2$-bridge decomposition of the $2$-component trivial link, 
%$\mathcal{G} (L; S_{(p,1)})$ is an infinite group 
%consisting only of reducible elements. 
%\end{thm}
%}

%({ Remark \ref{rem:reducible elements in the Goeritz group of a stabilized bridge decomposition}}).  

Consider the sequence 
$$(L; S_{(p,1)}), (L; S_{(p,2)}), \ldots , (L; S_{(p,k)}) , \ldots $$
obtained by a $k$-fold stabilization of a bridge decomposition $(L; S)$ 
at a point $p \in S \cap L$ for each positive integer $k$. 
By Theorem \ref{introthm_pA elements in a stabilized bridge decomposition}, 
the Goeritz groups of the bridge decompositions in this sequence 
always have pseudo-Anosov elements. 
We are interested in how big those Goeritz groups can be from the 
view point of dynamical properties of their elements.  
In the final section of the paper we actually discuss 
the asymptotic behavior of the minimal pseudo-Anosov dilatations in the Goeritz groups with respect to 
stabilizations. 
To each pseudo-Anosov element $\phi \in \MCG (\Sigma_{g,m}) $, 
there is an associated dilatation (stretch factor) $\lambda(\phi)>1$. 
%~(see \cite{FarbMargalit12} for example). 
The logarithm 
$\log \lambda(\phi)$ of the dilatation is called the {\it entropy} of $\phi$. 

Fix a surface $\Sigma_{g, m}$ and consider 
the set of entropies 
$$\{\log \lambda(\phi)\ |\ \phi \in  \MCG (\Sigma_{g, m})\  \mbox{is pseudo-Anosov}\}.$$ 
This is a closed, discrete subset of ${\Bbb R}$ due to Ivanov \cite{Ivanov88}. 
For any subgroup $G \subset  \MCG (\Sigma_{g, m})$ containing a pseudo-Anosov element,  
let $\ell(G)$ denote the minimum of entropies 
%$\log \lambda(\phi)$ 
over all pseudo-Anosov elements in $G$. 
Note that $\ell(G) \ge \ell( \MCG (\Sigma_{g, m}))$. 
Penner~\cite{Penner91} proved that 
$\ell( \MCG (\Sigma_g)) $ is comparable to $1/g$, and 
Hironaka-Kin \cite{HironakaKin06} proved that $\ell ( \MCG (\Sigma_{0, m}) )$ is 
comparable to $\frac{1}{m}$. 
Here, for real valued functions $f, h: X \rightarrow {\Bbb R}$ on a subset $X \subset {\Bbb N}$,  
we say that $f$ {\it is comparable to} $h$, and write  $f \asymp h$,  
if there exists a constant $c>0$  such that $h(x)/c \le f(x) \le c h(x)$ for all $x \in X$.  
For the genus-$g$ handlebody group $\MCG(V_g) \subset \MCG (\Sigma_g)$, 
Hironaka \cite{Hironaka14} proved that $\ell (\MCG (V_g))$ is also comparable to $\frac{1}{g}. $

% i.e., there exists a constant $C>0$  such that 
% for every $g > 1$, we have 
% $$\frac{1}{Cg} \le \ell(\MCG (\Sigma_g)) \le \frac{C}{g}.$$
% \marginal{say the definition of $\asymp$ 
% \\
% YK: there exists a constant $C > 0$ and $g_0 \in \mathbb{N}$ such that
%for every $g > g_0$ we have…}
%% i.e., there exists a constant $C$ independent on $g$ such that 
%% $$\frac{1}{Cg} \le L(\MCG (\Sigma_g)) \le \frac{C}{g}.$$
% In this case, we write 
% $\ell(\MCG (\Sigma_g)) \asymp \frac{1}{g}$. 

% As an application of Theorem~\ref{thm_bridge_char}, we have the following results. 

%Since the Goeritz group $\mathcal{G} (L; S)$ of an $n$-bride decomposition $(L; S)$ of $L \subset S^3$ 
%can be regarded as a subgroup of $\MCG (S , S \cap L) = \MCG (\Sigma_{0, 2n})$, 
%we can consider the dilations of pseudo-Anosov elements of $\mathcal{G} (L; S)$ if any. 

In Section \ref{section_application}, we show the following.

\begin{thm}
\label{introthm_asymptitic behavior for the trivial knot}
Let $(O; n)$ be the $n$-bridge decomposition of the trivial knot $O \subset S^3$. 
Then we have 
$$\ell (\mathcal{G}(O; n))  \asymp \dfrac{1}{n}.$$
\end{thm}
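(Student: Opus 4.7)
The statement splits into a lower bound and an upper bound.

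\emph{Lower bound.} This is immediate from the embedding $\mathcal{G}(O; n) < \MCG(S, S \cap O) = \MCG(\Sigma_{0, 2n})$: any pseudo-Anosov element of $\mathcal{G}(O; n)$ is pseudo-Anosov in $\MCG(\Sigma_{0, 2n})$ with the same dilatation, so $\ell(\mathcal{G}(O; n)) \geq \ell(\MCG(\Sigma_{0, 2n}))$. By the theorem of Hironaka-Kin \cite{HironakaKin06}, $\ell(\MCG(\Sigma_{0, 2n})) \asymp 1/(2n)$, yielding $\ell(\mathcal{G}(O; n)) \geq c/n$ for a uniform constant $c > 0$.

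\emph{Upper bound.} Here one must construct, for each $n$, a pseudo-Anosov $\phi_n \in \mathcal{G}(O; n)$ with $\log \lambda(\phi_n) \leq C/n$. The plan is to work through the hyperelliptic correspondence provided by Theorems~A and~B. The $2$-fold branched cover $q: M_O \to S^3$ of $S^3$ over the unknot is again $S^3$, and the $n$-bridge decomposition of $O$ lifts to the standard genus-$(n-1)$ Heegaard splitting of $S^3$, with $\Sigma := q^{-1}(S) \cong \Sigma_{n-1}$ and $V^{\pm} := q^{-1}(B^{\pm})$ a pair of genus-$(n-1)$ handlebodies. Combining Theorems~A and B gives
\[
\mathcal{G}(O; n) \cong \bigl(\mathcal{G}(S^3; \Sigma_{n-1}) \cap \mathcal{H}(\Sigma_{n-1})\bigr) / \langle [T] \rangle.
\]
Since topological entropy is preserved under finite branched covers, it suffices to produce, for each $n$, a hyperelliptic pseudo-Anosov in $\mathcal{G}(S^3; \Sigma_{n-1})$ whose entropy is $O(1/n)$. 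To do so, we plan to use a Thurston- or Hironaka-Kin-type construction: take two multicurves on $\Sigma_{n-1}$ that fill the surface, that bound disks in both handlebodies of the standard splitting (so that the resulting map lies in the Goeritz group on both sides), and that are invariant under the hyperelliptic involution $T|_{\Sigma}$; the cyclic $\mathbb{Z}/n\mathbb{Z}$-symmetry of the standard genus-$(n-1)$ splitting then allows a fixed local construction to be propagated around the surface, producing a map whose Perron-Frobenius dilatation tends to $1$ at the rate $1/n$.

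\emph{Main obstacle.} The difficulty lies entirely in the upper bound. Familiar small-entropy constructions in $\MCG(\Sigma_{0, 2n})$ (such as the Hironaka-Kin braid) do not preserve the bridge decomposition, so one cannot simply quote them. The heart of the argument is therefore to design a construction that is intrinsic to the Goeritz group---respecting both trivial tangles simultaneously, or equivalently, after passing to the branched cover, both handlebodies and the hyperelliptic involution---and then to verify that the resulting map is pseudo-Anosov with the desired dilatation bound. The latter is typically accomplished by exhibiting an invariant train track and estimating the Perron-Frobenius eigenvalue of the associated transition matrix, where the symmetry of the construction forces this eigenvalue to behave asymptotically like $1 + O(1/n)$.
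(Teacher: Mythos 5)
Your lower bound is fine and matches the paper's (which quotes Penner's bound $\log\lambda(\phi)\ge \frac{\log 2}{4m-12}$ for pseudo-Anosov $\phi\in\MCG(\Sigma_{0,m})$ rather than the Hironaka--Kin asymptotic, but the content is the same). The upper bound, however, is where all the work lies, and your proposal does not carry it out: it is a plan whose central step --- actually exhibiting, for every $n$, a pseudo-Anosov element of $\mathcal{G}(O;n)$ with entropy $O(1/n)$ --- is left as a description of what one would like to build. Concretely, for your symmetric Penner-type construction upstairs you would need to produce reducing multicurves for the standard genus-$(n-1)$ splitting of $S^3$ (curves bounding disks in \emph{both} handlebodies) that fill $\Sigma_{n-1}$, are invariant under the hyperelliptic involution, and are compatible with a finite-order element of the Goeritz group so that the composition with that rotation has dilatation $1+O(1/n)$; none of these existence claims is verified, and the last one is delicate, since a bare product of twists from a Penner pair has dilatation bounded away from $1$ (already $\tau_a\tau_b^{-1}$ with $i(a,b)\ge 1$ has $\lambda\ge(3+\sqrt5)/2$), so the whole $1/n$ decay must come from the unconstructed symmetry. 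As it stands the proof is incomplete.

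For comparison, the paper never passes to the branched cover for this theorem; that direction is reversed (Theorem D is proved first and Corollary F about $\mathcal{G}(S^3;g)$ is deduced from it). Instead it works downstairs with the identification $\mathcal{G}(O;n)=\Gamma(\SW_{2n}(\mathcal{A},\mathcal{B}))$ from Theorem \ref{thm_bridge_char}, writes down explicit planar braids $X,Y,Z$ whose sphericalizations $x,y,z$ lie in the relevant wicket groups (Lemma \ref{lem_xyz}), verifies by a direct conjugation computation that $\alpha=XZ$ is pseudo-Anosov (reducing to the $3$-braid $\sigma_1\sigma_2^{-2}$), and then invokes the machinery of \cite{HiroseKin18}: the sequences $(\alpha\Delta^{2n})_1=XZ^{2n+1}$ and $(\alpha^2\Delta^{2n})_1=XZXZ^{2n+1}$ have small normalized entropy because they all share the same hyperbolic mapping torus, and removing the last strand yields $xy^{2n+1}, xyxy^{2n+1}\in\SW_{4n+6}(\mathcal{A},\mathcal{B}), \SW_{4n+8}(\mathcal{A},\mathcal{B})$ with the same entropy. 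If you want to salvage your approach, you would need to supply an explicit symmetric filling system of reducing curves and a genuine dilatation estimate; otherwise the braid-theoretic route is the one that closes the argument.
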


We note that the sequence $(O; 2) , (O; 3) , \ldots$ we consider in the above theorem is nothing but 
that of finite fold stabilizations of the $1$-bridge decomposition $(O; 1)$ 
of the trivial knot $O$ (at any point).

\begin{thm}
\label{introthm_asymptotic behavior for the Hopf link}
Let $(H; S_{(p, n-2)})$ be the $n$-bridge decomposition of the Hopf link 
$H \subset S^3$ obtained from 
the $2$-bridge decomposition $(H; S)$ by the $(n-2)$-fold stabilization at a point 
$p \in S \cap H$. 
Then we have 
$$\ell(\mathcal{G}(H; S_{(p, n-2)})) \asymp \dfrac{1}{n}.$$
\end{thm}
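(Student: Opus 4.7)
The plan is to prove both bounds $\ell(\mathcal{G}(H; S_{(p, n-2)})) \geq c_1/n$ and $\ell(\mathcal{G}(H; S_{(p, n-2)})) \leq c_2/n$ separately, for universal positive constants $c_1, c_2 > 0$.

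For the lower bound, observe that the stabilized bridge sphere $S_{(p, n-2)}$ meets the Hopf link $H$ in exactly $2n$ points, so $\mathcal{G}(H; S_{(p, n-2)})$ sits as a subgroup of $\MCG(\Sigma_{0, 2n})$. The Hironaka--Kin estimate $\ell(\MCG(\Sigma_{0, m})) \asymp 1/m$ recalled in the introduction then immediately gives
\[
\ell(\mathcal{G}(H; S_{(p, n-2)})) \geq \ell(\MCG(\Sigma_{0, 2n})) \geq c_1/n.
\]

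For the upper bound, I would construct explicit pseudo-Anosov elements of small entropy by passing through the double branched cover. The Hopf link is the $2$-bridge link of slope $2/1$, so the double cover of $S^3$ branched over $H$ is $\mathbb{RP}^3$, and the stabilized bridge sphere $S_{(p, n-2)}$ lifts to a genus-$(n-1)$ Heegaard surface. By Theorem~\ref{introthm:HG(M;V)/i}, $\mathcal{G}(H; S_{(p, n-2)}) \cong \mathcal{HG}_T(\mathbb{RP}^3; \Sigma_{n-1}) / \langle [T] \rangle$, and since the invariant foliations of a hyperelliptic pseudo-Anosov descend to the quotient with the same dilatation, it suffices to build a pseudo-Anosov of entropy $O(1/n)$ in $\mathcal{HG}_T(\mathbb{RP}^3; \Sigma_{n-1})$. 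Mimicking the strategy expected in the proof of Theorem~\ref{introthm_asymptitic behavior for the trivial knot}, I would choose a Penner-admissible multicurve configuration on $\Sigma_{n-1}$, spread out along the ``long'' axis provided by the $(n-2)$-fold stabilization, such that each curve bounds a disk in either $V^+$ or $V^-$ and the whole configuration is invariant under the hyperelliptic involution $T|_{\Sigma_{n-1}}$. The Penner--Thurston construction applied to such a system produces a pseudo-Anosov whose dilatation is bounded by the spectral radius of an associated non-negative integer matrix, which decays like $1 + O(1/n)$ by standard estimates of Hironaka type.

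The hard part will be producing a curve configuration that simultaneously satisfies all three conditions---Penner-admissibility, membership in both handlebody groups of the $\mathbb{RP}^3$ Heegaard splitting, and hyperelliptic symmetry---on $\Sigma_{n-1}$. This is more delicate than the trivial-knot case of Theorem~\ref{introthm_asymptitic behavior for the trivial knot} because the underlying genus-$1$ Heegaard splitting of $\mathbb{RP}^3$ arises from a non-trivial gluing of two solid tori, so the meridian disk systems of $V^+$ and $V^-$ are not in standard product position. One has to verify that after $(n-2)$ stabilizations there is enough room in the ``stabilization handles'' to carry a Penner configuration that genuinely survives into the full intersection $\MCG(V^+) \cap \MCG(V^-) \cap \mathcal{H}(\Sigma_{n-1})$, rather than merely into one handlebody group or the ambient hyperelliptic mapping class group alone.
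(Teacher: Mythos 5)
Your lower bound is fine and agrees with what the paper uses (Penner's bound $\log\lambda(\phi)\ge\frac{\log 2}{4m-12}$ for pseudo-Anosov $\phi\in\MCG(\Sigma_{0,m})$ is quoted in Section 5; your appeal to Hironaka--Kin is equivalent). The upper bound, however, is where all the content of the theorem lives, and your proposal stops exactly at the point where the proof has to begin: you write that ``the hard part will be producing a curve configuration that simultaneously satisfies all three conditions'' and then do not produce one. Two concrete problems. First, a Penner-admissible configuration on $\Sigma_{n-1}$ does not automatically yield dilatation $1+O(1/n)$; for a generic choice of configuration and twisting word the entropy is bounded below by a constant independent of $n$, and obtaining the matching $O(1/n)$ upper bound is precisely the delicate construction carried out by Penner and by Hironaka--Kin. ``Standard estimates of Hironaka type'' is not a substitute for exhibiting the curves and the word. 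Second, even granting such a pseudo-Anosov on $\Sigma_{n-1}$, you must verify that it lies in $\MCG(V^+)\cap\MCG(V^-)\cap\mathcal{H}(\Sigma_{n-1})$, i.e.\ that it extends over both handlebodies of the genus-$(n-1)$ splitting of $\mathbb{RP}^3$ \emph{and} is symmetric; none of that verification is sketched, and as you yourself note the nontrivial gluing of the two solid tori makes this genuinely harder than in the trivial-knot case.

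The paper's route is different and avoids the branched cover entirely for this theorem; indeed the cover is used in the \emph{opposite} direction, to deduce Corollary~\ref{introcor_rp3-entropy} about $\mathcal{G}(\mathbb{RP}^3;g)$ from this theorem. Via Theorem~\ref{thm_bridge_char} one has $\mathcal{G}(H;S_{(p,n-2)})=\Gamma\bigl(\SW_{2n}(\mathcal{A},\mathcal{C})\bigr)$, so the problem becomes exhibiting explicit spherical braids preserving both tangles $\mathcal{A}$ and $\mathcal{C}$. The proof takes the concrete planar braids $X,Z\in B_5$ of Section 5, shows $\alpha=XZ$ is pseudo-Anosov and $5$-increasing, and uses the small-normalized-entropy sequences $(\alpha\Delta^{2n})_1=XZ^{2n+1}$ and $(\alpha^2\Delta^{2n})_1=XZXZ^{2n+1}$ of \cite{HiroseKin18}; their images $xz^{2n+1}\in\SW_{4n+8}(\mathcal{A},\mathcal{C})$ and $xzxz^{2n+1}\in\SW_{4n+10}(\mathcal{A},\mathcal{C})$ under $s_+$ are pseudo-Anosov with the same entropy $\asymp 1/n$, and membership in the wicket group on $\mathcal{A}$ and $\mathcal{C}$ is checked directly in Lemma~\ref{lem_xyz}. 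If you want to salvage your approach, you would essentially have to reconstruct these elements (or their lifts to $\Sigma_{n-1}$) anyway, so the braid-theoretic formulation is not an optional convenience but the mechanism that makes the three simultaneous constraints checkable.
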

The proofs of Theorems 
\ref{introthm_asymptitic behavior for the trivial knot}
and \ref{introthm_asymptotic behavior for the Hopf link}
are based on the braid-theoretic 
description (Theorem \ref{thm_bridge_char}) of the Goeritz groups of bridge decompositions 
derived from the earlier-mentioned identification 
$\mathcal{G} (L; S) =  \MCG  (B^+, B^+ \cap L) \cap  \MCG  (B^-, B^- \cap L)$ for a bridge decomposition 
$(L; S)$. 
Theorems 
\ref{introthm_asymptitic behavior for the trivial knot} 
and 
\ref{introthm_asymptotic behavior for the Hopf link}, 
together with 
Theorems \ref{introthm:HG(M;V)/i} and 
\ref{introthm:hyperelliptic Goeritz group as an intersection}, 
allow us to have the following intriguing application to 
the Goeritz groups of Heegaard splittings. 

\begin{cor}
\label{introcor_sphere-entropy}
Let $(S^3; g)$ be the genus-$g$ Heegaard splitting of $S^3$ for $g \geq 0$. 
 Then we have 
$$\ell(\mathcal{G}(S^{3}; g)) \asymp \dfrac{1}{g}.$$
\end{cor}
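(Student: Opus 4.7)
The plan is to derive the corollary from Theorem \ref{introthm_asymptitic behavior for the trivial knot} together with the two identification results in Theorems \ref{introthm:HG(M;V)/i} and \ref{introthm:hyperelliptic Goeritz group as an intersection}, combined with Penner's asymptotic lower bound for the minimal pseudo-Anosov entropy on the full mapping class group of a closed surface.

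For the upper bound, I would exploit the well-known fact that the $2$-fold cover of $S^3$ branched over the trivial knot $O$ in $n$-bridge position is canonically identified with $S^3$ equipped with a genus-$(n-1)$ Heegaard splitting: each trivial $n$-tangle $(B^{\pm}, B^{\pm} \cap O)$ lifts to a handlebody of genus $n-1$, and the Heegaard surface is the lift of the bridge sphere. Setting $g = n-1$ and letting $T$ denote the covering involution on $\Sigma = \Sigma_g$, Theorems \ref{introthm:HG(M;V)/i} and \ref{introthm:hyperelliptic Goeritz group as an intersection} together give an inclusion
\[
\mathcal{HG}_T(S^3;\Sigma) \cong \mathcal{G}(S^3;\Sigma) \cap \mathcal{H}(\Sigma) \leq \mathcal{G}(S^3;g)
\]
and a surjection $\mathcal{HG}_T(S^3;\Sigma) \to \mathcal{G}(O;n)$ with kernel $\langle [T|_{\Sigma}] \rangle$. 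By the Birman--Hilden principle, a pseudo-Anosov representative of any class in $\mathcal{G}(O;n)$ lifts along the branched cover to a pseudo-Anosov representative in $\mathcal{HG}_T(S^3;\Sigma)$ with the same dilatation, and this lifted class is still pseudo-Anosov when regarded as an element of the larger group $\mathcal{G}(S^3;g)$. Hence
\[
\ell(\mathcal{G}(S^3;g)) \leq \ell(\mathcal{HG}_T(S^3;\Sigma)) = \ell(\mathcal{G}(O;n)),
\]
and Theorem \ref{introthm_asymptitic behavior for the trivial knot} yields $\ell(\mathcal{G}(S^3;g)) \leq C/(g+1)$ for some constant $C>0$, which is $\asymp 1/g$.

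For the matching lower bound, since $\mathcal{G}(S^3;g)$ sits as a subgroup of $\MCG(\Sigma_g)$ and the Nielsen--Thurston type of a mapping class is intrinsic, every pseudo-Anosov element of $\mathcal{G}(S^3;g)$ is pseudo-Anosov in $\MCG(\Sigma_g)$ with the same dilatation; therefore $\ell(\mathcal{G}(S^3;g)) \geq \ell(\MCG(\Sigma_g))$. Penner's theorem gives $\ell(\MCG(\Sigma_g)) \asymp 1/g$, and combining this with the upper bound yields the asserted comparability. The main delicate point is the Birman--Hilden dilatation-matching step: one must verify that a pseudo-Anosov class in the quotient $\mathcal{HG}_T(S^3;\Sigma)/\langle [T|_{\Sigma}] \rangle \cong \mathcal{G}(O;n)$ actually lifts to a pseudo-Anosov class in $\mathcal{HG}_T(S^3;\Sigma)$ realizing the same stretch factor. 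This is standard and follows by pulling back the invariant measured foliations of a pseudo-Anosov representative along the $2$-fold branched cover $\Sigma \to S$; the resulting foliations are preserved by the lift and are stretched by the same factor, since the covering map is a local isometry away from the branch points.
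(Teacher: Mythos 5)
Your proposal is correct and follows essentially the same route as the paper: the upper bound via the chain $\mathcal{HG}_T(S^3;\Sigma) \subset \mathcal{G}(S^3;g)$ together with the Birman--Hilden dilatation-preserving lift from $\mathcal{G}(O;n) \cong \mathcal{HG}_T(S^3;\Sigma)/\langle [T]\rangle$ and Theorem \ref{introthm_asymptitic behavior for the trivial knot}, and the lower bound from the inclusion $\mathcal{G}(S^3;g) \subset \MCG(\Sigma_g)$ plus Penner's bound $\ell(\MCG(\Sigma_g)) \asymp 1/g$. The paper phrases the lifting step as the statement that every element of $\Pi^{-1}(\phi)$ is pseudo-Anosov with the same entropy as $\phi$, which is exactly the foliation-pullback argument you flag as the delicate point.
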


\begin{cor}
\label{introcor_rp3-entropy}
Let $(\mathbb{RP}^3 ; g)$ be the genus-$g$ Heegaard splitting of the real projective space 
$\mathbb{RP}^3$ for $g \geq 1$. 
Then we have 
 $$\ell(\mathcal{G}(\mathbb{RP}^{3}; g)) \asymp \dfrac{1}{g}.$$ 
\end{cor}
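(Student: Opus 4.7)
My plan is to transfer Theorem~\ref{introthm_asymptotic behavior for the Hopf link} along the Birman--Hilden correspondence provided by Theorems~\ref{introthm:HG(M;V)/i} and~\ref{introthm:hyperelliptic Goeritz group as an intersection}, in direct analogy with the argument for Corollary~\ref{introcor_sphere-entropy}. The geometric input I would use is the classical identification of $\mathbb{RP}^3$ as the $2$-fold cover $q \colon \mathbb{RP}^3 \to S^3$ branched along the Hopf link $H \subset S^3$; under this covering, an $n$-bridge decomposition $(H; S_{(p, n-2)})$ lifts to a Heegaard splitting of $\mathbb{RP}^3$ whose Heegaard surface has genus $g = n-1$, since each trivial $n$-tangle in a $3$-ball lifts to a genus-$(n-1)$ handlebody and a bridge stabilization downstairs corresponds to a Heegaard stabilization upstairs.

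For the lower bound, I would use that $\mathcal{G}(\mathbb{RP}^3; g) \subset \MCG(\Sigma_g)$ together with Penner's comparability $\ell(\MCG(\Sigma_g)) \asymp 1/g$ to obtain $\ell(\mathcal{G}(\mathbb{RP}^3; g)) \geq c/g$ for a universal constant $c > 0$, whenever the group contains a pseudo-Anosov at all.

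For the upper bound, set $n = g + 1$ and invoke Theorem~\ref{introthm_asymptotic behavior for the Hopf link} to pick a pseudo-Anosov element $\phi \in \mathcal{G}(H; S_{(p, n-2)})$ with $\log \lambda(\phi) \leq C/n$. Regarding $\phi$ as an element of $\MCG(\Sigma_{0, 2n})$, I would lift it via Theorem~\ref{introthm:HG(M;V)/i} to a class in $\mathcal{HG}_T(\mathbb{RP}^3; \Sigma_g)$, and then apply Theorem~\ref{introthm:hyperelliptic Goeritz group as an intersection} to place this class inside $\mathcal{G}(\mathbb{RP}^3; \Sigma_g)$. A standard branched-cover argument on the invariant measured foliations of $\phi$ produces a pseudo-Anosov representative $\tilde{\phi}$ of the lifted class with $\lambda(\tilde{\phi}) = \lambda(\phi)$, yielding $\ell(\mathcal{G}(\mathbb{RP}^3; g)) \leq C/(g+1) \asymp 1/g$.

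The main obstacle I expect is the pseudo-Anosov lifting step: one must verify that a pseudo-Anosov class in $\mathcal{G}(H; S_{(p, n-2)})$ lifts, via the quotient map of Theorem~\ref{introthm:HG(M;V)/i}, to a class in $\mathcal{HG}_T$ represented by a genuine pseudo-Anosov of the same dilatation. This is standard for finite branched covers, since the invariant measured foliations pull back along $q$ and the lift scales them by the same factor; the only nontrivial check is that, after composing with the deck transformation $T$ if necessary, one obtains a fiber-preserving representative, which is precisely what Theorem~\ref{introthm:HG(M;V)/i} guarantees. The remainder of the argument is bookkeeping among the identifications established earlier in the paper.
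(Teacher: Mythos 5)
Your proposal is correct and follows essentially the same route as the paper: the lower bound via $\mathcal{G}(\mathbb{RP}^{3}; g) \subset \MCG(\Sigma_g)$ and Penner's bound, and the upper bound by lifting a small-dilatation pseudo-Anosov element of $\mathcal{G}(H; S_{(p,n-2)})$ (with $n=g+1$) through the hyperelliptic branched covering into $\mathcal{HG}_T(\mathbb{RP}^3;\Sigma) \subset \mathcal{G}(\mathbb{RP}^3;g)$, using that every element of $\Pi^{-1}(\phi)$ is pseudo-Anosov with the same entropy as $\phi$. The only cosmetic difference is that the deck transformation plays no real role in the lifting step—both lifts of a pseudo-Anosov class are pseudo-Anosov of the same dilatation—but this does not affect the argument.
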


%$\mathcal{G}(S^{3}; g)$ and $\mathcal{G}(\mathbb{R}P^{3}; g)$

\vspace{1em}

Throughout the paper, we will work in the piecewise linear category. 
%Any surfaces in a 3-manifold are always assumed to be properly embedded, and their intersection is transverse and minimal up to isotopy. 
%For convenience, we will not distinguish 
%surfaces, compression bodies, e.t.c.  
%from their isotopy classes in their notation.
For a subspace $Y$ of a space $X$,  
$N(Y; X)$ denotes a regular neighborhood of $Y$ in $X$, 
$\Cl(Y; X)$ the closure of $Y$ in $X$,  and 
$\Int (Y)$ the interior of $Y$. 
When $X$ is a metric space, 
$N_\varepsilon (Y; X)$ denotes the closed $\varepsilon$-neighborhood of $Y$. 
If there is no ambiguity about the ambient space in question, 
the $X$ is suppressed from the notation. 
The number of components of $X$ is denoted by $\# X$. 
%Let $M$ be a 3-manifold, and let $L \subset M$ be a submanifold, or a graph. 
%When $L$ is 1 or 2-dimensional, we write 
%$E(L) = \Cl ( M \setminus N (L))$. 
%When $L$ is of 3-dimension, we write 
%$E(L) = \Cl ( M \setminus L)$. 

\section{Preliminaries} 
\label{section_Preliminaries}

Let $X_1, \ldots, X_n$ be possibly empty subspaces 
of an orientable manifold $M$. 
Let $\Homeo_+ (M, X_1, \ldots, X_n)$ denote 
the group of orientation-preserving self-homeomorphisms of $M$ 
that map $X_i$ onto $X_i$ for each $i=1 , \ldots , n$. 
% and which {are} {the} identity on $Y$. 
The {\it mapping class group}, denoted by $\MCG  (M , X_1, \ldots, X_n)$,  
is defined by 
$$ \MCG  (M , X_1,  \ldots, X_n) = \pi_0 (\Homeo_+ (M, X_1, \ldots, X_n)). $$
Here, we do not require that the maps and isotopies fix the points 
in $\partial M$. 
%When $Y = \emptyset$, we drop $\rel~Y$ in the above notations. 
For a compact orientable surface $\Sigma$ with marked points, 
by $\MCG (\Sigma)$ we mean $\MCG  (\Sigma, \{ p_1, \ldots, p_m \})$, where 
$\{ p_1, \ldots, p_m \}$ is the set of marked points of $\Sigma$.  
We apply elements of mapping class groups from right to left, i.e., 
the product  $fg$ means that $g$ is applied first.

\subsection{Heegaard splittings}
\label{subsection_Heegaard-splittings}

A {\it handlebody} of genus $g$ is an oriented 3-manifold 
obtained from a 3-ball by attaching $g$ copies of a 1-handle. 
Every closed orientable 3-manifold $M$ can be obtained by gluing together two handlebodies 
$V^+$ and $V^-$ of the same genus $g$ for some $g \ge 0$, that is, 
$M$ can be represented as $M = V^+ \cup V^-$ and 
$V^+ \cap V^- = \partial V^+ = \partial V^- = \Sigma \cong \Sigma_g$. 
We denote such a decomposition by $(M; \Sigma)$ or $V^+ \cup_{\Sigma} V^-$, 
and we call it a {\it genus-$g$ Heegaard splitting}  of  $M$. 
The surface $ \Sigma$ is called the {\it Heegaard surface} of the splitting. 
We say that two Heegaard splittings 
$(M; \Sigma)$ and $(M; \Sigma')$ of $M$ are {\it equivalent} 
if the Heegaard surfaces $\Sigma$ and $\Sigma'$ are isotopic in $M$. 

We recall the notion of stabilization 
for a  Heegaard splitting $V^+ \cup_{\Sigma} V^-$ of $M$. 
Take a properly embedded arc $\gamma$ in $V^+$ which is parallel to $\Sigma$ (Figure \ref{fig_Hstab}). 
%Let $N(\gamma)$ be a closed regular neighborhood of $\gamma$. 
We denote the union $V^- \cup N(\gamma)$ by $\widehat{V}^-$, 
the closure of $V^+ \setminus N(\gamma)$ by $\widehat{V}^+$, and 
their common boundary $\partial \widehat{V}^+= \partial \widehat{V}^-$ by $\widehat{\Sigma}$. 
Then $ \widehat{V}^+ \cup_{\widehat{\Sigma}}  \widehat{V}^-$ 
is again a Heegaard splitting of $M$, 
where the genus of $\widehat{\Sigma}$ is one greater than that of $\Sigma$. 
Note that $\widehat{\Sigma}$ does not depend on $\gamma$. 
We say that $\widehat{\Sigma}$ {\it is obtained from} $\Sigma$ {\it by a stabilization}.  

\begin{center}
\begin{figure}[t]
\includegraphics[height=1.8cm]{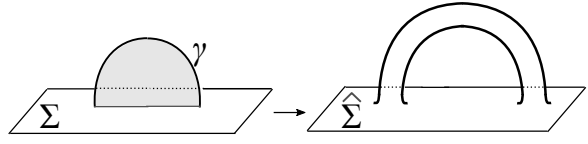}
\caption{
A stabilization for a Heegaard surface.}
\label{fig_Hstab}
\end{figure}
\end{center}

By Waldhausen \cite{Waldhausen68}, the 3-sphere $S^3$ admits 
a unique genus-$g$ Heegaard splitting up to equivalence for each $g \geq 0$. 
Similarly, by Bonahon-Otal \cite{BonahonOtal83}, 
a lens space admits a unique 
genus-$g$ Heegaard splitting up to equivalence for each $g \geq 1$.

\subsection{Goeritz groups of Heegaard splittings}
\label{subsection_GH}

Let $V= V_g$ be a handlebody of genus $g$. 
We call $\mathrm{MCG}  (V)$ a {\it handlebody group}. 
Since the map 
%There is an injective homomorphism 
$$\MCG  (V) \rightarrow \MCG  (\partial V)$$ 
sending $[f] \in \MCG  (V)$ to $[f|_{\partial V}] \in \MCG  (\partial V)$ is injective, 
we regard $\MCG  (V) $ as a subgroup of $\MCG  (\partial V)$. 

Suppose that $M$ admits a genus-$g$ Heegaard splitting 
$(M; \Sigma) = V^+ \cup_{\Sigma} V^- $. 
We equip the common boundary $\partial V^+ = \partial V^-(= \Sigma)$ with 
the orientation induced  by that of $V^-$. 
The {\it Goeritz group}, denoted by $\mathcal{G}(M; \Sigma)$ or $\mathcal{G}(V^+ \cup_{\Sigma} V^-)$, 
of the Heegaard splitting is defined by 
$$ \mathcal{G}(M; \Sigma) = \MCG  (M, V^+). $$
We note that $\MCG  (M, V^+) = \MCG  (M, V^-)$. 
We can regard $\mathcal{G}(M; \Sigma) $ as a subgroup of both $\MCG  (V^+)$ and $\MCG  (V^-)$. 
Further, regarding $\MCG  (V^+)$ and $\MCG  (V^-)$ as subgroups of $\MCG  (\Sigma)$, 
the group $\mathcal{G}(M; \Sigma)$ is nothing but the intersection 
$\MCG  (V^+) \cap \MCG  (V^-)$. 
%We regard $\mathcal{G}(M; \Sigma) $ as a subgroup of the handlebody group  of genus $g$.  

When $(M; \Sigma)$ is a unique genus-$g$ Heegaard splitting of $M$ up to equivalence, 
we simply call $\mathcal{G}(M; \Sigma) $ the {\it genus-$g$ Goeritz group} of $M$, 
and we denote it by $\mathcal{G}(M; g)$.

\subsection{Bridge decompositions}
\label{subsection_Bridge-decompositions}

%We introduce tangles in a $3$-ball  $B$. 
Let $\mathcal{T}=  T_{1} \cup \cdots \cup T_{n}$ be $n$ disjoint arcs properly embedded in 
a 3-ball $B$. 
We call $\mathcal{T}$ an {\it $n$-tangle} or simply a  {\it tangle}. 
The endpoints of $\mathcal{T}$, denoted by $\partial \mathcal{T}$, mean the set 
$\partial T_1 \cup \cdots \cup \partial T_n$ 
of $2n$ points in $\partial B$. 

Suppose that $\mathcal{T}$ and $\mathcal{T}'$ are $n$-tangles in $B$ 
such that they share the same endpoints,  that is, $\partial \mathcal{T}= \partial \mathcal{T}'$. 
We say that $\mathcal{T}$ and $\mathcal{T}'$  are {\it equivalent}  
if there exists an orientation-preserving homeomorphism $f: B \rightarrow B$ 
sending $\mathcal{T}$ to $\mathcal{T}'$ with  $f|_{\partial B}=  \id_{\partial B}$.  
In this case, we write $\mathcal{T}= \mathcal{T}'$. 
 
In what follows, when we consider an $n$-tangle in the 3-ball $B$, 
we always adopt the following convention. 
\begin{itemize}
\item
We identify the boundary $\partial B$ of the $3$-ball $B$ with $S^2$, and 
we  implicitly fix an oriented great circle $C$ of $\partial B$. 
\item
We fix $2n$ points labeled $ 1, 2, \ldots, 2n $ on $C$, where the order of the labels is 
compatible with the prefixed orientation of $C$.  
\item
The endpoints $\partial \mathcal{T}$ of a given $n$-tangle $\mathcal{T}$ is exactly 
the set $\{ 1, 2, \ldots, 2n \}$. 
\item
When we show a figure of a given $n$-tangle $\mathcal{T}$, 
we draw $C$ as a horizontal line 
in such a way that the labeled points $ 1, 2, \ldots, 2n $ are ordered 
from left to right as in Figure \ref{fig_BandC}. 
The sphere $S := \partial B$ near $C$ is perpendicular to the paper plane in such a figure.

\end{itemize}
This convention will be particularly important in the arguments in Section \ref{subsection_wicket-groups}.

%Take $2n$ points labeled $1, 2, \ldots , 2n$ in $\partial B$. 
Let $\mathcal{A}= \mathcal{A}_n $ be the $n$-tangle in $B$  
%with $\partial \mathcal{A}= \{1,2,  \ldots  , 2n\}$ 
as in Figure~\ref{fig_tangleA}.
We say that an $n$-tangle in $B$ is {\it standard} 
if it is equivalent to $\mathcal{A}$. 

\begin{center}
\begin{figure}[t]
\includegraphics[height=2cm]{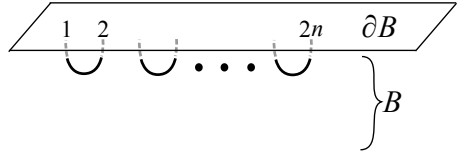}
\caption{The standard $n$-tangle  $\mathcal{A}= \mathcal{A}_n$.} 
\label{fig_tangleA}
\end{figure}
\end{center}

We say that an $n$-tangle $\mathcal{T}$ in $B$ is {\it trivial} 
if there exists an orientation-preserving homeomorphism $f: B \rightarrow B$ 
sending  $\mathcal{A}$ to $\mathcal{T}$. 
Here, $f$ does not necessarily respect the label of the endpoints. 
In other words, an $n$-tangle $\mathcal{T}$ in $B$ is trivial 
if we can move $N (\partial \mathcal{T} ; \mathcal{T})$  
%around the collar neighborhood $N (\partial B ; B )$ 
by an isotopy keeping 
$\partial \mathcal{T}$ lying in $\partial B$ 
so that the resulting tangle is equivalnt to $\mathcal{A}$. 
 %Here, $f|_{\partial B}$ is not necessarily  $\id_{\partial B}$. 
For example, Figure \ref{fig_BandC} shows examples of trivial $n$-tangles 
(1) $\mathcal{B} = \mathcal{B}_n$ and (2) $\mathcal{C}= \mathcal{C}_n$. 
\begin{center}
\begin{figure}[t]
\includegraphics[height=2.4cm]{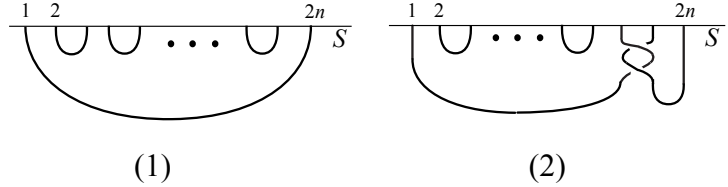}
\caption{The $n$-tangles (1) $\mathcal{B}= \mathcal{B}_n$  and 
(2) $\mathcal{C} =  \mathcal{C}_n$.} 
\label{fig_BandC}
\end{figure}
\end{center}

Consider the genus-$0$  Heegaard splitting $B^+ \cup_S B^-$ of $S^3$, 
where $B^+$ and $B^-$ are $3$-balls and 
$S= \partial B^+ = \partial B^-$. 
Let $\mathbb{R}^{3}_{+} = \{ (x,y,z) \in \mathbb{R}^{3}  \, | \, z \geq 0\}$ and 
$\mathbb{R}^{3}_{-} = \{ (x,y,z) \in \mathbb{R}^{3} \, | \, z \leq 0\}$. 
We identify 
$B^+$ with  $\mathbb{R}^{3}_{+}\cup \{ \infty \}$, 
$B^-$ with $ \mathbb{R}^{3}_{-} \cup \{ \infty \}$. 
Then  $S$ is written by $S= \{ (x,y,z) \in \mathbb{R}^{3} \, | \, z = 0\} \cup \{\infty\}$. 
Define an involution  $\rho : S^3 \to S^3$ by 
$\rho (x,y,z) = (x,y,-z)$. 
%\begin{eqnarray*}
%\rho: S^3 &\rightarrow& S^3
%\\
%(x,y,z) &\rightarrow& (x,y,-z). 
%\end{eqnarray*}
Note that $\rho |_{S} =  \id_S$, and $\rho$ interchanges $B^+$ with $B^-$. 
This means that $\rho$ interchanges $n$-tangles  in $B^+$ with $n$-tangles  in $B^-$. 
Consider the standard $n$-tangle $\mathcal{A}= \mathcal{A}_n$ in $B^-$. 
We set 
$\Bar{\mathcal{A}}= \Bar{\mathcal{A}_n}   = \rho(\mathcal{A}_n)$, 
which is a trivial $n$-tangle in $B^+$. 
Hereafter, 
we illustrate the splitting $B^+ \cup_S B^-$ as in Figure \ref{fig_rho}: 
again, the horizontal line indicates the sphere $S$, 
the $3$-ball $B^-$ lies below $S$ and 
the other $3$-ball $B^+$ lies above $S$. 

\begin{center}
\begin{figure}[t]
\includegraphics[height=3.2cm]{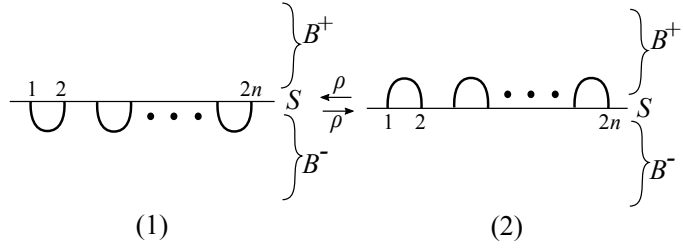}
\caption{
The involution $\rho: S^3 \rightarrow S^3$ interchanges 
(1) the $n$-tangle  $\mathcal{A}$ in $B^{-}$ with 
(2) the $n$-tangle  $\Bar{\mathcal{A}}$ in $B^{+}$.}
\label{fig_rho}
\end{figure}
\end{center}

Let $L$ be a link, possibly a knot  in $S^3 $. 
Suppose that 
$B^+ \cap L$ and $B^- \cap L$ are trivial $n$-tangles. 
Then we have the decomposition  
$$ (S^3, L) = (B^+, B^+ \cap L  )  \cup (B^-, B^- \cap L ) ,$$ 
which is denoted by $(L; S)$ or $(B^+ \cap L) \cup_S (B^- \cap L)$. 
We call such a decomposition  an {\it $n$-bridge decomposition} of $L$. 
We also call $S = \partial B^+ = \partial B^-$ a {\it bridge sphere} of $L$.

We say that  two $n$-bridge decompositions 
$(L; S)$ and $(L; S')$
are {\it equivalent}  
if $S$ and $S'$ are isotopic through  bridge spheres of $L$. 
%In this case, we write $S= S'$
%Hereafter we do not distinguish equivalent bridge spheres, 
%and we denote them by the same symbol for simplicity.  

A stabilization of an $n$-bridge decomposition 
$(L; S)= (B^+ \cap L) \cup_S (B^- \cap L)$ 
is defined as follows. 
%Let us define a stabilization of an $n$-bridge decomposition 
%$(L; S)= (B^+ \cap L) \cup_S (B^- \cap L)$.  
Take a point $p \in L \cap S$. 
We deform the bridge sphere $S$ 
near $p$  into a sphere $S_{(p,k)}$ so that 
the cardinality  of the intersection $L \cap S_{(p,k)}$ 
increases by $2k$ as illustrated 
in Figure \ref{fig_sphere-stab}(2). 
 More precisely, 
 let $U$ be a disk embedded in $B^+$ 
 whose boundary consists of three arcs $\alpha$, $\beta$ and $\gamma$, 
 where $\alpha= U \cap L$, $\beta = U \cap S$, see Figure \ref{fig_sphere-stab}(1). 
 Then $\gamma \cap L$ consists of an endpoint of $\gamma$, 
 and $\gamma \cap S$ consists of the other endpoint of $\gamma$. 
 Let $N(\gamma)$ be a regular neighborhood of $\gamma$. 
 We denote the union $B^- \cup N(\gamma)$ by $B^-_{(p,1)}$, 
 the closure of $B^+ \setminus N(\gamma)$ by $B^+_{(p,1)}$, 
 their common boundary by $S_{(p,1)}$. 
 %\marginal{change the notation $\gamma_k$?}
For $k \ge 1$, take $k$ parallel copies 
$\gamma'_1, \ldots , \gamma'_k$ of $\gamma$, and consider the union 
 $\gamma_k = \gamma'_1 \cup \cdots \cup \gamma_k' \subset U$. 
%For $k \ge 1$, take parallel $k$ copies $\gamma_k$ of $\gamma$. 
 Let $N(\gamma_k)$ be a closed regular neighborhood of $\gamma_k$. 
 We denote the union $B^- \cup N(\gamma_k)$ by $B^-_{(p,k)}$, 
 the closure of $B^+ \setminus N(\gamma_k)$ by $B^+_{(p,k)}$, 
 their common boundary by $S_{(p,k)}$. 
Then  
$$(L; S_{(p,k)}) = (B^+_{(p,k)} \cap L) \cup_{S_{(p,k)}} (B^-_{(p,k)} \cap L)$$
is an $(n+k)$-bridge decomposition. 
Note that $S_{(p,k)}$ does not depend on the disk $U$. 
It only depends on $S$, $p$ and $k$. 
We say that  $S_{(p,k)}$ 
{\it is obtained from} $S$ {\it by a  $k$-fold stabilization} ({\it at $p$}). 
When $L$ is a knot, the stabilized bridge decomposition 
$(L; S_{(p,k)})$ does not depend on the choice of the point $p$ 
in $L \cap S$. 
See Jang-Kobayashi-Ozawa-Takao 
\cite{JangKobayashiOzawaTakao19} for a rigorous proof of this fact.

\begin{center}
\begin{figure}[t]
\includegraphics[height=3cm]{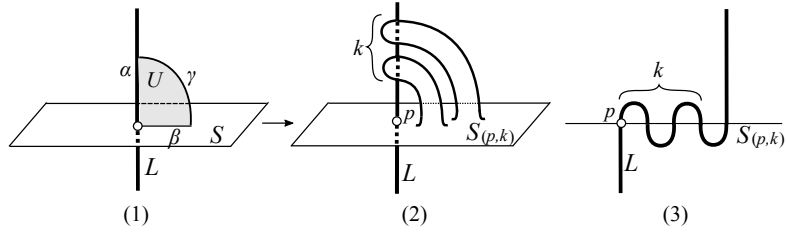}
\caption{A stabilization for a bridge sphere. 
(1) A bridge sphere $S$. 
(2) The bridge sphere $S_{(p,k)}$ obtained from $S$ by a $k$-fold stabilization, where $k=2$. 
(3) Illustration of the bridge decomposition $(L; S_{(p,k)})$.}
\label{fig_sphere-stab}
\end{figure}
\end{center}

It is proved by Otal \cite{Otal82} that 
for each $n \ge 1$, an $n$-bridge decomposition of the trivial knot $O$ is unique up to equivalence. 
We denote the $n$-bridge decomposition of $O$ by $(O; n)$. 
We note that the same consequence holds as well 
for the $2$-bridge knots by Otal \cite{Otal85}, and 
the torus knots by  Ozawa \cite{Ozawa11}.

Consider the $1$-bridge decomposition $(O; 1)$ with the bridge sphere $S$. 
Let $p $ be a point in $O \cap S$, and 
let $S_{(p,n-1)}$ be the bridge sphere obtained from $S$ by an $(n-1)$-fold stabilization, 
see Figure \ref{fig_trivial-bridge}(1). 
The resulting $n$-bridge decomposition of $O$ can be expressed 
by using the trivial tangles $\Bar{\mathcal{A}} = \Bar{\mathcal{A}}_n $ and 
$\mathcal{B}= \mathcal{B}_n$ as follows. 
$$(O; n)= \Bar{\mathcal{A}} 
\cup_{S_{(p, n-1)}} \mathcal{B}.$$ 

\begin{center}
\begin{figure}[t]
\includegraphics[height=3cm]{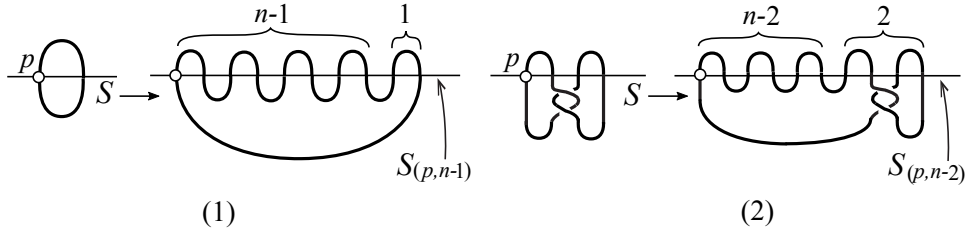}
\caption{(1) The $n$-bridge sphere 
of the trivial knot $O$ obtained from the stabilization of the $1$-bridge sphere.
(2) The $n$-bridge sphere 
of the Hopf link $H$ obtained from the stabilization of the $2$-bridge sphere.}
\label{fig_trivial-bridge}
\end{figure}
\end{center}

For the $2$-bridge decomposition $(H; S)$ of the Hopf link $H$, 
we pick a point $p \in H \cap S$ as in Figure \ref{fig_trivial-bridge}(2). 
 Let $S_{(p, n-2 )}$ be the bridge sphere obtained from $S$ by an $(n-2)$-fold stabilization. 
The resulting  $n$-bridge decomposition of $H$ is of the form 
$$(H; S_{(p, n-2)})= \Bar{\mathcal{A}} 
\cup_{S_{( p, n-2)}} \mathcal{C}$$
by using the trivial tangles $\Bar{\mathcal{A}}=\Bar{\mathcal{A}}_n $ and 
$\mathcal{C}=\mathcal{C}_n $.

\subsection{Curve graphs}
\label{subsec:Curve graphs}

Let $\Sigma$ be a compact orientable surface. 
A simple closed curve on $\Sigma$ is said to be {\it essential} 
if it does not bound a disk in $\Sigma$ and it is not parallel to a component of the boundary.  
A properly embedded arc in $\Sigma$ is said to be {\it essential} if it cannot be isotoped (rel. $\partial \Sigma$) into $\partial \Sigma$. 

Suppose that $\Sigma$ is not an annulus. 
The {\it curve graph} $\mathcal{C}(\Sigma)$ of $\Sigma$ is defined to be 
the $1$-dimensional simplicial complex 
whose vertices are the isotopy classes of essential simple closed curves on $\Sigma$ 
and a pair of distinct vertices spans an edge if and only if 
they admit disjoint representatives. 
By definition, when $\Sigma$ is a torus, a $1$-holed torus or a 
$4$-holed sphere, 
$\mathcal{C}(\Sigma)$ has no edges. 
In these cases, we alter the definition slightly for convenience. 
When $\Sigma$ is a torus or a 1-holed torus, 
two distinct vertices of $\mathcal{C}(\Sigma)$ span an edge 
if and only if their geometric intersection number is equal to $1$. 
When $\Sigma$ is a $4$-holed sphere, 
two vertices of $\mathcal{C}(\Sigma)$ span an edge 
if and only if their geometric intersection number is equal to $2$. 

Similarly, 
the {\it arc and curve graph} $\mathcal{AC}(\Sigma)$ of $\Sigma$ is the $1$-dimenaional simplicial complex defined as follows.
When $\Sigma$ is not an annulus, 
the vertices of $\mathcal{AC}(\Sigma)$ are the isotopy classes of essential simple closed curves and 
isotopy classes of essential arcs (rel. $\partial \Sigma$) on $\Sigma$. 
A pair of distinct vertices spans an edge if and only if 
they admit disjoint representatives. 
When $\Sigma$ is an annulus, 
the vertices of $\mathcal{AC}(\Sigma)$ are isotopy classes of essential arcs (rel. endpoints).  
Two distinct vertices spans an edge if and only if 
they admits disjoint representatives. 
In this case, we set $\mathcal{C}(\Sigma) := \mathcal{AC}(\Sigma)$ for convenience.  

By $\mathcal{C}^{(0)}(\Sigma)$ and $\mathcal{AC}^{(0)}(\Sigma)$ 
we denote the set of vertices of $\mathcal{C}(\Sigma)$ and $\mathcal{AC}(\Sigma)$,  respectively. 
We can regard $\mathcal{C}(\Sigma)$ (respectively, $\mathcal{AC}(\Sigma)$)  as the geodesic metric space 
equipped with the simplicial metric $d_{\mathcal{C}(\Sigma)}$ (respectively, $d_{\mathcal{AC}(\Sigma)}$). 

Let $\delta>0$. 
A geodesic metric space is said to be {\it $\delta$-hyperbolic} 
if any geodesic triangle is {\it $\delta$-slim}, 
that is, 
each side of the triangle lies in the closed $\delta$-neighborhood of the union of the other two sides. 
 
Recent independent work by  
Aougab \cite{Aougab13}, Bowditch \cite{Bowditch14}, 
Clay-Rafi-Schleimer \cite{ClayRafiSchleimer14} and 
Hensel-Przytycki-Webb \cite{HenselPrzytyckiWebb15} 
after a famous work on the hyperbolicity of $\mathcal{C}(\Sigma)$ 
by Masur-Minsky \cite{MasurMinsky99}  shows the following.

\begin{thm}
\label{thm:hyperbolicity}
%The curve graph $\mathcal{C}(\Sigma)$ is a $17$-hyperbolic space. 
The curve graph $\mathcal{C}(\Sigma)$ is a $\delta$-hyperbolic space, where 
$\delta$ does not depend on the topological type of $\Sigma$.
\end{thm}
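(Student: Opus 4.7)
The plan is to follow the approach of Hensel--Przytycki--Webb, which yields the cleanest and most uniform proof of this statement. The overall strategy is to work first with the \emph{arc graph} $\mathcal{A}(\Sigma)$ (or rather the arc and curve graph $\mathcal{AC}(\Sigma)$) for surfaces with nonempty boundary, establish uniform hyperbolicity there via an explicit family of combinatorial paths called \emph{unicorn paths}, and then transport the conclusion back to $\mathcal{C}(\Sigma)$ via the quasi-isometry between $\mathcal{C}(\Sigma)$ and $\mathcal{AC}(\Sigma)$ (or by puncturing argument). The sporadic low-complexity cases (torus, once-holed torus, four-holed sphere) should be dispatched separately using the well-known isomorphism of $\mathcal{C}(\Sigma)$ with the Farey graph, which is a tree-like $\delta$-hyperbolic complex.

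First, I would set up unicorn arcs. Given two essential arcs $a,b$ in tight position with distinct endpoints, for each intersection point $p \in a\cap b$ one defines the unicorn arc $c(a,b;p) := a_p \cup b_p$, where $a_p$ is the subarc of $a$ from an endpoint to $p$ and $b_p$ is the subarc of $b$ from $p$ to an endpoint. Ordering the intersection points by the length of $a_p$ produces a sequence $a = c_0, c_1, \dots, c_n = b$ of essential simple arcs with the property that consecutive $c_i$ and $c_{i+1}$ are disjoint (after a small surgery), giving a path in $\mathcal{A}(\Sigma)$ from $a$ to $b$. This is a purely combinatorial construction; no metric geometry is required.

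The heart of the argument is the \emph{slim unicorn triangle lemma}: for any three arcs $a,b,c$, every vertex on the unicorn path $P(a,b)$ lies within uniformly bounded distance (one can extract the bound $1$ in the arc graph) of $P(a,c) \cup P(c,b)$. I would prove this by inspecting the combinatorics of how the three arcs intersect and showing that an intermediate unicorn from $a$ to $b$ either already appears as a unicorn from $a$ to $c$ or from $c$ to $b$, or can be replaced by one that does. This is the main obstacle, since one has to track endpoints carefully and rule out exceptional crossing patterns; the proof is case-by-case but each case is elementary. Once this is established, a standard geodesic-vs.-path comparison argument shows any geodesic from $a$ to $b$ lies uniformly close to $P(a,b)$, and then uniform thin-triangles for geodesic triangles follows formally.

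Finally, I would transfer the hyperbolicity from $\mathcal{A}(\Sigma)$ (or $\mathcal{AC}(\Sigma)$) to $\mathcal{C}(\Sigma)$: when $\partial\Sigma \neq \emptyset$, capping off boundary components or taking boundaries of regular neighborhoods of arcs gives a Lipschitz map $\mathcal{AC}(\Sigma) \to \mathcal{C}(\Sigma)$ whose quasi-inverse is also Lipschitz with constants independent of the surface, so uniform hyperbolicity is preserved. For closed surfaces, I would puncture at a point and compare $\mathcal{C}(\Sigma)$ with $\mathcal{C}(\Sigma \setminus \{\mathrm{pt}\})$ via the natural forgetful map, again with uniform control. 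Combining these steps yields a single constant $\delta$ that works for all surfaces of sufficient complexity, and the sporadic cases were handled at the outset, completing the proof.
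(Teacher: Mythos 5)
This theorem is quoted in the paper as a known result --- the authors give no proof of their own, only citations to Aougab, Bowditch, Clay--Rafi--Schleimer and Hensel--Przytycki--Webb, and they rely specifically on the Hensel--Przytycki--Webb argument to extract the explicit constant $102$. Your sketch is an accurate outline of exactly that Hensel--Przytycki--Webb unicorn-path proof (uniform $1$-slimness of unicorn triangles in the arc graph, the guessing-geodesics criterion, and the uniform transfer to $\mathcal{AC}(\Sigma)$ and $\mathcal{C}(\Sigma)$, with sporadic cases via the Farey graph), so it takes essentially the same route as the source the paper depends on.
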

We note that in \cite{HenselPrzytyckiWebb15} it was shown that the constant $102$
\footnote{
In \cite{HenselPrzytyckiWebb15} the hyperbolicity constant is 
defined 
using the {\it $k$-centered triangle} condition instead of 
the $\delta$-slim triangle 
condition, which we adopt in this paper. 
The claim of \cite{HenselPrzytyckiWebb15} is that any geodesic triangle of $\mathcal{C} (\Sigma)$ is 
$17$-centered.  
By Bowditch \cite[Lemma 6.5]{Bowditch06}, this implies that 
$\mathcal{C} (\Sigma)$ is $17 \cdot 6 = 102$-hyperbolic.} is enough 
for the hyperbolicity constant $\delta$ in the above theorem.

Let $\Sigma$ be a compact orientable surface with a negative Euler characteristic. 
Recall that a subsurface $Y$ of $\Sigma$ is said to be 
{\it essential} if each component of $\partial Y$ is not contractible in $\Sigma$. 
%We do not allow boundary parallel annuli to be essential subsurfaces. 
We do not allow annuli homotopic to a component of $\partial \Sigma$ to be essential subsurfaces. 
We always assume that essential subsurfaces are connected and proper. 
Let $Y$ be an essential subsurface of $\Sigma$. % that is not a $3$-holed sphere. 
The {\it subsurface projection} $\pi_{Y}:\mathcal{C}^{(0)}(\Sigma) \rightarrow \mathcal{P}(\mathcal{C}^{(0)}(Y))$, 
where $\mathcal{P} (~ \cdot ~)$ denotes the power set, 
is defined as follows. 
First, we consider the case where $Y$ is not an annulus. 
Define $\kappa_{Y}:  \mathcal{C}^{(0)}(\Sigma) \rightarrow \mathcal{P}(\mathcal{AC}^{(0)}(Y))$ to be the map 
that takes $\alpha \in \mathcal{C}^{(0)}(\Sigma)$ to $\alpha \cap Y \subset \mathcal{P}(\mathcal{AC}^{(0)}(Y))$. 
Further, define $\sigma_{Y}:\mathcal{AC}^{(0)}(Y) \rightarrow \mathcal{P}(\mathcal{C}^{(0)}(Y))$ 
by taking $\alpha \in \mathcal{AC}^{(0)}(Y)$ to the set of essential simple closed curves that  are components of the boundary of $N (\alpha \cup \partial Y;Y)$. 
The map $\sigma_{Y}$ is naturally extends to the map 
$\sigma_{Y}:\mathcal{P}(\mathcal{AC}^{(0)}(Y)) \rightarrow \mathcal{P}(\mathcal{C}^{(0)}(Y))$. 
The map $\pi_Y$ is then defined by $\pi_{Y}:=  \sigma_{Y} \circ \kappa_{Y} $. 
Next we consider the case where $Y$ is an annulus. 
Fix a hyperbolic metric on $\Sigma$. 
Let $p:\tilde{Y} \rightarrow \Sigma$ be the covering map 
corresponding to $\pi_{1}(Y)$. 
Let $\hat{Y}$ be the metric completion of $\tilde{Y}$. 
We can identify $Y$ with $\hat{Y}$. 
Suppose that $\alpha \in \mathcal{C}^{0}(\Sigma)$. 
We can regard $p^{-1}(\alpha)$ as the set of properly embedded arcs in $Y$ 
and define $\pi_{Y}(\alpha)$ to be the set of the properly embedded arcs that are essential in $Y$. 
The following theorem, called the {\it bounded geodesic image theorem}, was proved by 
Masur-Minsky \cite{MasurMinsky00}. 
%\begin{thm}[Masur-Minsky \cite{MasurMinsky00}]
\begin{thm}
\label{thm:geodesic image}
Let $\Sigma$ be a compact orientable surface with a negative Euler characteristic. 
Then there exists a constant $C>0$ satisfying the following condition. 
Let $Y \subsetneq \Sigma$ be an essential subsurface that is not a $3$-holed sphere.  
Let $c$ be a geodesic in $\mathcal{C}(\Sigma)$ such that  
$\pi_{Y}(\alpha)\not=\emptyset$ for any vertex $\alpha$ of $c$. 
Then it holds $\mathrm{diam}_{\mathcal{C}(Y)}(\pi_{Y}(c)) \le C$.  
\end{thm}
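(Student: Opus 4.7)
The plan is to prove the statement by contradiction, leveraging the uniform $\delta$-hyperbolicity of $\mathcal{C}(\Sigma)$ guaranteed by Theorem~\ref{thm:hyperbolicity} together with the coarse Lipschitz property of subsurface projection. First, I would record the basic fact that $\pi_Y$ is coarsely Lipschitz: if $\alpha, \beta$ are vertices of $\mathcal{C}(\Sigma)$ joined by an edge and each cuts $Y$, then $\operatorname{diam}_{\mathcal{C}(Y)}(\pi_Y(\alpha) \cup \pi_Y(\beta)) \leq 2$. This follows by a direct inspection of the definition of $\pi_Y$: disjoint representatives of $\alpha$ and $\beta$ restrict to collections of disjoint essential arcs in $Y$, whose boundary surgeries produce curves at distance at most $2$ in $\mathcal{C}(Y)$. (The annular case is similar, using lifts to the annular cover.)

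Next, suppose for contradiction that the conclusion fails. Then for every constant $C > 0$ there is a geodesic $c = (\alpha_0, \ldots, \alpha_n)$ in $\mathcal{C}(\Sigma)$ with every vertex intersecting $Y$ essentially, yet $\operatorname{diam}_{\mathcal{C}(Y)} \pi_Y(c) > C$. Pick endpoints $\alpha_0, \alpha_n$ achieving large projection distance. Since $Y \subsetneq \Sigma$ is a proper essential subsurface that is not a thrice-holed sphere, its boundary contains a simple closed curve $\gamma$ that is essential in $\Sigma$ and hence determines a vertex of $\mathcal{C}(\Sigma)$. Every vertex $\alpha_i$ of $c$ intersects $Y$, so $\alpha_i$ cannot be disjoint from $\gamma$ (such a disjoint $\alpha_i$ would lie in $Y$ or in its complement, but lying in $Y$ would still give $\pi_Y(\alpha_i)$ controlled by $\gamma$; one needs to handle such corner cases separately).

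Now I would apply hyperbolicity to the geodesic triangle with vertices $\alpha_0$, $\alpha_n$, and $\gamma$. Fix geodesics $[\alpha_0, \gamma]$ and $[\alpha_n, \gamma]$. By $\delta$-slimness, every vertex $\alpha_i$ of $c$ lies within distance $\delta$ of a vertex on one of the two other sides, say $\beta_i$. The strategy is to argue that along $[\alpha_0, \gamma]$ the $\pi_Y$-images cannot drift far from $\pi_Y(\gamma)$: once a vertex on that side becomes disjoint from $\gamma$, we can iterate the argument on a shorter side, and the coarse Lipschitz property (Step~1) combined with induction on length yields a bound on the total projection diameter in terms of $\delta$ alone. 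Feeding this back into the original triangle, one concludes $\operatorname{diam}_{\mathcal{C}(Y)} \pi_Y(c)$ is bounded by a function of $\delta$, contradicting the assumption that it can be made arbitrarily large.

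The main obstacle will be converting the triangle-slimness statement, which lives in $\mathcal{C}(\Sigma)$, into genuine projection control in $\mathcal{C}(Y)$, since being $\delta$-close in $\mathcal{C}(\Sigma)$ does not automatically imply anything about $\pi_Y$. The cleanest way to overcome this is to replace the geodesic $c$ by a uniform quasi-geodesic with better combinatorial structure, for instance a \emph{unicorn path} in the sense of Hensel-Przytycki-Webb, for which projections to subsurfaces are known to have uniformly bounded diameter once every vertex cuts the subsurface. An alternative, closer to the original Masur--Minsky argument, is to pass to a tight geodesic and use the fact that tight sequences realize subsurface projections of their endpoints within a bounded error; this route, however, requires a prior setup of tightness that I would want to avoid in an expository proof of the present statement.
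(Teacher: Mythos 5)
The paper does not prove this statement: it is the Masur--Minsky bounded geodesic image theorem, quoted from \cite{MasurMinsky00}, with the remark that \cite{Webb15} makes the constant $C$ independent of the topological type of $\Sigma$. So the only question is whether your sketch constitutes an independent proof, and it does not. Two concrete problems. First, your assertion that every vertex $\alpha_i$ of $c$ must intersect $\gamma \subset \partial Y$ is false: any curve contained in $Y$ cuts $Y$ (its projection is itself) while being disjoint from $\partial Y$. This is not a removable ``corner case''; vertices disjoint from $\partial Y$ are exactly the ones near which the projection could a priori jump, so they are the heart of the matter. Second, and more seriously, the central step of your argument is vacuous as written: $\gamma = \partial Y$ does not essentially intersect $Y$, so $\pi_Y(\gamma)$ is empty and ``the $\pi_Y$-images cannot drift far from $\pi_Y(\gamma)$'' has no content; moreover the vertices of the auxiliary sides $[\alpha_0,\gamma]$ and $[\alpha_n,\gamma]$ need not cut $Y$ at all, so the coarse Lipschitz estimate of your Step~1 is unavailable along them and your proposed induction on a shorter side never starts. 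You correctly identify the obstruction yourself --- $\delta$-closeness in $\mathcal{C}(\Sigma)$ gives no information about $\pi_Y$ --- but the fix you offer (pass to unicorn paths with uniformly bounded subsurface projections, or to tight geodesics) is precisely the substance of the Hensel--Przytycki--Webb/Webb and Masur--Minsky proofs; invoking those properties as known is citing the theorem, not proving it.

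Given that the paper itself uses this result as a black box, the appropriate move is to do the same and cite \cite{MasurMinsky00} and \cite{Webb15}. If you do want a self-contained argument, you would need to actually establish the key lemma you are outsourcing --- e.g.\ that for a unicorn path $P(a,b)$ between arcs that both cut $Y$, the projection $\pi_Y(P(a,b))$ has uniformly bounded diameter, together with the fact that geodesics in $\mathcal{C}(\Sigma)$ lie at uniformly bounded Hausdorff distance from such paths --- and neither of these is a routine consequence of $\delta$-hyperbolicity plus the Lipschitz property.
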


We remark that 
Webb \cite {Webb15} showed that the constant $C$ in the above theorem 
can be taken to be independent of the topological type of $\Sigma$.

\subsection{The distance of bridge decompositions}
\label{subsec:The distance of bridge decompositions}
Let $L$ be a link in $S^{3}$, and 
let $(B^{+} \cap L) \cup_{S} (B^{-} \cap L)$ be an $n$-bridge decomposition of $L$ 
with $n \geq 2$. 
Set $S_L:= \Cl(S - N (S \cap L; S))$. 
We denote by $\mathcal{D}^{+}$ (respectively, $\mathcal{D}^{-}$) the set of 
vertices of $\mathcal{C}(S_{L})$ 
that are represented by simple closed curves bounding 
disks in $B^{+}-L$ (respectively $B^{-}-L$). 
The {\it distance} $d(L;S)$ of the bridge decomposition $(S; L)$ 
is defined by 
$d(L;S):=\mathrm{min}\,d_{\mathcal{C}(S_{L})}(\alpha,\beta)$, 
where the minimum is taken over all $\alpha \in \mathcal{D}^{+}$ and $\beta \in \mathcal{D}^{-}$.

\begin{lem}
\label{lem: distance stabilized}
The distance of a stabilized bridge decomposition of a link in $S^3$ is at most $1$. 
%Let $(L; S)$ be an $n$-bridge decomposition 
%of a link $L \subset S^3$ with $n \geq 3$. 
%If $(L; S)$ is stabilized, the distance $d(L; S)$ is at most $1$.
\end{lem}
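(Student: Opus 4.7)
The plan is to exhibit, on each side of the stabilized bridge sphere $S_{(p,k)}$, an explicit compressing disk so that the two boundary curves on the punctured sphere $S_L$ realize the distance bound. First I would set up notation: write $A_1 \subset B^+ \cap L$ for the arc of the original $B^+$-tangle through $p$. The $k$ parallel copies $\gamma_1', \ldots, \gamma_k'$ of $\gamma$ cut $A_1$ into $k+1$ sub-arcs lying in $B^+_{(p,k)}$, which I enumerate along $A_1$ starting from $p$ as $\alpha_+ = [p, f_1]$, the middle pieces $[f_{2i}, f_{2i+1}]$ for $i = 1, \ldots, k-1$, and $A_1'' = [f_{2k}, p'']$, where $p''$ denotes the other endpoint of $A_1$ on $S$; the complementary $k$ sub-arcs $T_i = [f_{2i-1}, f_{2i}]$ lie in $B^-_{(p,k)}$, and the $2k$ new punctures of $S_{(p,k)} \cap L$ are exactly the points $f_j$. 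Since both $(B^+_{(p,k)}, B^+_{(p,k)} \cap L)$ and $(B^-_{(p,k)}, B^-_{(p,k)} \cap L)$ are trivial tangles, each arc is boundary-parallel and admits a bridge disk, which in turn produces a compressing disk on that side whose boundary on $S_{(p,k)}$ is a simple closed curve encircling precisely the two endpoints of the arc.

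With this vocabulary I would always take $c_- \in \mathcal{D}^-$ to be the boundary curve obtained from the bridge disk for the tube arc $T_1$, so that $c_-$ bounds a disk on $S_{(p,k)}$ containing exactly the two new punctures $\{f_1, f_2\}$. The construction of a matching $c_+ \in \mathcal{D}^+$ then splits into cases. If $k \geq 2$, I would take $c_+$ from the bridge disk of $A_1''$, encircling $\{f_{2k}, p''\}$; this set is disjoint from $\{f_1, f_2\}$ (using $k \geq 2$), and a short check shows both curves are essential on $S_L$, so $c_+$ and $c_-$ admit disjoint representatives and $d_{\mathcal{C}(S_L)}(c_+, c_-) \leq 1$. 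If instead $k = 1$ and $n \geq 2$, I would use the bridge disk of any arc $A_j$ with $j \geq 2$, whose endpoints lie among the original $2n$ punctures of $L \cap S$ and are automatically disjoint from $\{f_1, f_2\}$, giving the same conclusion.

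The hard part is the borderline case $n = k = 1$, in which $S_L$ is a $4$-holed sphere and the curve graph $\mathcal{C}(S_L)$ is defined via geometric intersection number $2$ rather than disjointness. Here I would take $c_+$ from the bridge disk of $\alpha_+$, so that $c_+$ encircles $\{p, f_1\}$; the pair $c_+, c_-$ then determines distinct essential $2|2$ partitions of the four punctures on $S_{(p,1)}$, which on a $4$-holed sphere forces their minimal geometric intersection number to equal $2$, and hence $d_{\mathcal{C}(S_L)}(c_+, c_-) = 1$. The main obstacle is exactly this exceptional case: the naive ``pick two far-apart bridge-disk curves'' strategy fails because there are not enough tangle arcs to avoid the new punctures, and one must invoke the unusual convention defining edges of the $4$-holed sphere curve graph and verify that the two candidate curves genuinely realize distinct $2|2$ partitions.
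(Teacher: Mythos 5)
Your construction is essentially the paper's: both proofs take the boundary of a bridge disk for the new ``tube'' arc on the $B^-$ side and the boundary of a bridge disk for an arc avoiding the stabilization region on the $B^+$ side, and conclude $d\le 1$ from disjointness. (The paper reduces to the $1$-fold case and, for $n\ge 2$, makes exactly your ``$A_j$ with $j\ge 2$'' choice; your direct treatment of general $k$ via $A_1''$ is an equally valid variant.) Two of your justifications, however, are not valid as stated, even though the specific curves you name do work. First, the inference ``the two encircled pairs of punctures are disjoint, hence $c_+$ and $c_-$ admit disjoint representatives'' is false in general: on a punctured sphere a curve bounding a disk containing exactly $\{f_{2k},p''\}$ can be isotoped to wander around $f_1$ and $f_2$ before closing up, so it may intersect $c_-$ arbitrarily often while still cutting off the same pair. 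What actually makes your curves disjoint is that the two bridge disks themselves can be chosen disjoint in $S^3$ (this is how the paper phrases it, with $Z_1^+$ and $Z_{n+1}^-$), which forces their boundary traces on $S_{(p,k)}$ to be disjoint; you should argue from the disks, not from the puncture sets.

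Second, in the case $n=k=1$ your claim that two essential curves realizing distinct $2|2$ partitions of the four punctures must have geometric intersection number exactly $2$ is false. Identifying essential curves on the $4$-punctured sphere with slopes $p/q$ (so that curves of slopes $p/q$ and $p'/q'$ intersect in $2|pq'-p'q|$ points and the induced partition depends only on $(p,q)$ modulo $2$), the slopes $0/1$ and $3/1$ induce distinct partitions yet intersect $6$ times. Your two specific curves are the standard slope-$0$/slope-$\infty$ pair for $(O;2)$ and do intersect exactly twice, but this has to be read off from the explicit picture (the paper simply declares $d(O;2)=1$ ``easily checked''), not deduced from the partitions alone. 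Both issues are repairable without changing your curves, so the proof goes through once these two steps are argued correctly.
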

\begin{proof}%[Proof of Lemma $\ref{prop: distance stabilized}$]
Let $(L; S) = (B^+ \cap L) \cup_S (B^- \cap L)$ be an $n$-bridge decomposition of $L$. 
%Take a point $p \in \partial T^+_n \cap T^-_n$. 
Take a point $p \in S \cap L$. 
Consider the $(n+1)$-bridge decomposition 
$$(L; S_{(p, 1)}) = (B^+_{(p, 1)} \cap L) \cup_{S_{(p,1)}} (B^-_{(p,1)} \cap L).$$  
It suffices to show that $d (L; S_{(p,1)})$ is at most $1$. 
%If $n=1$, then $(L; S) = (O; 1)$ and $(L; S_{(p,1)}) = (O; 2)$. 
%Hence, the assertion follows from 
%Example \ref{ex:Goeritz group of 2-bridge decomposition}. 
%Suppose that $n \geq 2$. 
If $n=1$, then $(L; S) = (O; 1)$ and $(L; S_{(p,1)}) = (O; 2)$. 
It is thus easily checked that $d(L; S_{(p, 1)}) = 1$. 
See the definition of the curve graph for a $4$-holed sphere. 
Suppose that $n \geq 2$.   
Let $T^+_1, \ldots , T^+_{n-1}$ 
(respectively, $T^-_1, \ldots , T^-_{n-1}$) be the components of 
$B^+ \cap L$ (respectively, $B^- \cap L$) disjoint from $p$. 
We note that the arcs $T^+_1, \ldots , T^+_{n-1}$ 
(respectively, $T^-_1, \ldots , T^-_{n-1}$) remain to be components of 
$B^+_{(p,1)} \cap L$ (respectively, $B^-_{(p,1)} \cap L$). 
Let $T^-_{n+1}$ be the (unique) component of 
$( B^-_{(p,1)} \cap L ) - \bigcup_{i=1}^{n-1} T^-_{i}$ disjoint from $p$. 
Then there exist disjoint disks $Z^+_1 \subset B^+_{(p,1)}$ and 
$Z^-_{n+1} \subset B^-_{(p,1)}$ such that 
$Z^+_1 \cap L = \partial Z^+_1 \cap L = T^+_1$, 
$\partial Z^+_1 - T^+_1 \subset S_{(p,1)}$, 
$Z^-_{n+1} \cap L = \partial Z^-_{n+1} \cap L = T^-_{n+1}$
 and 
$\partial Z^-_{n+1} - T^-_{n+1} \subset S_{(p,1)}$. 
The simple closed curve 
$\alpha := \partial N (Z^+_1 \cap S_{(p,1)}; S_{(p,1)})$ bounds 
a disk in $B^+_{(p,1)} - (B^+_{(p,1)} \cap L)$ while 
$\beta := \partial N (Z^-_{n+1} \cap S_{(p,1)}; S_{(p,1)})$ bounds 
a disk in $B^-_{(p,1)} - (B^-_{(p,1)} \cap L)$. 
Since both $\alpha$ and $\beta$ are disjoint essential simple closed curves 
in $\Cl (S_{(p,1)} - N (S_{(p,1)} \cap L; S_{(p,1)}))$, the distance $d (L; S_{(p,1)})$ is at most $1$.  
\end{proof}

Recall that a subset $Y$ of a geodesic metric space $X$ is said to be 
{\it $K$-quasiconvex} in X for a positive number $K>0$ if, for any
two points $x, y$ in $Y$, any geodesic segment in $X$ connecting $x$ and $y$ lies in the 
$K$-neighborhood of $Y$. 
We are going to discuss the quasiconvexity of $\mathcal{D}^+$ and $\mathcal{D}^-$ in $\mathcal{C} (S_L)$. 
In fact, the following holds. 
\begin{thm}
\label{thm:convexity}
There exist a constant $K > 0$ satisfying the following property. 
For any $n$-bridge decomposition of a link $L$ in $S^3$ with $n \geq 2$, 
the set $\mathcal{D}^{+}$ $($respectively, $\mathcal{D}^{-})$ is $K$-quasiconvex in $\mathcal{C}(S_{L})$. 
\end{thm}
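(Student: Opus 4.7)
The plan is to adapt the Masur--Minsky disk-surgery proof of quasiconvexity for the disk set of a handlebody to our trivial-tangle setting. Since the argument is symmetric, I treat only $\mathcal{D}^{+}$.

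The core input is a disk-curve surgery lemma: if $D \subset B^{+}$ is a properly embedded disk disjoint from $L$ with $\partial D$ essential in $S_{L}$, and $c$ is an essential simple closed curve on $S_{L}$ intersecting $\partial D$ transversely and minimally with $|\partial D \cap c| > 0$, then cutting $D$ along an outermost arc of $D \cap c$ produces two disks $D', D'' \subset B^{+} \setminus L$, with boundaries disjoint from $\partial D$, satisfying $|\partial D' \cap c| + |\partial D'' \cap c| = |\partial D \cap c| - 2$, and with at least one of $\partial D', \partial D''$ essential in $S_{L}$ (and hence representing an element of $\mathcal{D}^{+}$). The point distinguishing this from the handlebody case is that, since $D$ is already disjoint from $L$, all pieces of the surgery automatically remain in $B^{+} \setminus L$, so $\mathcal{D}^{+}$ is closed under this operation.

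With the surgery lemma in hand, the main argument follows the standard template. Given $\alpha, \beta \in \mathcal{D}^{+}$ realized by disks $D_{\alpha}, D_{\beta}$, and a geodesic $\alpha = c_{0}, c_{1}, \ldots, c_{m} = \beta$ in $\mathcal{C}(S_{L})$, I would inductively apply the surgery lemma to build disks $D_{i}$ representing classes in $\mathcal{D}^{+}$ with $\partial D_{i}$ at bounded distance from $c_{i}$ in $\mathcal{C}(S_{L})$. The standard estimate shows that if such an intermediate disk-bounding curve drifts too far from the geodesic, then its subsurface projections to some essential $Y \subsetneq S_{L}$ diverge beyond the bound of the bounded geodesic image theorem, a contradiction. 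Combined with the $\delta$-hyperbolicity of $\mathcal{C}(S_{L})$ (Theorem \ref{thm:hyperbolicity}), the surgery process yields a sequence of curves in $\mathcal{D}^{+}$ shadowing the geodesic within a distance controlled only by $\delta$ and the bounded geodesic image constant $C$ of Theorem \ref{thm:geodesic image}.

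The main obstacle is extracting a bound on $K$ depending only on these two uniform constants, and not on the bridge number $n$ or the link $L$. This requires a careful analysis of the subsurface projections of $\mathcal{D}^{+}$: one must verify that any essential subsurface $Y \subsetneq S_{L}$ to which $\mathcal{D}^{+}$ projects with large diameter is in fact a \emph{hole} in the sense that $\mathcal{D}^{+}$ fails to project to $Y$ at all, and moreover that the trivial-tangle structure of $(B^{+}, B^{+} \cap L)$ constrains such holes topologically in a way that does not degrade with $n$. Once this uniform control on projections is in place, the disk-surgery iteration yields a constant $K$ depending only on $\delta$ and $C$, uniformly over all bridge decompositions with $n \geq 2$.
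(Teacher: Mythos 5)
Your overall strategy (disk surgery plus subsurface projections and the bounded geodesic image theorem) is the Masur--Minsky route, which the paper explicitly acknowledges as viable but deliberately does \emph{not} follow: instead it takes $\alpha,\beta\in\mathcal{D}^{+}$ realized by disks, produces by iterated disk-along-disk surgery a path $\gamma_{0}=\alpha,\ldots,\gamma_{s}=\beta$ lying entirely in $\mathcal{D}^{+}$ whose vertices are $(\alpha,\beta)$-curves with at most four corners, and then invokes the Przytycki--Sisto/Vokes/Bowditch machinery (Lemmas \ref{lem:constant hprime}--\ref{lem:constant 2R plus 2}) to conclude that \emph{any} geodesic from $\alpha$ to $\beta$ stays within a universal distance $2R(h_{0})+2\le 1796$ of that path. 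The payoff of the paper's route is precisely the explicit, surface-independent constant; your route, even if completed, would leave the uniformity of $K$ resting on the step you yourself identify as ``the main obstacle'' and do not resolve.

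Beyond that, there is a concrete gap in your surgery lemma. You take a disk $D\subset B^{+}\setminus L$ and a simple closed curve $c$ on $S_{L}$ and propose to cut $D$ ``along an outermost arc of $D\cap c$.'' But $D$ is properly embedded in $B^{+}$, so $D\cap S=\partial D$ and hence $D\cap c=\partial D\cap c$ is a finite set of points; there are no arcs of intersection to cut along. The operation only makes sense when $c$ is replaced by a second properly embedded disk $E\subset B^{+}\setminus L$ (as in the paper's Section \ref{subsec:The distance of bridge decompositions}: one glues a subdisk of $D$ to an outermost subdisk of $E$). If instead you intended surgery of the boundary curves, i.e.\ forming bicorn curves from arcs of $\partial D$ and arcs of $c$, then the resulting curves have no reason to bound disks in $B^{+}\setminus L$ when $c\notin\mathcal{D}^{+}$ --- and in your main argument $c=c_{i}$ is an arbitrary vertex of the geodesic, which is exactly the case where this fails. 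So the inductive step producing disks $D_{i}$ in $\mathcal{D}^{+}$ near each $c_{i}$ is not established; the correct Masur--Minsky argument locates disks near geodesic vertices by a different mechanism (surgering one disk set element along another and controlling the result via nested train tracks/subsurface projections), not by surgering a disk along a geodesic vertex.
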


This theorem can actually be proved using 
Masur-Minsky \cite{MasurMinsky04} and 
Hamenst\"{a}dt \cite[Section 3]{Hamenstadt18}. 
In the following we adopt Vokes's arguments in \cite{Vokes18} and \cite{Vokes19} 
to explain that the $K$ in the above theorem can be taken to be at most $1796$.

In what follows, we assume that 
simple closed curves in a surface are properly embedded, essential, 
and their intersection is transverse and minimal up to isotopy. 

A compact orientable genus-$g$  surface $\Sigma$ with $m$ holes is said to be 
{\it non-sporadic} if $3g + m \geq 5$.  
Let $\alpha$ and $\beta$ be simple closed curves in a non-sporadic surface $\Sigma$. 
A simple closed curve $\gamma$ in $\Sigma$ is called an 
{\it $(\alpha, \beta)$-curve with $0$ corners} if $\gamma = \alpha$ or $\gamma = \beta$. 
A simple closed curve $\gamma$ in $\Sigma$ is called an 
{\it $(\alpha, \beta)$-curve with $2$ corners} if 
there exist subarcs $\alpha' \subset \alpha$ and $\beta' \subset \beta$ such that 
$\partial \alpha' = \partial \beta'$, 
$\Int \alpha' \cap \Int \beta' = \emptyset$, and  
$\gamma$ is homotopic in $\Sigma$ to the concatenation $\alpha' \ast \beta'$, 
where orientations are chosen in an appropriate way.  
%\begin{itemize}
%\item
%$\partial \alpha' = \partial \beta'$ (we allow the case where $\alpha' = \alpha$ and $\beta' = \beta$. In that case, we regard that 
%$\partial \alpha' = \partial \beta'$ is a single point), 
%\item
%$\Int \alpha' \cap \Int \beta' = \emptyset$ (in the case of $\alpha' = \alpha$ and $\beta' = \beta$, we set 
%$\Int \alpha' = \alpha' - \partial \alpha'$ and 
%$\Int \beta' = \beta' - \partial \beta'$, where $\partial \alpha'$ and $\partial \beta'$ are defined as above), and  
%\item
%$\gamma$ is homotopic in $\Sigma$ to the concatenation $\alpha' \ast \beta'$, 
%where orientations are chosen in an appropriate way.  
%\end{itemize}
A simple closed curve $\gamma$ in $\Sigma$ is called an 
{\it $(\alpha, \beta)$-curve with $4$ corners} if 
there exist subarcs $\alpha'_1, \alpha'_2  \subset \alpha$ and $\beta'_1, \beta'_2 \subset \beta$ satisfying the following. 
\begin{itemize}
\item
$\Int \alpha'_1$, $\Int \alpha'_2$, $\Int \beta_1'$, $\Int \beta_2'$ are mutually disjoint, 
\item
$\partial \alpha'_1 \cup \partial \alpha'_2 = \partial \beta'_1 \cup \partial \beta'_2$ (we allow the case where 
$\partial \alpha'_1$ and $\partial \alpha'_2$ (and hence $\partial \beta'_1$ and $\partial \beta'_2$) share a single point),  
\item
$\alpha'_1 \cup \alpha'_2 \cup \beta'_1 \cup \beta'_2$ is connected,  
\item
$\gamma$ is homotopic in $\Sigma$ to the concatenation $\alpha'_1 \ast \beta'_1 \ast \alpha'_2 \ast \beta'_2$, 
%after replacing $\beta'_1$ with $\beta'_2$ if necessary, 
where orientations are chosen in an appropriate way.  
\end{itemize}
%there exist subarcs $\alpha' \subset \alpha$ and $\beta' \subset \beta$ satisfying the following. 
%\begin{itemize}
%\item
%$\partial \alpha' = \partial \beta'$ (we allow case where $\alpha' = \alpha$ and $\beta' = \beta$ as above), 
%\item
%$\Int \alpha' \cap \Int \beta'$ is a single point, which separates $\alpha'$ into two arc $\alpha'_1$ and $\alpha'_2$, and 
%$\beta'$ into $\beta'_1$ and $\beta'_2$ with $\partial \alpha'1 = \partial \beta'1$ and $\partial \alpha'2 = \partial \beta'2$, and 
%\item
%$\gamma$ is homotopic in $\Sigma$ to the concatenation $\alpha'_1 \ast \beta'_2 \ast \alpha'_2 \ast \beta'_1$, 
%where orientations are chosen in an appropriate way.  
%\end{itemize}
We note that an $(\alpha, \beta)$-curve with at most $2$ corners are called a {\it bicorn curve} in 
Przytycki-Sisto \cite{PrzytyckiSisto17}. 
For any simple closed curves $\alpha$, $\beta$ in 
a non-sporadic surface $\Sigma$, 
let $\mathcal{L}_0 (\alpha, \beta)$ denote the full subgraph of $\mathcal{C}(\Sigma)$ spanned by 
the set of $(\alpha, \beta)$-curves with at most $4$ corners. 
For a positive number $h$, we set $R (h)  := m - 4h$, where $m > 0$ is the number defined by 
 $ 2h (6 + \log_2 (m+2) ) = m  $. 
 
The following proposition is a part of Proposition 3.1 of Bowditch \cite{Bowditch14} 
that is necessary for our arguments. 
\begin{prop}
\label{prop:Bowditch's quasiconvexity condition}
%For any constant $h > 0$, there exists a constant $k > 0$ satisfying the following property. 
Let $h > 0$ be a constant. 
Let $G$ be a connected graph equipped with the simplicial metric $d_G$. 
Suppose  that each pair $\{x, y\}$  $($possibly $x=y$$)$ of vertices of $G$ is associated with 
a connected subgraph $\mathcal{L} (x, y) \subset G$ with $x, y \in \mathcal{L} (x, y)$ satisfying the following.  
\begin{enumerate}
\item
For any vertices $x, y, z$ of $G$, it holds $\mathcal{L} (x, y) \subset N_h (\mathcal{L} (x, z) \cup \mathcal{L} (y, z) )$. 
\item
For any vertices $x, y$ of $G$ with $d_G (x, y) \leq 1$, the diameter of $\mathcal{L} (x, y)$ is at most $h$. 
\end{enumerate}
Then for any vertices $x, y$ of $G$, the Hausdorff distance in $G$ between $\mathcal{L} (x, y)$ and 
any geodesic segment $c$ in $G$ connecting $x$ and $y$ is 
at most $R (h)$. 
\end{prop}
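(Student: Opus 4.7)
The plan is to follow Bowditch's original strategy from his hyperbolicity criterion, which uses a dyadic subdivision of geodesics combined with a self-referential bootstrap to extract the uniform constant $R(h)$. Fix vertices $x, y \in G$, set $n = d_G(x, y)$, and let $c = (x = c_0, c_1, \ldots, c_n = y)$ be a geodesic from $x$ to $y$. I would establish both inclusions
\[
\mathcal{L}(x, y) \subset N_{R(h)}(c) \quad\text{and}\quad c \subset N_{R(h)}(\mathcal{L}(x, y))
\]
separately.

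For the forward inclusion, take an arbitrary $p \in \mathcal{L}(x, y)$. Applying condition (1) to the triple $(x, y, c_{\lfloor n/2 \rfloor})$ places $p$ within distance $h$ of some vertex $p_1 \in \mathcal{L}(x, c_{\lfloor n/2 \rfloor}) \cup \mathcal{L}(c_{\lfloor n/2 \rfloor}, y)$. Iterating this halving $k = \lceil \log_2(n+1) \rceil$ times yields a vertex in some $\mathcal{L}(c_i, c_j)$ with $d_G(c_i, c_j) \leq 1$; condition (2) then places that vertex within $h$ of $c_i \in c$. This gives the crude bound $d_G(p, c) \leq (k+1)h$, which grows logarithmically in $n$ and is not yet the uniform constant we seek.

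The key step is to upgrade this bound to the uniform $R(h)$. Following Bowditch, I would introduce the supremum $D$ of $d_G(p, c)$ over all triples $(x, y, p)$ with $p \in \mathcal{L}(x, y)$ and all geodesics $c$ from $x$ to $y$, and show by a self-referential comparison that $D \leq 2h(6 + \log_2(D + 2))$. The argument refines the naive halving by replacing, at each stage, the ambient sub-geodesic with a path built from pieces of $\mathcal{L}$ and geodesics realising the extremal distance $D$; this introduces a controlled additive error per level, and after roughly $\log_2(D+2)$ levels one has ``absorbed'' the excursion of $p$ from $c$. The defining equation $m = 2h(6 + \log_2(m+2))$ is exactly the fixed-point condition that rules out $D > m - 4h = R(h)$. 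The reverse inclusion $c \subset N_{R(h)}(\mathcal{L}(x, y))$ is then obtained by applying condition (1) to triples $(x, c_i, y)$: since sub-arcs of $c$ are again geodesics, the forward bound may be applied to each half to push the vertex $c_i$ into an $R(h)$-neighborhood of $\mathcal{L}(x, y)$.

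The main obstacle, unsurprisingly, is the bootstrap: the dyadic subdivision alone produces only a bound depending on $\log_2(n)$, and extracting a constant requires the self-referential estimate described above. Tracking the numerical constants (the factor $2$ and the offset $6$) calls for careful bookkeeping of the errors introduced at each stage and at the endpoints of sub-intervals; this bookkeeping is the delicate heart of the argument, and the function $R(h) = m - 4h$ is tuned precisely to capture the balance between the per-level additive error $h$ and the number $\log_2(D+2)$ of levels required.
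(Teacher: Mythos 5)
The paper offers no proof of this proposition: it is imported verbatim (with the explicit constant $R(h)$ as traced through by Vokes) from Proposition 3.1 of Bowditch's paper on uniform hyperbolicity of curve graphs, so the only meaningful comparison is with Bowditch's argument, and your outline does follow that argument's route — dyadic subdivision via condition (1) down to adjacent vertices where condition (2) applies, yielding a bound logarithmic in $d_G(x,y)$, followed by a self-referential upgrade that feeds the extremal excursion back into the subdivision to obtain an inequality of fixed-point type.

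That said, what you have written is a plan rather than a proof, and two of its load-bearing steps do not work as stated. First, the bootstrap: an inequality $D \le 2h\bigl(6+\log_2(D+2)\bigr)$ forces $D \le m$, where $m$ is the fixed point of $t \mapsto 2h(6+\log_2(t+2))$ — it does not rule out $D > m-4h$. In the source the quantity controlled by the bootstrap differs from the Hausdorff distance by an additive term of order $4h$ (endpoint corrections in the subdivision), and $R(h)=m-4h$ is extracted only after accounting for that offset; as written, your final step loses exactly the $4h$ that the definition of $R(h)$ is built around, and the "controlled additive error per level" producing the coefficients $2$ and $6$ is asserted but never computed, which is where all the work lies. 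Second, the reverse inclusion: applying condition (1) to the triple $(x,c_i,y)$ gives $\mathcal{L}(x,y) \subset N_h(\mathcal{L}(x,c_i) \cup \mathcal{L}(c_i,y))$, which constrains $\mathcal{L}(x,y)$ by the other two sets and says nothing directly about the distance from $c_i$ to $\mathcal{L}(x,y)$. The standard way to get $c \subset N(\mathcal{L}(x,y))$ is a connectedness argument: $\mathcal{L}(x,y)$ is a connected subgraph containing $x$ and $y$ and lying in $N_{R}(c)$ by the forward inclusion, so nearest-point projections to $c$ of consecutive vertices along a path in $\mathcal{L}(x,y)$ from $x$ to $y$ move by a bounded amount and must sweep past every $c_i$. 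You would need to supply that argument (and check that it stays within the same constant $R(h)$) to obtain the two-sided Hausdorff bound claimed in the proposition.
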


The following lemma is proved in %\cite[Claim Lemma 5.1.4]{Vokes18}. 
\cite[Lemma 5.1.4]{Vokes18}. 

\begin{lem}
\label{lem:constant hprime}
There exists a constant $h_1 > 0$ such that for any simple closed curves $\alpha$, $\beta$ in 
any non-sporadic surface $\Sigma$, 
$N_{h_1} (\mathcal{L}_0 (\alpha, \beta))$ is connected.   
 \end{lem}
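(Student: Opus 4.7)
The plan is to split the argument into two stages: first establishing connectivity among the bicorn curves (those with at most $2$ corners) together with $\alpha$ and $\beta$, and second showing that every $4$-corner curve lies within uniformly bounded distance of some bicorn.

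For the first stage, I would adapt the surgery argument of Przytycki--Sisto \cite{PrzytyckiSisto17}. Given two intersecting bicorns $\gamma = \alpha' \ast \beta'$ and $\gamma'' = \alpha'' \ast \beta''$, pick a transverse intersection point $p$ in the interior of, say, the $\alpha$-subarcs. Cutting both curves at $p$ and regluing the four resulting pieces in the alternative way produces two simple closed curves, at least one of which is again an $(\alpha,\beta)$-bicorn and has strictly smaller geometric intersection with either $\gamma$ or $\gamma''$. The non-sporadic hypothesis ensures that the resulting curves can be taken to be essential. Iterating this surgery yields a sequence of bicorns $\gamma = \gamma_0, \gamma_1, \dots, \gamma_N = \gamma''$ in which consecutive terms are disjoint, hence span an edge of $\mathcal{C}(\Sigma)$. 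This provides a path lying entirely inside $\mathcal{L}_0(\alpha, \beta)$, and in particular connects $\alpha$ to $\beta$ through bicorns.

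For the second stage, let $\gamma = \alpha_1' \ast \beta_1' \ast \alpha_2' \ast \beta_2'$ be a $4$-corner curve. Consider the closed curve $\delta$ obtained by retaining $\alpha_1'$ and replacing the three consecutive subarcs $\beta_1' \ast \alpha_2' \ast \beta_2'$ by a single subarc of $\beta$ joining the endpoints of $\alpha_1'$. After a small generic perturbation, $\delta$ is a simple closed curve of bicorn form, and because it is obtained from $\gamma$ by a local modification it intersects $\gamma$ in a uniformly bounded number of points; so if $\delta$ is essential it lies at distance at most $2$ from $\gamma$ in $\mathcal{C}(\Sigma)$. The main obstacle is the degenerate configuration in which every such pinching yields a non-essential curve. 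This is where the non-sporadic hypothesis $3g+m \geq 5$ enters essentially, providing enough complementary topology to perform an alternative surgery --- for instance, by pushing the pinch point past another intersection of $\alpha$ and $\beta$ --- so as to produce a nearby essential bicorn. Combining the two stages, one obtains a uniform constant $h_1$ (independent of $\Sigma$) such that $N_{h_1}(\mathcal{L}_0(\alpha, \beta))$ is connected.
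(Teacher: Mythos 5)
The paper does not actually prove this lemma; it quotes it from Vokes's thesis (\cite[Lemma 5.1.4]{Vokes18}), so your attempt has to stand on its own, and as written it does not. The central problem is in your first stage: you claim the surgery produces a sequence of bicorns in which consecutive terms are \emph{disjoint}, so that the path lies entirely inside $\mathcal{L}_0(\alpha,\beta)$. If that were true, $\mathcal{L}_0(\alpha,\beta)$ itself would be a connected subgraph of $\mathcal{C}(\Sigma)$ and the neighborhood $N_{h_1}$ would be superfluous -- but the whole reason the lemma is stated for $N_{h_1}(\mathcal{L}_0(\alpha,\beta))$ is that disjointness fails. In the Przytycki--Sisto-type surgery (extend the $\beta$-subarc of a bicorn $a'\ast b'$ past an endpoint to the first return to $a'$), the old and new bicorns live on an embedded theta-graph whose regular neighborhood can be a once-holed torus, in which case they intersect once; they are therefore only at \emph{bounded} distance in $\mathcal{C}(\Sigma)$, and the geodesic between them passes through curves (e.g.\ the boundary of that neighborhood) that are not $(\alpha,\beta)$-curves at all. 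Your description of the surgery is also off: two bicorns assembled from the fixed simple closed curves $\alpha$ and $\beta$ cannot meet transversally at an interior point of their $\alpha$-subarcs (two subarcs of the embedded curve $\alpha$ never cross), so the point $p$ you propose does not exist -- the genuine intersections occur at points of $\alpha\cap\beta$ -- and resolving a single crossing of two circles yields one immersed curve carrying all the remaining crossings as self-intersections, not ``two simple closed curves.''

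Stage two has independent gaps. The curve $\delta=\alpha_1'\ast(\text{subarc of }\beta)$ need not be embedded, since the chosen $\beta$-subarc may cross the interior of $\alpha_1'$ at points of $\alpha\cap\beta$; so $\delta$ need not be a bicorn, and the definition of a $2$-corner curve explicitly requires $\Int\alpha'\cap\Int\beta'=\emptyset$. Moreover ``obtained by a local modification'' does not bound $i(\gamma,\delta)$: the replacing $\beta$-subarc can cross $\alpha_1'$ and $\alpha_2'$ arbitrarily many times, so there is no uniform distance bound of the kind you assert. Finally, the degenerate case (all candidate replacements inessential or peripheral) is dismissed with a gesture at the non-sporadic hypothesis rather than an argument; in the actual proofs this case requires a genuine count showing that not every surgered curve can be trivial. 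The correct shape of the argument is to order the curves of $\mathcal{L}_0(\alpha,\beta)$ by a complexity, show that each non-extremal curve admits another curve of $\mathcal{L}_0(\alpha,\beta)$ of smaller complexity meeting it in a uniformly bounded number of points (hence at uniformly bounded $\mathcal{C}(\Sigma)$-distance, by the standard $i(a,b)\le k\Rightarrow d(a,b)\le 2+2\log_2 k$ estimate), and conclude connectivity of the $h_1$-neighborhood -- not of $\mathcal{L}_0(\alpha,\beta)$ itself.
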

We note that by the remark just before Lemma 5.1.12 in \cite{Vokes18} 
that the constant $h_1$ in the above lemma can be taken to be at most $7$. 

The next lemma is due to \cite[Lemma 5.1.5]{Vokes18}. 
\begin{lem}
\label{lem:constant h}
There exists a constant $h_2 > 0$ such that for any simple closed curves $\alpha$, $\beta$, $\gamma$ in 
any non-sporadic surface $\Sigma$, 
it holds $\mathcal{L}_0 (\alpha, \beta) \subset 
N_{h_2} (\mathcal{L}_0 (\alpha, \gamma) \cup \mathcal{L}_0 (\beta, \gamma))$.   
 \end{lem}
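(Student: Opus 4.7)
The strategy is to mimic the surgery/bicorn argument of Przytycki--Sisto, adapted to curves with up to four corners. Put $\alpha, \beta, \gamma, \delta$ simultaneously in pairwise minimal position, where $\delta$ is a fixed element of $\mathcal{L}_0(\alpha,\beta)$. If $\delta$ is disjoint from $\gamma$ (or isotopic to $\gamma$), then $d_{\mathcal{C}(\Sigma)}(\delta,\gamma)\le 1$ and $\gamma$ itself belongs to $\mathcal{L}_0(\alpha,\gamma)$ as a $0$-corner curve, so the conclusion holds with $h_2\ge 1$. Hence one may assume $\gamma\pitchfork\delta\neq\emptyset$. Decompose $\delta$ into its smooth \emph{edges}: the maximal subarcs contained in $\alpha$ or $\beta$, of which there are at most four, alternating between $\alpha$-edges and $\beta$-edges and separated by the (at most four) corner points of $\alpha\cap\beta$ lying on $\delta$.

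Traversing $\gamma$ cyclically, record the edge of $\delta$ on which each successive intersection point of $\gamma\cap\delta$ lies; this gives a cyclic word in an alphabet of size at most four. The core step is to find two consecutive intersections $p,q$ of $\gamma$ with $\delta$ that lie on a common edge $e$ of $\delta$; once such $p,q$ exist, letting $\gamma_0\subset\gamma$ be the subarc from $p$ to $q$ with interior disjoint from $\delta$, and $e_0\subset e$ the subarc of $e$ from $p$ to $q$, the smoothing of $\gamma_0\ast e_0$ at $p,q$ produces a simple closed curve $\epsilon$ which is an $(\alpha,\gamma)$- or $(\beta,\gamma)$-curve with $2$ corners, hence $\epsilon\in\mathcal{L}_0(\alpha,\gamma)\cup\mathcal{L}_0(\beta,\gamma)$; moreover $\epsilon$ may be pushed off $\delta$, so $d_{\mathcal{C}(\Sigma)}(\delta,\epsilon)\le 1$, provided $\epsilon$ is essential. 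Essentiality is verified via the minimal-position assumption, since a non-essential smoothing would produce a bigon with $\gamma$ or force a monogon. A small extra argument (choosing the other of the two smoothings, or replacing $\epsilon$ by one of its essential components) handles the peripheral case and contributes only a bounded increment to $h_2$.

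The remaining and most delicate case is when no two consecutive intersections of $\gamma$ on $\delta$ share an edge, i.e.\ the cyclic word never repeats a letter at consecutive positions. Here I would use a longer subarc $\gamma_0$ of $\gamma$ containing several intersection points, paired with a subarc $\delta_0$ of $\delta$ that passes through at most two corners of $\delta$; the resulting smoothed curve then has at most four corners coming from $\alpha\cap\gamma$ or $\beta\cap\gamma$. The constraint that the alphabet has size at most four forces, by a short combinatorial case analysis, the existence of such a subarc $\gamma_0$ whose endpoints both lie on edges of the \emph{same} type (both $\alpha$-edges or both $\beta$-edges); the corresponding surgery then gives a curve in $\mathcal{L}_0(\alpha,\gamma)\cup\mathcal{L}_0(\beta,\gamma)$ of distance at most a bounded constant from $\delta$ (the distance bound coming from the number of times the new curve can be made to cross $\delta$, which is controlled by how many edges $\delta_0$ traverses).

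The principal obstacle is precisely this last case: tracking \emph{corners} rather than just intersection points, and showing that one can always extract, from the cyclic word on at most four letters, a subarc of $\gamma$ together with a subarc of $\delta$ whose smoothing yields a curve of the right type (only $\alpha$ and $\gamma$, or only $\beta$ and $\gamma$) with at most four corners. This combinatorial pigeonhole is what produces a universal $h_2$ independent of the topological type of $\Sigma$; the value of $h_2$ depends only on the maximum corner count ($4$) and on a uniform bound for the number of surgeries one may have to iterate before the resulting curve becomes essential. Standard choice-of-smoothing arguments give $h_2\le 7$ or so, compatible with the $h_1\le 7$ bound quoted for Lemma~\ref{lem:constant hprime}.
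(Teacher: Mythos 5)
You should first be aware that the paper does not actually prove this lemma: it is quoted verbatim from \cite[Lemma 5.1.5]{Vokes18}, with the quantitative bound $h_2\le 18$ taken from \cite[Lemma 5.1.12]{Vokes18}. So there is no in-paper proof to match; your sketch has to be judged against the Przytycki--Sisto/Vokes surgery argument it is (correctly) modelled on. Your reductions in the easy cases are sound: if $\gamma$ is disjoint from $\delta$ then $\gamma\in\mathcal{L}_0(\alpha,\gamma)$ as a $0$-corner curve and $d_{\mathcal{C}(\Sigma)}(\delta,\gamma)\le 1$; and if two consecutive points of $\gamma\cap\delta$ lie on a common edge $e\subset\alpha$ (say), then $\gamma_0\cup e_0$ is a genuine $2$-corner $(\alpha,\gamma)$-curve whose essentiality follows from the no-bigon criterion for minimal position, and which can be pushed off $\delta$.

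The genuine gap is in the case you yourself flag as "the most delicate": when no two consecutive intersections of $\gamma$ with $\delta$ lie on a common edge. Here your plan is to surger along a subarc $\delta_0$ of $\delta$ that "passes through at most two corners of $\delta$" --- but any subarc of $\delta$ that crosses a corner contains subarcs of \emph{both} $\alpha$ and $\beta$, so the smoothed curve $\gamma_0\ast\delta_0$ is built from pieces of three curves and lies in neither $\mathcal{L}_0(\alpha,\gamma)$ nor $\mathcal{L}_0(\beta,\gamma)$. Avoiding this --- i.e.\ extracting from the cyclic intersection pattern a surgery whose $\delta$-side uses arcs of only one of $\alpha,\beta$, while keeping the corner count of the output at most $4$ and the output at bounded $\mathcal{C}(\Sigma)$-distance from $\delta$ --- is precisely the content of the lemma and the reason the class $\mathcal{L}_0$ is enlarged from bicorns to four-corner curves in the first place. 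You defer this to "a short combinatorial case analysis" that is never carried out, so the proof is incomplete at its crux. Two smaller symptoms of the missing bookkeeping: your claimed bound $h_2\le 7$ is inconsistent with the bound $18$ recorded in the paper, and the "bounded increment" for discarding peripheral or repeated surgeries is asserted rather than derived. To repair the argument you would need to write out the pigeonhole on the cyclic word (at most four letters, no immediate repetitions) and exhibit, in each resulting configuration, an explicit pair $(\gamma_0,\delta_0)$ with $\delta_0$ contained in $\alpha$ alone or in $\beta$ alone, together with an explicit bound on $i(\gamma_0\cup\delta_0,\delta)$.
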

By \cite[Lemma 5.1.12]{Vokes18}, 
the constant $h_2$ in the above lemma can be taken to be at most $18$. 
For any simple closed curves $\alpha$, $\beta$ in 
a non-sporadic surface $\Sigma$, 
set $\mathcal{L} (\alpha, \beta) := N_{h_1} (\mathcal{L}_0 (\alpha, \beta))$. 
By \cite[Claim 10.4.2]{Vokes19}, for the constant $h_0 := 2 h_1 + h_2$, which can be taken to be at most 
$2 \cdot 7 +18 = 32$, the curve graph $\mathcal{C} (\Sigma)$ endowed with the associated subgraphs  
$\mathcal{L} (\alpha, \beta) $ for $\alpha, \beta \in \mathcal{C}^{(0)} (\Sigma)$ satisfies the 
condition of Proposition \ref{prop:Bowditch's quasiconvexity condition}. 
Using this fact, \cite[Lemma 10.4.4]{Vokes19} shows the following. 
\footnote{Precisely speaking, \cite[Lemma 10.4.4]{Vokes19} considers only the case of 
bicorn curves  in a closed orientable surface instead of 
curves with at most four corners in a non-sporadic orientable surface (possibly with boundary). 
Lemma \ref{lem:constant 2R plus 2}, however, can be proved in exactly the same way.} 

\begin{lem}
\label{lem:constant 2R plus 2}
Let $\alpha$, $\beta$ be simple closed curves in a non-sporadic surface $\Sigma$, 
$P$ a path in $\mathcal{L} (\alpha, \beta)$ from $\alpha$ and $\beta$. 
Then any geodesic segment in $\mathcal{C} (\Sigma)$ connecting $\alpha$ and $\beta$ lies in 
the $(2R(h_0) + 2)$-neighborhood of $P$.  
\end{lem}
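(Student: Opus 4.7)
The plan is to upgrade the Hausdorff bound given by Bowditch's criterion into a pointwise bound with the specific path $P$. First, I would apply Proposition~\ref{prop:Bowditch's quasiconvexity condition} to the already-verified assignment $(\alpha,\beta)\mapsto\mathcal{L}(\alpha,\beta)$ with $h=h_0$, which yields that any geodesic segment $c$ in $\mathcal{C}(\Sigma)$ joining $\alpha$ and $\beta$ has Hausdorff distance at most $R(h_0)$ from $\mathcal{L}(\alpha,\beta)$. In particular, every vertex of $c$ lies within $R(h_0)$ of some vertex of $\mathcal{L}(\alpha,\beta)$. The remaining task is to replace ``some vertex of $\mathcal{L}(\alpha,\beta)$'' by ``some vertex of $P$'' in this bound.

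The key step is a discrete intermediate value argument. I would label the consecutive vertices of $c$ as $v_0=\alpha,v_1,\ldots,v_m=\beta$ so that $d_{\mathcal{C}(\Sigma)}(\alpha,v_i)=i$, and those of $P$ as $u_0=\alpha,u_1,\ldots,u_n=\beta$, and then consider the integer-valued function $g(j):=d_{\mathcal{C}(\Sigma)}(\alpha,u_j)$. Consecutive vertices $u_j$ and $u_{j+1}$ are adjacent in $\mathcal{C}(\Sigma)$, so $|g(j+1)-g(j)|\le 1$; moreover $g(0)=0$ and $g(n)=m$ since $c$ is a geodesic between $\alpha$ and $\beta$. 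By the discrete intermediate value theorem, every integer $i\in\{0,\ldots,m\}$ is realized as $g(j(i))$ for some index $j(i)$, giving a vertex $u_{j(i)}\in P$ with $d_{\mathcal{C}(\Sigma)}(\alpha,u_{j(i)})=i$.

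To finish, I would fix any $v_i \in c$ and pair it with the corresponding $u_{j(i)} \in P$ produced above. Since $u_{j(i)}\in\mathcal{L}(\alpha,\beta)$, the Hausdorff bound provides some $v_{i'}\in c$ with $d(u_{j(i)},v_{i'})\le R(h_0)$, whence
\[
|i-i'|=|d(\alpha,u_{j(i)})-d(\alpha,v_{i'})|\le d(u_{j(i)},v_{i'})\le R(h_0),
\]
and therefore $d(v_i,u_{j(i)})\le d(v_i,v_{i'})+d(v_{i'},u_{j(i)})\le 2R(h_0)\le 2R(h_0)+2$. This shows that every vertex of $c$ lies in the $(2R(h_0)+2)$-neighborhood of $P$, which is the desired conclusion. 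The only genuine obstacle I anticipate is conceptual: Bowditch's estimate sees only the subgraph $\mathcal{L}(\alpha,\beta)$ as a whole, not $P$, so one must produce a concrete witness on $P$, and using the distance-from-$\alpha$ function is what makes this possible. The small slack ``$+2$'' in the statement presumably absorbs a minor bookkeeping issue (for instance, a vertex-versus-midpoint parameterization), but the essential estimate is the one above.
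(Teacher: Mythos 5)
Your proof is correct: applying Proposition~\ref{prop:Bowditch's quasiconvexity condition} with $h=h_0$ to get the Hausdorff bound $R(h_0)$ between the geodesic and $\mathcal{L}(\alpha,\beta)$, and then running the discrete intermediate-value argument on $j\mapsto d_{\mathcal{C}(\Sigma)}(\alpha,u_j)$ to produce a witness on $P$ itself, is exactly the right mechanism, and the ``$+2$'' slack does indeed absorb the vertex-versus-edge-point bookkeeping in the Hausdorff estimate (a nearest point of $c$ to $u_{j(i)}$ need not be a vertex, which costs at most $1$ in each of the two triangle-inequality steps). The paper gives no proof of its own here, deferring entirely to Vokes's Lemma~10.4.4 (with a footnote noting the argument transfers verbatim), and that argument is of the same form, so your write-up essentially supplies the details the paper omits.
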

We note that the minimum integer greater than or equal to $R ( h_0 )$ is $897$. 
Therefore, the above constant $2R(h_0) + 2$ can be taken to be at most $1796$.

Now, we quickly review the notion of the disk surgery. 
Let $D$ and $E$ be properly embedded disks in a $3$-manifold $M$ intersecting transversely and minimally. 
Suppose that $D \cap E \neq \emptyset$. 
 Let $E'$ be an outermost subdisk of $E$ cut off by $D \cap E$. 
 The arc $\partial E' \cap D$ cuts $D$ into two subdisks. 
 Choose one $D'$ of them. 
 Set $D_1 := D' \cup E'$. 
By a slight isotopy, the disk $D_1$ can be moved to be disjoint from $D \cup E'$.  
We call $D_1$ a {\it disk obtained by a surgery on D along $E$} (with respect to $E'$). 
An operation to obtain $D_1$ from $D$ in the above way is called a {\it disk surgery}. 
We note that the number $\#(D' \cap E)$ of components of $D' \cap E$ is less than $\#(D \cap E)$. 
Therefore, applying disk surgeries repeatedly, we obtain a finite sequence 
$D = D_0, D_1, D_2, \ldots, D_k = E$ of disks in $M$ such that 
$D_i \cap D_{i+1} = \emptyset$ for $i= 0, 1, \ldots , k-1$. 

\begin{proof}[Proof of Theorem~$\ref{thm:convexity}$]
Let $(L; S)$ be an $n$-bridge decomposition of a link $L$ in $S^3$ with $n \geq 2$. 
Let $\alpha$ and $\beta$ be arbitrary points in $\mathcal{D}^{+}$. 
%Let $c$ be a geodesic segment in $\mathcal{C}(S_{L})$ connecting $\alpha$ and $\beta$. 
Let $\{\gamma_{i}\}_{0 \le i \le s}$ be a sequence of vertices of $\mathcal{D}^{+}$   
obtained by disk surgery such that $\alpha=\gamma_{0}$ and $\beta=\gamma_{s}$.  
%By definition we have $\{\gamma_{i}\}_{0 \le i \le s} \subset \mathcal{L} (\alpha, \beta)$. 
By definition the path $P$ in $\mathcal{C} (S_L)$ corresponding to this sequence 
lies within the set $\mathcal{L} (\alpha, \beta)$. 
It follows from Lemma \ref{lem:constant 2R plus 2} 
that any geodesic segment $c$ in $\mathcal{C}(S_{L})$ connecting $\alpha$ and $\beta$ 
lies within the $(2R(h_0) + 2)$-neighborhood of $P$. 
Since $\{\gamma_{i}\}_{0 \le i \le s} \subset \mathcal{D}^+$, the geodesic segment 
$c$ lies within the $(2R(h_0) + 2)$-neighborhood of $\mathcal{D}^+$, which implies the assertion. 
The argument for $\mathcal{D}^-$ is of course the same. 
%By Masur-Minsky  \cite[Theorem 1.2]{MasurMinsky04}, 
%$\{\gamma_{i}\}_{0 \le i \le s}$ is a $K$-quasiconvex subset of $\mathcal{C}(S_{L})$.  
%Further, this constant $K$ is independent of the topological type of $S_{L}$ by \cite{Hamenstadt18}.  
%It follows that $c$ lies within the $K$-neighborhood of $\{\gamma_{i}\}_{0 \le i \le s}$. 
%Therefore $c$ lies within the $K$-neighborhood of $\mathcal{D}^{+}$,   
%and the proof is complete. 
\end{proof}

\subsection{Braid groups} 
\label{subsection_braidgroups}

%We review the planar/spherical braid groups. 
%see \cite{Birman74,MurasugiKurpita99} for the study of braid groups. 
Let $B_n$ be the (planar) braid group with $n$ strands. 
The group $B_n$ is generated by braids 
$\sigma_1, \sigma_2, \ldots, \sigma_{n-1}$ 
%$\sigma_1, \sigma_2, \cdots, \sigma_{n-1}$ 
as shown in Figure \ref{fig_halftwist}(1). 
%We depict braids vertically in this paper. 
The product of braids is defined as follows. 
Given $b, b' \in B_n$, 
we stuck $b$ on $b'$, and concatenate the bottom  of $b$ 
with the top of $b'$. 
The product $bb' \in B_n$ is the resulting  braid, see Figure \ref{fig_halftwist}(2). 

\begin{center}
\begin{figure}[t]
\includegraphics[height=2.6cm]{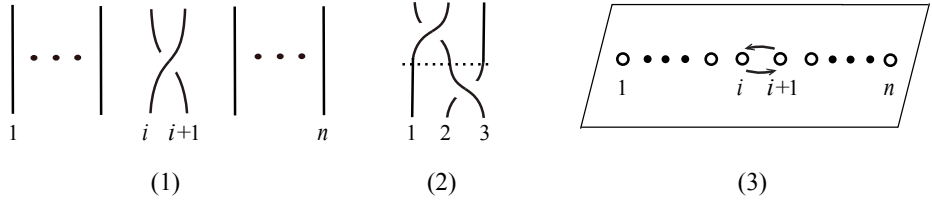}
\caption{(1) The generator $\sigma_i$ in the spherical/planar braid group. 
(2) The $3$-braid $\sigma_1 \sigma_2^{-1}$. 
(3) The half twist $h_i$ in the mapping class group on the sphere/disk with marked points.} 
\label{fig_halftwist}
\end{figure}
\end{center}

We set $\delta_j = \sigma_1 \sigma_2 \cdots \sigma_{j-1} \in B_n$. 
The {\it half twist} $\Delta  \in B_n$ is given by 
$$\Delta = \delta_n \delta_{n-1} \cdots \delta_2.$$ 
The second  power  $\Delta^2$ is called the {\it full twist}.

Let $ \SB_{n}$ be the spherical braid group with $n$ strands. 
By abusing notation, we still denote by $\sigma_i$, 
 the spherical braid as shown in Figure \ref{fig_halftwist}(1). 
We define the product of spherical braids in the same manner as above.

We recall connections between the planar/spherical braid groups and the mapping class groups 
on the disk/sphere with marked points. 
For more details, see \cite{Birman74}. 
Let $D_n$ be the disk with $n$ marked points. 
%The mapping class group $\MCG (D_n)$ is 
%the group of isotopy classes of orientation-preserving self-homeomorphisms of $D_n$ 
%preserving  the $n$ marked points setwise and preserving 
%the boundary $\partial D$  setwise. 
%By Remark \ref{rem_convention1} together with the definition of the product in $B_n$, 
Then we have the surjective homomorphism 
$$\Gamma_D: B_n \rightarrow \MCG (D_n)$$
which sends each generator $\sigma_i$ to the right-handed half twist $h_i$ 
between the $i$th and $(i+1)$th marked points, see Figure \ref{fig_halftwist}(3). 
The kernel of $\Gamma_D$ is an infinite cyclic group generated by the full twist $\Delta^2 $, 
that is, 
$\mathrm{ker}\,\Gamma_D = \langle \Delta^2 \rangle$. 
Thus,  we have  
$$B_{n} / \langle \Delta^2 \rangle \cong \MCG (D_n).$$

We also have the surjective homomorphism 
$$\Gamma: \SB_n \rightarrow \MCG (\Sigma_{0,n}),$$ 
%$\SB_n $ to $ \MCG (\Sigma_{0,n})$
sending each generator $\sigma_i$ to the right-handed  half twist $h_i$ 
between the $i$th and $(i+1)$th marked points in the sphere. 
The kernel of $\Gamma$ is isomorphic to ${\Bbb Z}/{2{\Bbb Z}}$ 
which is generated by the full twist $\Delta^2 \in \SB_n$. 
Thus we have 
$$\SB_{n} / \langle \Delta^2 \rangle \cong \MCG (\Sigma_{0,n}).$$

\subsection{Wicket groups on trivial tangles}
\label{subsection_wicket-groups}

%In this section, we derive subgroups in $\SW_ {2n}$ from trivial $n$-tangles. 
%First of all, we consider the action of $\SB_{2n}$ on $n$-tangles in the $3$-balls $B^{\pm}$. 
Let $\mathcal{T}$ be an $n$-tangle in $B^-$ and 
let $b \in \SB_{2n}$. 
We stuck $b$ on $\mathcal{T}$ concatenating the bottom endpoints of $b$ with the endpoints  $\partial {\mathcal{T}}$. 
Then we obtain an $n$-tangle ${}^{b}\mathcal{T}$ in $B^-$, 
see Figure \ref{fig_wicket2}(1). 
We may assume that 
${}^{b}\mathcal{T}$ share the common endpoints as ${}\mathcal{T}$. 
Observe that 
$\SB_ {2n}$ acts on $n$-tangles in $B^-$ from the left: 
$${}^{b'  b}\mathcal{T} = {}^{b'}({}^{b}\mathcal{T}) 
\hspace{2mm}
\mbox{for}\  b, b' \in \SB_{2n}.$$

\begin{center}
\begin{figure}[t]
\includegraphics[height=3.8cm]{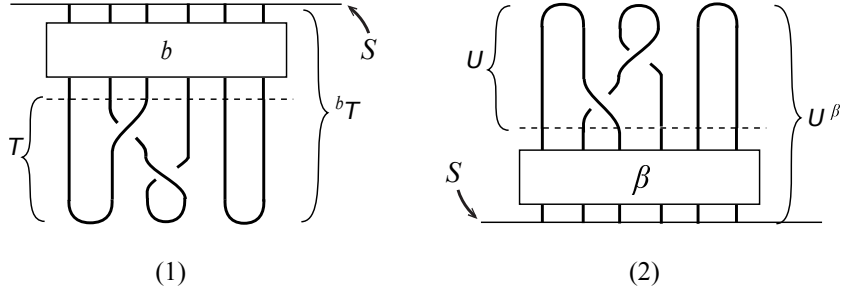}
\caption{
(1) 
The tangle ${}^{b}\mathcal{T}$ in the $3$-ball $B^{-}$, where 
$\mathcal{T} = {}^{\sigma_2 \sigma_3^{-1}} \mathcal{A}$. 
(2) 
The tangle $ \mathcal{U}^{\beta} $ in the $3$-ball $B^{+}$, where 
$\mathcal{U} = \Bar{\mathcal{A}}^{\sigma_3 \sigma_2^{-1}}$.} 
\label{fig_wicket2}
\end{figure}
\end{center}

Similarly, given an $n$-tangle $ \mathcal{U}$ in $B^+$ and a braid $\beta \in \SB_{2n}$, 
we obtain an $n$-tangle $\mathcal{U}^{\beta}$ in $B^+$, see Figure \ref{fig_wicket2}(2). 
Then $\SB_ {2n}$ acts on $n$-tangles in $B^+$ from the right: 
$$\mathcal{U}^{\beta'  \beta} = (\mathcal{U}^{\beta'})^{\beta} \hspace{2mm} \mbox{for} \ \beta, \beta' \in \SB_{2n}.$$

Recall the involution 
$\rho: S^3 \rightarrow S^3$ defined in Section 
\ref{subsection_Bridge-decompositions}. 
Let $\mathcal{A}$ and $\Bar{\mathcal{A}}$ be $n$-tangles in $B^-$ and $B^+$ respectively as before. 
By definition of $\rho$, we have $\rho (x, y, z) = (x, y, -z)$ for $(x, y, z) \in \R^3$. This implies that
\begin{eqnarray*}
\rho({}^b \mathcal{A})&=&\Bar{\mathcal{A}}^{b^{-1}}, 
\\
\rho( \Bar{\mathcal{A}}^{\beta}) &=& {}^{\beta^{-1}} \mathcal{A}. 
\end{eqnarray*}
For example, $\rho$ interchanges ${}^{\sigma_2 \sigma_3^{-1}} \mathcal{A}$ with 
$\Bar{\mathcal{A}}^{\sigma_3 \sigma_2^{-1}}$.

\begin{remark}
\label{rem_extension}
%By abusing notation, we denote the restriction $\rho|_{B^\pm} : B^\pm \to B^\pm$ by 
%$\rho$ for simplicity. 
Given  $\phi \in \MCG (\Sigma_{0,2n})$, 
there is a braid $b_{\phi} \in \SB_{2n}$ such that 
$\Gamma(b_{\phi}) = \phi$. 
%($b_{\phi}$ is unique up to full twists.) 
Take any representative $f: \Sigma_{0,2n} \rightarrow \Sigma_{0,2n}$ of $\phi$. 
We regard $f$ as an orientation-preserving homeomorphism on the sphere 
$S= \partial B^- = \partial B^+$ 
with $2n$ marked points. 
Let $\Phi: B^- \rightarrow B^-$ be an extension of $f$, that is, 
$\Phi |_{\partial B^-} = f$. 
Let $ \mathcal{A}$ be the standard $n$-tangle in $B^{-}$. 
Assume that $\partial \mathcal{A} \subset S$ equals the set of $2n$ marked points. 
Then $\Phi (\mathcal{A}) $ is an $n$-tangle in $B^-$.  
It follows that 
$$\Phi (\mathcal{A}) = {}^{b_{\phi}} \mathcal{A}. $$
for we equip $S$ with the orientation induced by that of $B^-$. 
Let us turn to the homeomorphism $\rho \Phi \rho |_{B^+}: B^+ \rightarrow B^+$. 
Since $\rho|_S=  \id_S$, 
it holds $\rho \Phi \rho |_{\partial B^+} = f$. 
Hence  $\rho \Phi \rho |_{B^+} : B^+ \rightarrow B^+$  
is an extension of $f$ over $B^+$. 
Then we have 
$$\rho \Phi \rho(\Bar{\mathcal{A}}) = \rho \Phi (\mathcal{A}) 
= \rho ({}^{b_{\phi}} \mathcal{A}) = \Bar{\mathcal{A}}^{b_{\phi}^{-1}}.$$
\end{remark}

From the above discussion, it is easy to see the following lemma.

\begin{lem} 
\label{lem_trivial-tangle}
Let $\mathcal{T}$ be an $n$-tangle in $B^-$, and let 
$\mathcal{U}$ be an $n$-tangle in $B^+$. 
\begin{enumerate}
\item[(1)] 
$\mathcal{T}$  is trivial if and only if 
$\mathcal{T} = {}^b \mathcal{A}$ for some $b \in \SB_{2n}$.

\item[(2)] 
$ \mathcal{U}$ is trivial if and only if 
$\mathcal{U} = \Bar{\mathcal{A}}^{\beta}$ for some $\beta \in \SB_{2n}$.  
\end{enumerate}
\end{lem}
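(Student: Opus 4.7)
The plan is to derive both parts from Remark~\ref{rem_extension} combined with the surjectivity of $\Gamma:\SB_{2n}\to\MCG(\Sigma_{0,2n})$ recorded in Section~\ref{subsection_braidgroups}.  The guiding observation is that the equivalence class (rel.\ boundary) of the tangle $\Phi(\mathcal{A})$ under an orientation-preserving extension $\Phi:B^-\to B^-$ depends only on the boundary mapping class $[\Phi|_S]\in\MCG(\Sigma_{0,2n})$, and the analogous statement holds in $B^+$ with $\Bar{\mathcal{A}}$ in place of $\mathcal{A}$.

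For the $(\Leftarrow)$ direction of (1), the very construction of ${}^b\mathcal{A}$ furnishes an orientation-preserving homeomorphism $F_b:B^-\to B^-$ (the identity off a collar of $S$, acting as the braid $b$ inside that collar) with $F_b(\mathcal{A})={}^b\mathcal{A}$; given a tangle equivalence $g:B^-\to B^-$ between ${}^b\mathcal{A}$ and $\mathcal{T}$, the composition $g\circ F_b$ is an orientation-preserving self-homeomorphism of $B^-$ sending $\mathcal{A}$ to $\mathcal{T}$, so $\mathcal{T}$ is trivial.  For the $(\Rightarrow)$ direction, I would fix an orientation-preserving $f:B^-\to B^-$ with $f(\mathcal{A})=\mathcal{T}$, set $\phi:=[f|_S]\in\MCG(\Sigma_{0,2n})$, and use the surjectivity of $\Gamma$ to pick $b\in\SB_{2n}$ with $\Gamma(b)=\phi$; since $f$ is itself an extension over $B^-$ of the representative $f|_S$ of $\phi$, Remark~\ref{rem_extension} directly yields $\mathcal{T}=f(\mathcal{A})={}^b\mathcal{A}$.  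Part (2) is then handled by the same scheme applied in $B^+$: starting from an orientation-preserving $f:B^+\to B^+$ with $f(\Bar{\mathcal{A}})=\mathcal{U}$ and choosing $b\in\SB_{2n}$ with $\Gamma(b)=[f|_S]$, the conjugate extension $\rho\Phi\rho|_{B^+}:B^+\to B^+$ considered in Remark~\ref{rem_extension} is orientation-preserving (the composition of two orientation-reversing restrictions of $\rho$ with an orientation-preserving $\Phi$) and satisfies $(\rho\Phi\rho)(\Bar{\mathcal{A}})=\Bar{\mathcal{A}}^{b^{-1}}$, so $\mathcal{U}=\Bar{\mathcal{A}}^{\beta}$ with $\beta:=b^{-1}$; the reverse implication mirrors that of~(1).

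The main technical point already folded into Remark~\ref{rem_extension}, and the step I expect to be the only real obstacle, is the well-definedness assertion underlying both parts: any two orientation-preserving extensions $\Phi_1,\Phi_2:B^-\to B^-$ of representatives of the same $\phi\in\MCG(\Sigma_{0,2n})$ send $\mathcal{A}$ to equivalent tangles (and similarly in $B^+$).  Via the composition $\Phi_2^{-1}\circ\Phi_1$, this reduces to the claim that an orientation-preserving self-homeomorphism of $B^-$ whose boundary restriction is isotopic rel.\ the $2n$ marked points to $\id_S$ carries $\mathcal{A}$ to a tangle equivalent to $\mathcal{A}$; I would prove this by absorbing the boundary isotopy into a collar of $S$ and then applying the Alexander trick to the resulting self-homeomorphism of $B^-$ that fixes $S$ pointwise.
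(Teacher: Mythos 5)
Your proof is correct and takes essentially the route the paper intends: the paper gives no written proof of this lemma beyond ``From the above discussion, it is easy to see\dots'', and that discussion is precisely Remark~\ref{rem_extension} together with the surjectivity of $\Gamma: \SB_{2n} \to \MCG(\Sigma_{0,2n})$, which is exactly what you invoke. Your closing paragraph on well-definedness (absorbing the boundary isotopy into a collar so that the resulting self-homeomorphism fixes $\partial B^\pm$ pointwise and hence is itself a tangle equivalence) correctly supplies the one detail the paper leaves implicit.
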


%\begin{proof}
%(1) 
%Suppose that $\mathcal{T} = {}^b \mathcal{A}$. 
%Let $f: \Sigma_{0,2n} \rightarrow \Sigma_{0,2n}$ be an orientation-preserving homeomorphism 
%which represents $\Gamma(b) \in \MCG (\Sigma_{0,2n})$. 
%Then $f$ extends to an orientation-preserving self-homeomorphism on the $3$-ball $B$ 
%sending $\mathcal{A}$ to $ \mathcal{T} = {}^b \mathcal{A}$. 
%Thus $ \mathcal{T}$ is a trivial tangle in $B$. 
%
%Suppose that $ \mathcal{T} $ is a trivial tangle in $B$. 
%By definition, 
%there exists an orientation-preserving homeomorphism 
%$f: B \rightarrow B$ sending $\mathcal{A}$ to $\mathcal{T}$. 
%We consider a braid $b \in \SB_{2n}$ such that 
%$\Gamma(b)= [f] \in  \MCG (\Sigma_{0,2n})$. 
%Then we have 
%$\mathcal{T} = {}^b \mathcal{A}$. 
%\medskip
%\\
%(2) 
%Clearly $\mathcal{T}$ is a trivial $n$-tangle in $B$ if and only if $\rho(\mathcal{T}) $ is a trivial $n$-tangle in $D$. 
%The claim (1) together with  
%$ \rho( {}^b \mathcal{A}) =  \Bar{\mathcal{A}}^{b^{-1}}$ shows (2). 
%\end{proof}

For a trivial $n$-tangle $ \mathcal{T}$ in $B^-$, 
we define a subgroup $\SW_{2n}(\mathcal{T}) \subset \SB_{2n}$ as follows. 
$$\SW_{2n} (\mathcal{T}) =
\{ b \in \SB_{2n} \, | \, ^{b}\mathcal{T}=\mathcal{T}\}.$$
Since $\Delta^2 \in \SW_{2n}(\mathcal{T})$, we have 
$$\mathrm{ker}\,\Gamma = \langle \Delta^2 \rangle \subset \SW_{2n}(\mathcal{T}).$$
The group  $\SW_{2n}(\mathcal{A})$ for the standard $n$-tangle $\mathcal{A}$  is called the {\it wicket group}. 
We write $$\SW_{2n}= \SW_{2n}(\mathcal{A}).$$
See Brendle-Hatcher \cite{BrendleHatcher13} for more study on the wicket groups. 

Since the map 
$$\MCG  (B^-, \mathcal{A}) \rightarrow 
\MCG  (\partial B^-, \partial \mathcal{A})$$ 
sending $[f] \in \MCG  (B^-, \mathcal{A})$ to 
$[ f |_{\partial B^-} ] \in \MCG  (\partial B^-, \partial \mathcal{A})$ 
%$[f|_{\partial B]  \in \MCG  (\partial B^-, \partial \mathcal{A})$ 
is injective, we regard  $\MCG  (B^- , \mathcal{A}) $ as a subgroup of $\MCG  (\partial B^-, \partial \mathcal{A})$ ($= \MCG  (\Sigma_{0, 2n})$). 
The following theorem is proved in \cite[Theorem~2.6]{HiroseKin17}. 

%\begin{thm}[Theorem~2.6 in \cite{HiroseKin17}]
\begin{thm}
\label{thm:Kin-Hirose Theorem 2.6}
It holds $ \Gamma  (\SW_{2n} ) = \MCG  (B^-, \mathcal{A})$. 
Thus, we have 
$\SW_{2n} / \langle \Delta^2 \rangle \cong \MCG  (B^-, \mathcal{A})$. 
\end{thm}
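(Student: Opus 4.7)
The plan is to prove the two inclusions $\Gamma(\SW_{2n}) \subseteq \MCG(B^-, \mathcal{A})$ and $\MCG(B^-, \mathcal{A}) \subseteq \Gamma(\SW_{2n})$ under the identification $\MCG(B^-, \mathcal{A}) \hookrightarrow \MCG(\Sigma_{0, 2n})$ via restriction to the boundary sphere. The quotient isomorphism then follows from the first isomorphism theorem together with the fact already observed in the paper that $\ker \Gamma = \langle \Delta^2 \rangle \subseteq \SW_{2n}$.

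The main tool is Remark \ref{rem_extension}: if $\phi \in \MCG(\Sigma_{0,2n})$ and $b_\phi \in \SB_{2n}$ is any lift with $\Gamma(b_\phi) = \phi$, then for any homeomorphism $f$ of $S = \partial B^-$ representing $\phi$ and any extension $\Phi \colon B^- \to B^-$ of $f$, one has $\Phi(\mathcal{A}) = {}^{b_\phi} \mathcal{A}$ as tangles (in particular, as subsets of $B^-$ with the same endpoints).

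For the first inclusion, given $b \in \SW_{2n}$, set $\phi := \Gamma(b)$ and choose a representative $f$ of $\phi$ together with an extension $\Phi$ as above, so that $\Phi(\mathcal{A}) = {}^{b} \mathcal{A}$. By definition of $\SW_{2n}$, the tangle ${}^b\mathcal{A}$ is equivalent to $\mathcal{A}$, so there exists an orientation-preserving self-homeomorphism $g$ of $B^-$ with $g|_{\partial B^-} = \id$ and $g({}^b \mathcal{A}) = \mathcal{A}$. Then $g \circ \Phi$ preserves $\mathcal{A}$ setwise and still restricts to $f$ on $\partial B^-$, so its class lies in $\MCG(B^-, \mathcal{A})$ and maps to $\phi$ under restriction. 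For the reverse inclusion, take $\psi \in \MCG(B^-, \mathcal{A})$ represented by $\Psi \colon B^- \to B^-$ with $\Psi(\mathcal{A}) = \mathcal{A}$ (setwise). Apply Remark \ref{rem_extension} with $f := \Psi|_{\partial B^-}$ and $\Phi := \Psi$: there exists $b_\psi \in \SB_{2n}$ with $\Gamma(b_\psi) = \psi$ and $\Psi(\mathcal{A}) = {}^{b_\psi} \mathcal{A}$. Since $\Psi(\mathcal{A}) = \mathcal{A}$ as subsets (and the identity map of $B^-$ witnesses tangle equivalence), we conclude ${}^{b_\psi} \mathcal{A} = \mathcal{A}$, i.e., $b_\psi \in \SW_{2n}$, so $\psi = \Gamma(b_\psi) \in \Gamma(\SW_{2n})$.

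The only subtle point, and what I expect to be the main potential pitfall, is carefully distinguishing two notions of sameness for tangles: setwise equality $\Psi(\mathcal{A}) = \mathcal{A}$ arising from membership in $\MCG(B^-, \mathcal{A})$ versus the tangle-equivalence relation defining $\SW_{2n}$, which demands a homeomorphism that is the identity on $\partial B^-$. Going from $\SW_{2n}$ into $\MCG(B^-, \mathcal{A})$ requires absorbing a boundary-identity correction $g$ into the extension $\Phi$, while going the other way is unproblematic because set equality trivially implies equivalence via the identity. Once this bookkeeping is made explicit, the proof reduces to two direct applications of Remark \ref{rem_extension} and a standard appeal to the first isomorphism theorem.
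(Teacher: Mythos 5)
Your proof is correct. Note that the paper itself gives no proof of this statement---it is imported verbatim from \cite[Theorem~2.6]{HiroseKin17}---so there is no internal argument to compare against; your two-inclusion argument via Remark \ref{rem_extension} is the natural one and is exactly parallel to the way the paper later proves Theorem \ref{thm_bridge_char}. You correctly isolate and handle the one real subtlety (set-theoretic preservation of $\mathcal{A}$ versus tangle equivalence rel $\partial B^-$): in the direction $\Gamma(\SW_{2n}) \subseteq \MCG(B^-,\mathcal{A})$ the equality $\Phi(\mathcal{A}) = {}^{b}\mathcal{A}$ from Remark \ref{rem_extension} is itself only a tangle equivalence, so the boundary-identity correction $g$ should be chosen to carry $\Phi(\mathcal{A})$ (rather than ${}^{b}\mathcal{A}$ literally) onto $\mathcal{A}$, which transitivity of tangle equivalence permits; and the identification of the kernel as $\langle\Delta^2\rangle$ uses the paper's observation that $\ker\Gamma = \langle\Delta^2\rangle \subset \SW_{2n}$.
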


\begin{ex}
\label{ex_SWA}
We define  $x, y, z \in \SB_ 6$ as follows. 
(See  Figure \ref{fig_ex-wicket}.)
\begin{eqnarray*}
x&=&   \sigma_{3}^2 \sigma_2 \sigma_{3}^2 \sigma_2, 
\\
y&=&  \sigma_1^2 \sigma_2 \sigma_3 \sigma_4 \sigma_5 \sigma_1 \sigma_2 \sigma_3 \sigma_4,  
\\
z&=& \sigma_1^2 \sigma_2 \sigma_3 \sigma_4  \sigma_1 \sigma_2 \sigma_{3} \sigma_{3} \sigma_{4}.
\end{eqnarray*}
The tangles 
${}^{x} \mathcal{A}$, $ {}^{y} \mathcal{A}$ and ${}^{z} \mathcal{A}$ are equivalent to $ \mathcal{A}$. 
Hence  $x,y,z \in \SW_6$. 
%Three examples in Figure \ref{fig_ex-wicket} are elements in $\SW_6$. 
\end{ex}

\begin{center}
\begin{figure}[t]
\includegraphics[height=3cm]{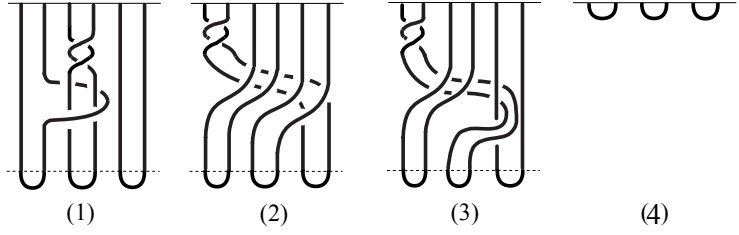}
\caption{(1) ${}^{x} \mathcal{A}$; (2) ${}^{y} \mathcal{A}$; and (3) ${}^{z} \mathcal{A}$ 
for $x,y,z \in \SB_{6}$ in Example \ref{ex_SWA}. 
They are equivalent to  (4) $\mathcal{A}$.}
\label{fig_ex-wicket}
\end{figure}
\end{center}

\begin{lem}
\label{lem_wicket-con} 
Let $\mathcal{T}$ be a trivial $n$-tangle in $B^-$. 
Let $b$ be a spherical $2n$-braid such that 
$\mathcal{T}= {}^{b}\mathcal{A}$. 
Then we have 
$\SW_{2n}(\mathcal{T})= b(\SW_{2n})b^{-1}$. 
\end{lem}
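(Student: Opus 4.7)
The plan is to unpack the defining equation $\SW_{2n}(\mathcal{T})=\{c\in\SB_{2n}\mid {}^{c}\mathcal{T}=\mathcal{T}\}$ using only the associativity of the left action of $\SB_{2n}$ on $n$-tangles in $B^{-}$ stated in Section~\ref{subsection_wicket-groups}, namely ${}^{b'b}\mathcal{T}={}^{b'}({}^{b}\mathcal{T})$. The proof will be a direct double inclusion, and I do not expect any real obstacle here; the only point to be careful about is keeping track of which braid acts on which tangle, since the verification that both sides stabilize $\mathcal{T}$ uses the substitution $\mathcal{T}={}^{b}\mathcal{A}$ twice.

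For the inclusion $\SW_{2n}(\mathcal{T})\subset b\,\SW_{2n}\,b^{-1}$, I would take $c\in\SW_{2n}(\mathcal{T})$ and compute
\[
{}^{b^{-1}cb}\mathcal{A}={}^{b^{-1}c}({}^{b}\mathcal{A})={}^{b^{-1}c}\mathcal{T}={}^{b^{-1}}({}^{c}\mathcal{T})={}^{b^{-1}}\mathcal{T}={}^{b^{-1}}({}^{b}\mathcal{A})={}^{b^{-1}b}\mathcal{A}=\mathcal{A},
\]
so $b^{-1}cb\in\SW_{2n}(\mathcal{A})=\SW_{2n}$, i.e.\ $c\in b\,\SW_{2n}\,b^{-1}$.

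For the reverse inclusion, given any $c'\in\SW_{2n}$, I would set $c=bc'b^{-1}$ and compute
\[
{}^{c}\mathcal{T}={}^{bc'b^{-1}}({}^{b}\mathcal{A})={}^{bc'b^{-1}b}\mathcal{A}={}^{bc'}\mathcal{A}={}^{b}({}^{c'}\mathcal{A})={}^{b}\mathcal{A}=\mathcal{T},
\]
so $c\in\SW_{2n}(\mathcal{T})$, completing the proof. Both directions use only the associativity of the left action and the identity ${}^{1}\mathcal{A}=\mathcal{A}$, so no further ingredient beyond Section~\ref{subsection_wicket-groups} is needed.
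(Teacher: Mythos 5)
Your proof is correct and is essentially the same argument as the paper's: a direct double inclusion using the associativity of the left $\SB_{2n}$-action on tangles and the substitution $\mathcal{T}={}^{b}\mathcal{A}$. The only difference is that you write out the reverse inclusion explicitly where the paper declares it ``similar.''
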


\begin{proof}
Take an element $\beta \in \SW_{2n}(\mathcal{T})= \SW_{2n} ({}^{b}\mathcal{A})$. 
By definition of $\SW_{2n} ({}^{b}\mathcal{A})$, we have 
${}^{\beta}({}^{b}\mathcal{A}) = {}^{b}\mathcal{A}$. 
Then 
${}^{b^{-1} \beta b}\mathcal{A} = \mathcal{A}$. 
Hence we have 
$b^{-1} \beta b \in \SW_{2n}$, which says that 
$\beta \in b (\SW_{2n}) b^{-1}$. 
Thus $\SW_{2n}(\mathcal{T}) \subset b(\SW_{2n})b^{-1}$. 
The proof of $b(\SW_{2n})b^{-1} \subset \SW_{2n}(\mathcal{T})$ is similar. 
\end{proof}

For trivial $n$-tangles $ \mathcal{T}$ and $ \mathcal{U}$ in $B^-$, 
we set 
$$\SW_{2n} (\mathcal{T}, \mathcal{U}) = \SW_{2n} (\mathcal{T}) \cap \SW_{2n} (\mathcal{U}).$$ 
We call the group $\SW_{2n} (\mathcal{T}, \mathcal{U})$ the {\it wicket group on} $\mathcal{T}$ {\it and} $\mathcal{U}$. 
Obviously, 
$\SW_{2n}(\mathcal{A}, \mathcal{A}) =\SW_{2n}(\mathcal{A}) (= \SW_ {2n}).$

The following lemma will be used in the proofs of Theorems~\ref{introthm_asymptitic behavior for the trivial knot} and \ref{introthm_asymptotic behavior for the Hopf link}.

\begin{lem}
\label{lem_SWBC}
Let $x,y$ and $z$ be elements of $\SW_6$ 
as in  Example $\ref{ex_SWA}$. 
Let $\mathcal{B}= \mathcal{B}_3$ and $\mathcal{C}= \mathcal{C}_3$ 
be the trivial tangles as in Figure $\ref{fig_BandC}$. 
Then we have 
$x, y \in \SW_6(\mathcal{B})$ and 
$x, z \in \SW_6(\mathcal{C})$. 
In particular $x, y \in \SW_6(\mathcal{A}, \mathcal{B})$ and 
$x, z \in \SW_6(\mathcal{A}, \mathcal{C})$.  
\end{lem}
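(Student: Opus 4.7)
The ``In particular'' clause is immediate from the definition
$\SW_6(\mathcal{A},\mathcal{T}) = \SW_6(\mathcal{A}) \cap \SW_6(\mathcal{T})$: since Example~\ref{ex_SWA} records that $x,y,z\in \SW_6 = \SW_6(\mathcal{A})$, once the four primary memberships $x,y\in \SW_6(\mathcal{B})$ and $x,z\in \SW_6(\mathcal{C})$ are verified, intersecting with $\SW_6(\mathcal{A})$ yields the stated conclusion. So the task reduces to showing the four equivalences of tangles
${}^{x}\mathcal{B} = \mathcal{B}$, ${}^{y}\mathcal{B} = \mathcal{B}$, ${}^{x}\mathcal{C} = \mathcal{C}$, and ${}^{z}\mathcal{C} = \mathcal{C}$.

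The approach is direct diagrammatic verification, entirely analogous to the method underlying Example~\ref{ex_SWA}. For each of the four pairs $(b,\mathcal{T})\in\{(x,\mathcal{B}),(y,\mathcal{B}),(x,\mathcal{C}),(z,\mathcal{C})\}$, I would stack the braid word $b$ on top of the diagram of $\mathcal{T}$ from Figure~\ref{fig_BandC} to draw ${}^{b}\mathcal{T}$, and then exhibit an ambient isotopy of $B^{-}$, fixing $\partial B^{-}$ pointwise, that carries ${}^{b}\mathcal{T}$ to $\mathcal{T}$. The guiding principle is the following absorption observation: if an arc of $\mathcal{T}$ joins the boundary points labelled $i$ and $i+1$, then any half-twist $\sigma_{i}^{\pm 1}$ appearing at the bottom of the braid sitting above this arc can be isotoped into a rotation of the arc itself and thereby eliminated, producing an equivalent tangle with a shorter braid word on top. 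By repeatedly applying this absorption, together with the braid relations $\sigma_{i}\sigma_{j} = \sigma_{j}\sigma_{i}$ for $|i-j|\ge 2$ and $\sigma_{i}\sigma_{i+1}\sigma_{i} = \sigma_{i+1}\sigma_{i}\sigma_{i+1}$ used to bring a relevant generator to the bottom, one reduces the braid word to the empty braid over $\mathcal{T}$.

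Concretely, for ${}^{x}\mathcal{B}$ and ${}^{x}\mathcal{C}$ the braid $x = \sigma_{3}^{2}\sigma_{2}\sigma_{3}^{2}\sigma_{2}$ is supported on strands $2,3,4$, so only the arcs of $\mathcal{B}$ and $\mathcal{C}$ incident to those strands play a role; a short sequence of absorptions handles both cases. For ${}^{y}\mathcal{B}$, the braid $y$ is the standard ``long'' word $\sigma_{1}^{2}\sigma_{2}\sigma_{3}\sigma_{4}\sigma_{5}\sigma_{1}\sigma_{2}\sigma_{3}\sigma_{4}$, which can be seen as a $\sigma_{1}^{2}$ factor followed by two nested descending sweeps; reading it against the arc pattern of $\mathcal{B}$ allows each sweep to be absorbed one generator at a time. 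The case ${}^{z}\mathcal{C}$ is handled in the same style, using that $z$ differs from $y$ only by a commutation in the last block, which precisely matches the modified arc pattern of $\mathcal{C}$ relative to $\mathcal{B}$.

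The main obstacle is not conceptual but bookkeeping: four separate picture-proofs must be carried out, and one has to be careful at each step that the isotopy moves genuinely fix the boundary sphere and do not introduce spurious crossings. In practice I would present each verification as a labelled sequence of small figures, each differing from the next by a single braid relation or absorption move, and rely on the reader to check the equivalences visually, just as the analogous statement ${}^{x}\mathcal{A},{}^{y}\mathcal{A},{}^{z}\mathcal{A}\simeq\mathcal{A}$ was justified in Example~\ref{ex_SWA}.
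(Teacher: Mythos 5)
Your proposal is correct and follows essentially the same route as the paper: the authors simply verify the four tangle identities ${}^{x}\mathcal{B}=\mathcal{B}={}^{y}\mathcal{B}$ and ${}^{x}\mathcal{C}=\mathcal{C}={}^{z}\mathcal{C}$ by a direct diagrammatic check (their Figure for this lemma), and the ``in particular'' clause then follows from the definition of $\SW_6(\mathcal{A},\mathcal{T})$ as an intersection, exactly as you describe. Your absorption heuristic is just a more explicit account of the isotopies the paper leaves to the figure.
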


\begin{proof}
We see that 
${}^{x} \mathcal{B}= \mathcal{B}=  {}^{y} \mathcal{B}$, and 
${}^{x} \mathcal{C}= \mathcal{C}= {}^{z} \mathcal{C}$ (Figure \ref{fig_ex-wicketBC}).  
We are done. 
\end{proof}

\begin{center}
\begin{figure}[t]
\includegraphics[height=4.6cm]{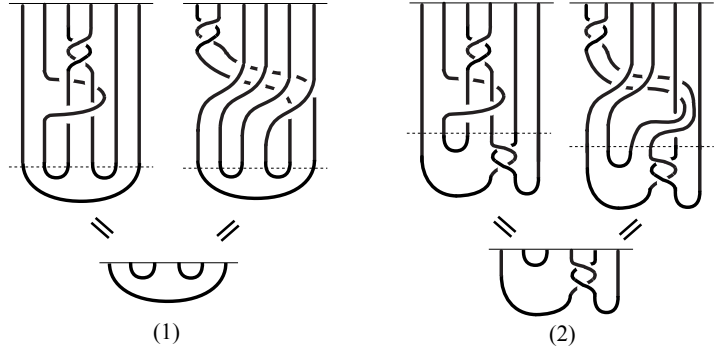}
\caption{(1) ${}^{x} \mathcal{B}= \mathcal{B}=  {}^{y} \mathcal{B}$ and 
(2) ${}^{x} \mathcal{C}= \mathcal{C}= {}^{z} \mathcal{C}$ 
for $x,y,z \in \SB_{6}$ in Example \ref{ex_SWA}. 
See also Figure \ref{fig_BandC}.}
\label{fig_ex-wicketBC}
\end{figure}
\end{center}

By Lemma~\ref{lem_wicket-con}, we immediately have the following corollary.

\begin{cor}
\label{cor_double_wicket_char}
Let $b,d \in \SB_{2n}$. 
Then we have 
$$
\SW_{2n} ({}^{b}\mathcal{A}, {}^{d}\mathcal{A})
=
b (\SW_{2n} ) b^{-1} \cap d (\SW_{2n} ) d^{-1}. 
$$
\end{cor}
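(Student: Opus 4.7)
The plan is extremely short: this corollary is simply Lemma \ref{lem_wicket-con} applied twice, combined with the definition of $\SW_{2n}(\mathcal{T}, \mathcal{U})$ as an intersection. So there is no real obstacle; the work was already done.

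More precisely, first I would recall the definition
\[
\SW_{2n}({}^b \mathcal{A}, {}^d \mathcal{A}) \;=\; \SW_{2n}({}^b \mathcal{A}) \cap \SW_{2n}({}^d \mathcal{A}),
\]
which is just the definition of the wicket group on two trivial tangles, specialized to $\mathcal{T} = {}^b \mathcal{A}$ and $\mathcal{U} = {}^d \mathcal{A}$. Note that both ${}^b \mathcal{A}$ and ${}^d \mathcal{A}$ are automatically trivial tangles by Lemma \ref{lem_trivial-tangle}(1), so the definition applies.

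Next I would invoke Lemma \ref{lem_wicket-con} with $\mathcal{T} = {}^b \mathcal{A}$ (so the braid taking $\mathcal{A}$ to $\mathcal{T}$ is $b$ itself) to obtain $\SW_{2n}({}^b \mathcal{A}) = b(\SW_{2n}) b^{-1}$, and again with $\mathcal{T} = {}^d \mathcal{A}$ to obtain $\SW_{2n}({}^d \mathcal{A}) = d(\SW_{2n}) d^{-1}$. Substituting these two equalities into the displayed intersection yields the claimed identity
\[
\SW_{2n}({}^b \mathcal{A}, {}^d \mathcal{A}) \;=\; b(\SW_{2n}) b^{-1} \cap d(\SW_{2n}) d^{-1},
\]
which completes the proof. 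The hardest step, such as it is, is purely bookkeeping: making sure the conjugation convention in Lemma \ref{lem_wicket-con} is applied consistently to each factor, which is immediate from the left action $b' \cdot b : \mathcal{T} \mapsto {}^{b'} ({}^b \mathcal{T})$ set up in Section \ref{subsection_wicket-groups}.
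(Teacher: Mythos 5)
Your proposal is correct and matches the paper's argument exactly: the paper derives this corollary immediately from Lemma~\ref{lem_wicket-con} applied to each of ${}^{b}\mathcal{A}$ and ${}^{d}\mathcal{A}$, combined with the definition of $\SW_{2n}(\mathcal{T},\mathcal{U})$ as the intersection $\SW_{2n}(\mathcal{T}) \cap \SW_{2n}(\mathcal{U})$. Nothing is missing.
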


\begin{lem}
\label{lem_wicket-conj}
Let $ \mathcal{T}$ and $ \mathcal{U}$ be trivial $n$-tangles in $B^-$. 
Let $b$ and $d$ be the spherical $2n$-braids such that 
$\mathcal{T} = {}^{b}\mathcal{A}$ and 
$\mathcal{U} = {}^{d}\mathcal{A}$. 
Then we have 
$$\SW_{2n} (\mathcal{T}, \mathcal{U})  = b( \SW_{2n} (\mathcal{A},  {}^{b^{-1}d} \mathcal{A})) b^{-1}.$$ 
\end{lem}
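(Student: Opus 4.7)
The plan is to rewrite both $\SW_{2n}(\mathcal{T})$ and $\SW_{2n}(\mathcal{U})$ as conjugates by the same braid $b$, and then intersect. The argument rests on two observations: a slight generalization of Lemma~\ref{lem_wicket-con}, and the fact that conjugation commutes with intersection.

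First, I would apply Lemma~\ref{lem_wicket-con} directly to $\mathcal{T}={}^{b}\mathcal{A}$ to obtain $\SW_{2n}(\mathcal{T})=b\,\SW_{2n}\,b^{-1}=b\,\SW_{2n}(\mathcal{A})\,b^{-1}$. For $\mathcal{U}$, I would not apply Lemma~\ref{lem_wicket-con} directly (which would conjugate by $d$); instead I would first rewrite
\[
\mathcal{U}={}^{d}\mathcal{A}={}^{b(b^{-1}d)}\mathcal{A}={}^{b}\bigl({}^{b^{-1}d}\mathcal{A}\bigr).
\]
Then I would invoke the following generalization of Lemma~\ref{lem_wicket-con}: for any trivial $n$-tangle $\mathcal{V}$ in $B^-$ and any $c\in\SB_{2n}$, we have $\SW_{2n}({}^{c}\mathcal{V})=c\,\SW_{2n}(\mathcal{V})\,c^{-1}$. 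The proof is identical to that of Lemma~\ref{lem_wicket-con}: $\beta\in\SW_{2n}({}^{c}\mathcal{V})$ iff ${}^{\beta}({}^{c}\mathcal{V})={}^{c}\mathcal{V}$ iff ${}^{c^{-1}\beta c}\mathcal{V}=\mathcal{V}$ iff $c^{-1}\beta c\in\SW_{2n}(\mathcal{V})$. Applying this with $c=b$ and $\mathcal{V}={}^{b^{-1}d}\mathcal{A}$ yields
\[
\SW_{2n}(\mathcal{U})=b\,\SW_{2n}\bigl({}^{b^{-1}d}\mathcal{A}\bigr)\,b^{-1}.
\]

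Finally, I would combine the two expressions using the elementary fact that, for any subgroups $X,Y\subset\SB_{2n}$ and any $b\in\SB_{2n}$, one has $bXb^{-1}\cap bYb^{-1}=b(X\cap Y)b^{-1}$. This gives
\[
\SW_{2n}(\mathcal{T},\mathcal{U})=\SW_{2n}(\mathcal{T})\cap\SW_{2n}(\mathcal{U})
=b\bigl(\SW_{2n}(\mathcal{A})\cap\SW_{2n}({}^{b^{-1}d}\mathcal{A})\bigr)b^{-1}
=b\,\SW_{2n}\bigl(\mathcal{A},{}^{b^{-1}d}\mathcal{A}\bigr)\,b^{-1},
\]
which is the desired identity. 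There is no real obstacle here; the only minor point deserving attention is recording the mild generalization of Lemma~\ref{lem_wicket-con} used for $\SW_{2n}(\mathcal{U})$, since the lemma as stated assumes the reference tangle is $\mathcal{A}$. Alternatively one can bypass this by combining Lemma~\ref{lem_wicket-con} applied to both $\mathcal{T}$ (via $b$) and $\mathcal{U}$ (via $d$) and then using $d\,\SW_{2n}\,d^{-1}=b(b^{-1}d)\SW_{2n}(b^{-1}d)^{-1}b^{-1}$, but the route above is cleaner.
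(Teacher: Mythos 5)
Your proof is correct and is essentially the paper's argument in a lightly reorganized form: the paper applies Corollary~\ref{cor_double_wicket_char} twice (to $\SW_{2n}(\mathcal{T},\mathcal{U})$ and to $\SW_{2n}(\mathcal{A},{}^{b^{-1}d}\mathcal{A})$) and then compares, which amounts to exactly your conjugation computation together with the fact that conjugation commutes with intersection. Your mild generalization of Lemma~\ref{lem_wicket-con} is valid by the same two-line argument as the lemma itself, so there is no gap.
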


\begin{proof}
By Corollary  \ref{cor_double_wicket_char}, we have 
$$\SW_{2n} (\mathcal{T}, \mathcal{U}) = 
b (\SW_{2n}) b^{-1} \cap d (\SW_{2n}) d^{-1} .$$
By Corollary \ref{cor_double_wicket_char} again, we have 
$$\SW_{2n}(\mathcal{A}, {}^{b^{-1} d} \mathcal{A})
= \SW_{2n} \cap b^{-1} d (\SW_{2n}) d^{-1} b.$$
These two equalities imply the assertion. 
\end{proof}

\subsection{Hyperelliptic handlebody groups}

In this subsection, we review the notion of the hyperelliptic handlebody group 
developed in \cite{HiroseKin17}. 
We go into some details in some of its properties for the later use.

Let $V_g$ be a handlebody of genus $g \geq 2$ with $\partial V_g = \Sigma_g$. 
We call an involution $\hat{\iota} \in \Homeo_+(V_g)$ a {\it hyperelliptic involution} of $V_g$ 
if so is $\hat{\iota} |_{\Sigma_g}$ of $\Sigma_g$,  that is, $\hat{\iota}|_{\Sigma_g}$ is an order 
$2$ element of $\Homeo_+ (\Sigma_g)$ that acts on $H_1 (\Sigma_g; \Z)$ by $-I$. 
The following lemma is straightforward from Pantaleoni-Piergallini \cite{PantaleoniPiergallini_11}.

\begin{lem}
\label{lem:Pantaolini-Piergallini}
Any two  hyperelliptic involutions of $V_g$ are conjugate in the handlebody group $\MCG  (V_g)$. 
\end{lem}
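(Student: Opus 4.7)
The plan is to invoke the classification of involutions on $3$-dimensional handlebodies due to Pantaleoni--Piergallini at the level of $\Homeo_+(V_g)$ and then project to isotopy classes to land in $\MCG(V_g)$.

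First I would recall from Pantaleoni--Piergallini the following output: any orientation-preserving involution $\hat{\iota}$ of $V_g$ whose boundary restriction is a hyperelliptic involution of $\Sigma_g$ is conjugate in $\Homeo_+(V_g)$ to a fixed standard model. Concretely, the branched quotient $V_g/\hat{\iota}$ is homeomorphic to a $3$-ball $B$, and the image of $\Fix(\hat{\iota})$ is a trivial $(g+1)$-tangle in $B$ whose $2(g+1)$ endpoints in $\partial B$ are the branch points of the hyperelliptic involution $\hat{\iota}|_{\partial V_g}$ on $\Sigma_g$. Since any two trivial $(g+1)$-tangles in $B^3$ are equivalent via an orientation-preserving self-homeomorphism of $B^3$, this data pins down $\hat{\iota}$ uniquely up to conjugation by an element of $\Homeo_+(V_g)$.

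Now given two hyperelliptic involutions $\hat{\iota}_1,\hat{\iota}_2$ of $V_g$, the previous paragraph furnishes $f \in \Homeo_+(V_g)$ with $f \hat{\iota}_1 f^{-1} = \hat{\iota}_2$. Passing to isotopy classes, the element $[f] \in \MCG(V_g)$ realizes $[f][\hat{\iota}_1][f]^{-1} = [\hat{\iota}_2]$, which is exactly the statement of the lemma.

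The only subtlety I would have to check is that the conjugating homeomorphism produced by Pantaleoni--Piergallini can indeed be chosen in $\Homeo_+(V_g)$ rather than only in $\Homeo(V_g)$; this, however, is immediate because orientation-preserving equivalences of trivial tangles in $B^3$ always lift to orientation-preserving homeomorphisms along the double branched cover. Hence there is no genuine obstacle beyond citing the classification, which is why the result is stated as being straightforward from their work.
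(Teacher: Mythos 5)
The paper offers no argument beyond the citation---it simply declares the lemma ``straightforward from Pantaleoni--Piergallini''---and your proposal is exactly the natural unpacking of that citation: both involutions are conjugate in $\Homeo_+(V_g)$ to the standard model whose branched quotient is a $3$-ball with a trivial $(g+1)$-tangle as branch locus, and conjugacy in $\Homeo_+(V_g)$ descends to conjugacy in $\MCG(V_g)$. This is correct and is essentially the paper's (implicit) approach; the only step you leave compressed is why the hyperelliptic hypothesis forces that particular standard model (no circle components in $\Fix(\hat{\iota})$ and quotient a ball), which one can extract from the Smith inequality together with the observation that $\Sigma_g/\hat{\iota}|_{\Sigma_g}$ is a sphere, so that the quotient handlebody has sphere boundary.
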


By this lemma, without loss of generality we can assume that $\hat{\iota}$ is the map shown in 
Figure \ref{fig_iota}, where we think of $V_g$ as being embedded in $\R^3$. 

\begin{center}
\begin{figure}[t]
\includegraphics[height=2.5cm]{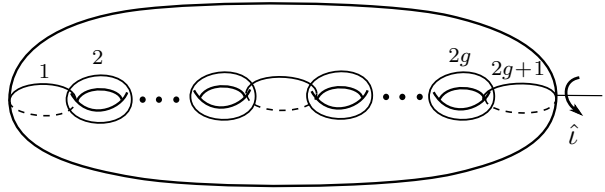}
\begin{picture}(400,0)(0,0)
\put(79,55){\begin{scriptsize}$1$\end{scriptsize}}
\put(99,60){\begin{scriptsize}$2$\end{scriptsize}}
\put(234,61){\begin{scriptsize}$2g$\end{scriptsize}}
\put(250,56){\begin{scriptsize}$2g \! + \! 1$\end{scriptsize}}
\put(279,30){$\hat{\iota}$}
\end{picture}
\caption{A hyperelliptic involution $\hat{ \iota}$ defined on $V_g$.}
\label{fig_iota}
\end{figure}
\end{center}

Fix a hyperelliptic involution $\hat{\iota}$ of $V_g$ and set $\iota := \hat{\iota} |_{\Sigma_g}$. 
We denote by $\SHomeo_+ (V_g)$ (respectively, $\SHomeo_+ (\Sigma_g)$) the centralizer 
in $\Homeo_+ (V_g)$ (respectively, $\Homeo_+ (\Sigma_g)$) of $\hat{\iota}$ (respectively, $\iota$). 
We call  
$$\mathcal{H} (V_g) := \pi_0 (\SHomeo_+ (V_g))$$ a {\it hyperelliptic handlebody group}, 
and 
$$\mathcal{H} (\Sigma_g) := \pi_0 (\SHomeo_+ (\Sigma_g))$$ a {\it hyperelliptic mapping class group}. 
By Birman-Hilden \cite{BirmanHilden71} the following holds.
\begin{thm}
\label{thm:Birman-Hilden}
We have a canonical isomorphism 
$$\mathcal{H} (\Sigma_g)/ \langle [ \iota ] \rangle 
\cong \MCG (\Sigma_{0,2g+2}).  $$
\end{thm}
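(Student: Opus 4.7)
The plan is to construct the isomorphism using the branched covering $q: \Sigma_g \to \Sigma_g/\langle \iota \rangle$. The fixed-point set of $\iota$ consists of $2g+2$ points (the Weierstrass points), so the quotient is naturally $\Sigma_{0, 2g+2}$, with the marked points being the images of the fixed points. A symmetric homeomorphism $f \in \SHomeo_+(\Sigma_g)$ descends to a homeomorphism $\bar{f} \in \Homeo_+(\Sigma_{0, 2g+2})$ because $f$ commutes with $\iota$ and hence preserves the $\iota$-orbits; this gives a group homomorphism $\Phi: \SHomeo_+(\Sigma_g) \to \Homeo_+(\Sigma_{0, 2g+2})$.

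First, I would verify that $\Phi$ descends to a homomorphism of mapping class groups $\bar{\Phi}: \mathcal{H}(\Sigma_g) \to \MCG(\Sigma_{0, 2g+2})$. This is immediate because a symmetric isotopy $\{f_t\} \subset \SHomeo_+(\Sigma_g)$ descends, orbit by orbit, to an isotopy $\{\bar{f}_t\} \subset \Homeo_+(\Sigma_{0, 2g+2})$. Next, I would check surjectivity: given any $\bar{g} \in \Homeo_+(\Sigma_{0, 2g+2})$, the branched cover $q$ is determined by the index-$2$ subgroup $\ker (\pi_1(\Sigma_{0, 2g+1}) \to \Z/2\Z)$ sending each loop around a marked point to $1$, and this subgroup is preserved by $\bar{g}_*$ since $\bar{g}$ permutes the marked points. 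Standard covering space theory then produces a lift $g \in \SHomeo_+(\Sigma_g)$ with $\bar{g} = \Phi(g)$.

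Finally, and this is the heart of the matter, I would determine the kernel of $\bar{\Phi}$. Clearly $[\iota]$ lies in the kernel, so $\langle [\iota] \rangle \subset \ker \bar{\Phi}$. For the reverse inclusion, suppose $f \in \SHomeo_+(\Sigma_g)$ with $[\bar{f}] = 1$, so there is an isotopy $\{\bar{f}_t\}$ in $\Homeo_+(\Sigma_{0,2g+2})$ from $\bar{f}_0 = \bar{f}$ to $\bar{f}_1 = \id$. The hard step is to \emph{lift} this isotopy through symmetric homeomorphisms of $\Sigma_g$: this is precisely the content of the classical Birman--Hilden theorem \cite{BirmanHilden71}, which I would invoke directly. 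The lifted isotopy connects $f$ to a lift of $\id_{\Sigma_{0,2g+2}}$; since the deck group of $q$ is $\langle \iota \rangle$, this lift is either $\id_{\Sigma_g}$ or $\iota$, whence $[f] \in \langle [\iota] \rangle$.

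The main obstacle is the lifting of isotopies in the last step: showing that two symmetric homeomorphisms that are isotopic among \emph{all} homeomorphisms of $\Sigma_g$ are in fact isotopic through \emph{symmetric} ones. Fortunately this is exactly the Birman--Hilden property, which holds in the hyperelliptic setting we are in, so the proof amounts to setting up the descent/lifting machinery carefully and quoting \cite{BirmanHilden71}.
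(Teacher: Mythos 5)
Your proposal is correct and follows essentially the same route as the paper, which simply attributes this statement to Birman--Hilden \cite{BirmanHilden71}: you unpack the standard descent/lifting construction, but as you yourself note, the crucial step (that symmetric homeomorphisms of $\Sigma_g$ that are isotopic through all of $\Homeo_+(\Sigma_g)$ are symmetrically isotopic) is still quoted from \cite{BirmanHilden71}, so nothing genuinely different is being done. One cosmetic slip: the fundamental group relevant to the surjectivity step is that of the $(2g+2)$-punctured sphere, a free group of rank $2g+1$, which you write as $\pi_1(\Sigma_{0,2g+1})$; this does not affect the argument.
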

%Further, 
We note that this theorem, together with \cite[Theorem 4]{BirmanHilden73}, implies that  
the group $\mathcal{H} (\Sigma_g)$ can naturally be identified with the centralizer in 
$\MCG (\Sigma_g)$ of the mapping class $[\iota]$. 
By Theorem~\ref{thm:Birman-Hilden} the quotient map 
$\mathcal{H}(\Sigma_{g}) \to \mathcal{H}(\Sigma_{g})/ \langle [ \iota ] \rangle$ 
gives the surjective homomorphism 
$$\Pi : \mathcal{H}(\Sigma_{g}) \to \MCG (\Sigma_{0,2g+2}).$$ 
The map $\Pi$ sends  $\tau_i$ to the half twist $h_i $, 
where $\tau_i$ is the  right-handed Dehn twist about the simple closed curve labeled with the number $i$ in Figure \ref{fig_iota}. 

Let $q: V_g \to V_g / \hat{\iota} =: B$ be the projection. 
%Let $\mathrm{Fix} ( \hat{\iota})$ denote the set of fixed points of $\hat{\iota}$. 
Set $\mathcal{A} ( = \mathcal{A}_{g+1}) := q (\Fix (\hat{\iota}))$, 
where $\mathrm{Fix} ( ~ \cdot ~ )$ denotes the set of fixed points. 
We note that $\mathcal{A}$ is the trivial $(g + 1)$-tangle in the $3$-ball $B$. 
By basic algebraic topology arguments, we have the following. 

\begin{lem}
\label{lem:lifting a homeomorphism}
\begin{enumerate}
\item
Any element in $\Homeo_+ (B, \mathcal{A})$ lifts to an element of $\SHomeo_+ (V_g)$. 
\item 
Given a path in $\Homeo_+ (B, \mathcal{A})$ with the initial point 
$\hat{\phi} \in \Homeo_+ (B, \mathcal{A})$ 
and a lift $\hat{f} \in \SHomeo_+ (V_g)$ of $\hat{\phi}$, 
there exists a unique lift in $\SHomeo_+ (V_g)$ of the path with the initial point 
$\hat{f}$.
\end{enumerate}
\end{lem}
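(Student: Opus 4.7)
The plan is to prove both statements using standard branched covering space theory, working first on the complement of the branch locus and then extending across it.

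For (1), the key observation is that the restriction $q|: V_g \setminus q^{-1}(\mathcal{A}) \to B \setminus \mathcal{A}$ is an honest (unbranched) connected double cover, so it is classified by a surjective homomorphism $\chi: \pi_1(B \setminus \mathcal{A}) \to \mathbb{Z}/2$. This $\chi$ sends each meridian of a component of $\mathcal{A}$ to the nontrivial element, and in fact $\chi$ is the mod-$2$ linking number with $\mathcal{A}$. Given $\hat{\phi} \in \Homeo_+(B, \mathcal{A})$, its restriction $\hat{\phi}_0$ preserves meridians of $\mathcal{A}$ setwise, hence $\chi \circ (\hat{\phi}_0)_\ast = \chi$. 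The standard lifting criterion produces a lift $\tilde{\phi}_0$ on the unbranched part. To extend across $q^{-1}(\mathcal{A})$, use the local model: near a point of $\mathcal{A}$, $q$ looks like $(z,t) \mapsto (z^2, t)$ on $D^2 \times I$, and the completion of the unbranched cover along the branch locus is intrinsically determined, so $\tilde{\phi}_0$ extends continuously and uniquely to a homeomorphism $\tilde{\phi}$ of $V_g$.

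Next I must show $\tilde{\phi} \in \SHomeo_+(V_g)$, i.e.\ that $\tilde{\phi}$ is orientation-preserving and commutes with $\hat{\iota}$. Orientation-preservation is automatic: the branched double cover is orientation-preserving (because $\hat{\iota}$ is), and a lift of an orientation-preserving map is orientation-preserving. For commutation, $\tilde{\phi}\,\hat{\iota}\,\tilde{\phi}^{-1}$ is a self-homeomorphism of $V_g$ covering the identity on $B$, hence a deck transformation of the branched cover. Since the deck group is $\{\mathrm{id}, \hat{\iota}\}$ and $\tilde{\phi}\,\hat{\iota}\,\tilde{\phi}^{-1}$ is an involution different from the identity, we conclude $\tilde{\phi}\,\hat{\iota}\,\tilde{\phi}^{-1} = \hat{\iota}$.

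For (2), I apply the same program to a path rather than a single homeomorphism. A path in $\Homeo_+(B, \mathcal{A})$ starting at $\hat{\phi}$ is equivalently an isotopy $\hat{\Phi}: B \times I \to B \times I$ of the form $(x, t) \mapsto (\hat{\phi}_t(x), t)$ preserving $\mathcal{A} \times I$. Apply part (1) (or rather, its proof) to the fiber-preserving homeomorphism $\hat{\Phi}$ of $B \times I$, whose branched double cover is $V_g \times I \to B \times I$. This produces a lift $\tilde{\Phi}$; among the two possible lifts, exactly one agrees with $\hat{f}$ at time $t = 0$, giving the desired path in $\SHomeo_+(V_g)$. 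Uniqueness is immediate: at each $t$ there are precisely two lifts of $\hat{\phi}_t$, differing by $\hat{\iota}$, and continuity of the path together with the initial condition forces a unique choice.

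The main technical obstacle is the extension of the unbranched lift across the branch locus and the verification that this extension is still a homeomorphism; everything else is a direct appeal to the lifting criterion, path-lifting in covering spaces, and the elementary structure of the degree-$2$ deck group. Since the branched cover is locally modeled on $z \mapsto z^2$ in a transverse disk to $\mathcal{A}$, this extension is routine but needs to be stated carefully.
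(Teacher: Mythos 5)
Your argument is correct and is precisely the ``basic algebraic topology'' argument that the paper gestures at: the paper states this lemma without proof, and your write-up (restricting to the unbranched double cover classified by the mod-$2$ linking number, invoking the lifting criterion since $\hat{\phi}$ permutes meridians, extending across the branch locus via the local model $z\mapsto z^2$, and identifying $\tilde{\phi}\,\hat{\iota}\,\tilde{\phi}^{-1}$ as the nontrivial deck transformation) supplies the standard details, as does your treatment of path lifting and uniqueness for part (2). No gaps; this matches the intended proof.
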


The next theorem follows from Theorem~\ref{thm:Birman-Hilden} and 
Lemma~\ref{lem:lifting a homeomorphism}.

\begin{thm}[Theorem~2.11 in \cite{HiroseKin17}]
\label{thm:H(V)/i}
The natural map 
$$ \mathcal{H}(V_{ g}) \to \MCG  (B, \mathcal{A}_{g + 1}) $$
is surjective and its kernel is $\langle [\hat{\iota}] \rangle$. 
Thus, we have 
$\mathcal{H}(V_{g}) / \langle [\hat{\iota}] \rangle \cong \MCG  (B, \mathcal{A}_{g+1})$. 
\end{thm}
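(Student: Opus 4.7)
The plan is to produce the map $\Psi\colon \mathcal{H}(V_g)\to \MCG(B,\mathcal{A}_{g+1})$ as the descent induced by the branched double cover $q\colon V_g\to V_g/\hat\iota = B$, and then to read off both its surjectivity and its kernel directly from Lemma~\ref{lem:lifting a homeomorphism}. Any $\hat f\in \SHomeo_+(V_g)$ commutes with $\hat\iota$ and so descends to a unique $\phi\in \Homeo_+(B)$ preserving the branch locus $\mathcal{A}=q(\Fix(\hat\iota))$; an isotopy through $\SHomeo_+(V_g)$ descends to an isotopy through $\Homeo_+(B,\mathcal{A})$, so the assignment $[\hat f]\mapsto [\phi]$ is a well-defined homomorphism $\Psi$. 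Surjectivity is immediate: for $[\phi]\in \MCG(B,\mathcal{A}_{g+1})$ with representative $\phi$, part~(1) of Lemma~\ref{lem:lifting a homeomorphism} yields a lift $\hat f\in \SHomeo_+(V_g)$ of $\phi$, and then $\Psi([\hat f])=[\phi]$.

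For the kernel, note first that $\hat\iota$ descends to $\id_B$, so $\langle[\hat\iota]\rangle\subset \ker\Psi$. For the reverse inclusion, let $\hat f\in \SHomeo_+(V_g)$ represent a class in $\ker\Psi$ whose image in $\Homeo_+(B,\mathcal{A})$ is $\phi$, and choose an isotopy $\{\phi_t\}_{t\in[0,1]}$ in $\Homeo_+(B,\mathcal{A})$ with $\phi_0=\phi$ and $\phi_1=\id_B$. By part~(2) of Lemma~\ref{lem:lifting a homeomorphism}, this path lifts uniquely to a path $\{\hat f_t\}$ in $\SHomeo_+(V_g)$ with $\hat f_0=\hat f$, and the endpoint $\hat f_1$ is a lift of $\id_B$. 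The only two lifts of $\id_B$ to $V_g$ are $\id_{V_g}$ and $\hat\iota$ (any other candidate would have to differ from one of these by a deck transformation on a connected component of $V_g\setminus \Fix(\hat\iota)$), so $[\hat f]\in \{[\id],[\hat\iota]\}=\langle[\hat\iota]\rangle$.

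It remains to check that $\langle[\hat\iota]\rangle$ is genuinely of order two, that is, that $\hat\iota$ is not isotopic to $\id_{V_g}$ through $\SHomeo_+(V_g)$. Restriction to the boundary sends $[\hat\iota]$ to $[\iota]\in \mathcal{H}(\Sigma_g)\subset \MCG(\Sigma_g)$, and $[\iota]$ is nontrivial since $\iota$ acts as $-I$ on $H_1(\Sigma_g;\Z)$; hence $[\hat\iota]\ne[\id]$ in $\mathcal{H}(V_g)$. The only real obstacle in this argument is controlling the functorial behaviour of the quotient $V_g\to B$ at the level of isotopy classes, but this is exactly what Lemma~\ref{lem:lifting a homeomorphism} packages for us, so assembling the proof above requires nothing beyond routine verification.
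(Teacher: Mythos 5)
Your proof is correct and follows essentially the same route the paper indicates: the paper gives no written-out argument, asserting only that the theorem ``follows from Theorem~\ref{thm:Birman-Hilden} and Lemma~\ref{lem:lifting a homeomorphism}'', and your descent map together with parts (1) and (2) of Lemma~\ref{lem:lifting a homeomorphism} is precisely the intended covering-space argument. The only (harmless) divergence is that you verify $[\hat{\iota}]\neq[\mathrm{id}]$ via the $-I$ action on $H_1(\Sigma_g;\Z)$ rather than through Theorem~\ref{thm:Birman-Hilden}, which works just as well.
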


We denote by  
$\mathrm{E} \Homeo_+ (\Sigma_g)$  the subgroup of $\Homeo_+ (\Sigma_g)$ consisting of 
homeomorphisms of  $\Sigma_g$ that extend to those of $V_g$. 
Note that the injectivity of the natural map $\MCG (V_g) \to \MCG (\Sigma_g)$ 
implies 
$$\pi_0 (\mathrm{E} \Homeo_+ (\Sigma_g)) \cong \MCG (V_g).$$ 
We say that two elements of $\SHomeo_+ (V_g)$ (respectively, $\SHomeo_+ (\Sigma_g)$) 
are {\it symmetrically isotopic} 
if they lie in the same component of $\SHomeo_+ (V_g)$ (respectively, $\SHomeo_+ (\Sigma_g)$).  

\begin{lem}
\label{lem:Hirose-Kin 2017 Prop A.6}
\begin{enumerate}
\item
For any $f \in \mathrm{E} \Homeo_+ (\Sigma_g) \cap \SHomeo_+ (\Sigma_g)$, 
there exists an element $\hat{f} \in \SHomeo_+ (V_{ g })$ with $\hat{f} |_{\Sigma_g} = f$. 
\item
Let $\hat{f}_0$ and $\hat{f}_1$ be elements of $\SHomeo_+ (V_g)$ 
that are isotopic. 
%$\hat{f}_0 |_{\Sigma_g} = f_0$ and 
%$\hat{f}_1 |_{\Sigma_g} = f_1$ are isotopic. 
%in $\SHomeo_+ (\Sigma_g)$. 
%Let $f_0$ and $f_1$ be elements of $\mathrm{E} \Homeo_+ (\Sigma_g) \cap \SHomeo_+ (\Sigma_g)$ 
%that are symmetrically isotopic. 
%Let $\hat{f}_0$ and $\hat{f}_1$ be elements of $\SHomeo_+ (V_g)$ with  
%$\hat{f}_i |_{\Sigma_g} = f_i$ $(i = 0, 1)$. 
Then $\hat{f}_0$ and $\hat{f}_1$ are symmetrically isotopic.  
\end{enumerate}
\end{lem}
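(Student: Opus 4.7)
The plan for both parts is to upgrade from the base of the branched cover $q\colon V_g\to B=V_g/\hat\iota$ to the total space, using Lemma~\ref{lem:lifting a homeomorphism} together with the Birman–Hilden-type correspondence supplied by Theorems~\ref{thm:Birman-Hilden} and \ref{thm:H(V)/i}.

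For (1): Since $f$ commutes with $\iota$, it descends to an orientation-preserving homeomorphism $\bar f\colon S^2\to S^2$ preserving the $2g+2$ branch points. The key step is to exhibit an extension $\bar F\in\Homeo_+(B,\mathcal A_{g+1})$ of $\bar f$. Granting this, Lemma~\ref{lem:lifting a homeomorphism}(1) produces a lift $\hat F\in\SHomeo_+(V_g)$ of $\bar F$; the boundary restriction $\hat F|_{\Sigma_g}$ is a lift of $\bar f$ through $q|_{\Sigma_g}$, hence equals $f$ or $\iota f$, and in the second case we replace $\hat F$ by $\hat\iota\hat F$ to obtain the required symmetric extension of $f$. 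To construct $\bar F$, start from a (not necessarily symmetric) extension $F\in\Homeo_+(V_g)$ of $f$, which exists by hypothesis. Since $f$ commutes with $\iota$, the conjugate $\hat\iota F\hat\iota$ is another extension of $f$, so $F^{-1}(\hat\iota F\hat\iota)$ restricts to $\id_{\Sigma_g}$; a Waldhausen-type fact for handlebodies shows this discrepancy is isotopic to $\id_{V_g}$ rel $\Sigma_g$. Thus $F$ and $\hat\iota F\hat\iota$ lie in the same component of the space $\mathcal E(f)$ of boundary-fixed extensions of $f$, which is a torsor over the contractible group $\Homeo_\partial(V_g)$. Applying Smith theory to the continuous $\Z/2$-action $F\mapsto\hat\iota F\hat\iota$ on this contractible space yields a fixed point---a symmetric extension of $f$---whose image under $q$ is $\bar F$.

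For (2): Replace $\hat f_0,\hat f_1$ by $\hat g:=\hat f_0\hat f_1^{-1}\in\SHomeo_+(V_g)$ and $\id_{V_g}$, reducing to showing that $\hat g\simeq\id_{V_g}$ in $\Homeo_+(V_g)$ implies $\hat g\simeq\id_{V_g}$ in $\SHomeo_+(V_g)$. The restriction $g:=\hat g|_{\Sigma_g}$ is symmetric and isotopic to $\id_{\Sigma_g}$ in $\Homeo_+(\Sigma_g)$. The injectivity of $\mathcal H(\Sigma_g)\to\MCG(\Sigma_g)$, noted immediately after Theorem~\ref{thm:Birman-Hilden}, upgrades this to a symmetric isotopy $g_t$ from $g$ to $\id_{\Sigma_g}$. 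Descend to a path $\bar g_t$ in $\Homeo_+(S^2,\partial\mathcal A)$ from $g/\iota$ to $\id_{S^2}$; a parametrized version of the extension argument of (1) then produces a path $\bar G_t$ in $\Homeo_+(B,\mathcal A)$ starting at the descent of $\hat g$ and ending at $\id_B$ (the continuous choice of extension across $t$ is possible because $\pi_0\Homeo_\partial(B,\mathcal A)$ is trivial by an Alexander-type trick). Now apply Lemma~\ref{lem:lifting a homeomorphism}(2) to lift $\bar G_t$ starting at $\hat g$: the endpoint is a lift of $\id_B$, hence $\id_{V_g}$ or $\hat\iota$. The second case is excluded because $\hat\iota\not\simeq\id_{V_g}$ in $\Homeo_+(V_g)$ for $g\geq 2$ (it acts as $-I$ on $H_1(V_g)$), while our lifted isotopy lies in $\Homeo_+(V_g)$ and connects $\hat g\simeq\id_{V_g}$ to its endpoint; the endpoint must therefore be $\id_{V_g}$, and the lifted isotopy is the required symmetric isotopy.

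The main obstacle is the equivariant extension step in part~(1): turning a general extension into a symmetric one. This hinges on the contractibility of $\Homeo_\partial(V_g)$ together with a Smith-theoretic fixed-point argument for the $\Z/2$-action $F\mapsto\hat\iota F\hat\iota$, or, equivalently, on an explicit equivariant construction built from a symmetric system of meridian disks of $V_g$ and equivariant disk exchange.
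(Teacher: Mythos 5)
Your part (2) is essentially the paper's argument: reduce to a single symmetric homeomorphism isotopic to the identity, obtain a \emph{symmetric} isotopy on $\Sigma_g$ from the Birman--Hilden theorem, push it down to a path in $\Homeo_+(\partial B,\partial\mathcal{A})$, extend that path over $(B,\mathcal{A})$, lift by Lemma~\ref{lem:lifting a homeomorphism}(2), and rule out the endpoint $\hat{\iota}$ by the homological argument. One caveat: the extension of the boundary path over $(B,\mathcal{A})$ is not an ``Alexander-type trick'' (coning does not preserve the tangle); the paper cites \cite[Proposition A.4]{HiroseKin17} for exactly this step, and the connectivity of $\Homeo_\partial(B,\mathcal{A})$ is a fact that needs a reference, not a one-line argument.

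The genuine gap is in part (1), precisely at the step you identify as the main obstacle. Smith theory does not produce fixed points for $\mathbb{Z}/2$-actions on arbitrary contractible spaces: it requires finite-dimensionality (or a finitistic hypothesis), and $\mathcal{E}(f)\cong\Homeo_\partial(V_g)$ is infinite-dimensional. Free involutions on contractible infinite-dimensional spaces exist --- the antipodal map on $S^\infty$ is the standard example --- so ``contractible plus $\mathbb{Z}/2$-action implies fixed point'' is simply false in this setting, and the existence of a symmetric extension does not follow from your argument. (Even the contractibility of $\Homeo_\partial(V_g)$ is a substantial theorem of Hatcher, not a throwaway input.) The workable route is the one you mention only parenthetically, and it is the one the paper takes: fix the symmetric meridian disk system $D_1,\dots,D_{2g+2}$ with $\hat{\iota}(D_{2k-1})=D_{2k}$, invoke Edmonds' equivariant disk theorem \cite{Edmonds86} to find disks $D'_{2k-1}$ bounded by $f(C_{2k-1})$ that are disjoint from $\Fix(\hat{\iota})$ and from their $\hat{\iota}$-images, descend both disk systems to $(B,\mathcal{A})$, construct the extension $\hat{\phi}\in\Homeo_+(B,\mathcal{A})$ downstairs by matching the complementary balls (each containing one arc of $\mathcal{A}$), and then lift by Lemma~\ref{lem:lifting a homeomorphism}(1), composing with $\hat{\iota}$ if necessary. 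Without Edmonds' theorem or an equivalent equivariant surgery argument, part (1) is not proved.
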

\begin{center}
\begin{figure}[t]
\includegraphics[width=10cm]{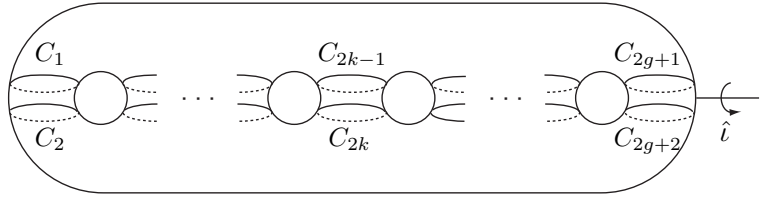}
\begin{picture}(400,0)(0,0)
\put(48,63){\begin{small}$C_1$\end{small}}
\put(48,31){\begin{small}$C_2$\end{small}}
\put(155,63){\begin{small}$C_{2k-1}$\end{small}}
\put(159,31){\begin{small}$C_{2k}$\end{small}}
\put(267,63){\begin{small}$C_{2g+1}$\end{small}}
\put(267,31){\begin{small}$C_{2g+2}$\end{small}}
\put(307,33){$\hat{\iota}$}
\end{picture}
\caption{The simple closed curves $C_1, \ldots, C_{2g+2}$ on $\Sigma_g$.} 
\label{fig_symmetric_loops}
\end{figure}
\end{center}
\begin{proof}
(1) 
Choose an arbitrary map $f \in \mathrm{E} \Homeo_+ (\Sigma_g) \cap \SHomeo_+ (\Sigma_g)$. 
Let $C_1, \ldots , C_{2g+2}$ be the simple closed curves on $\Sigma_g$ 
with $\iota (C_{2k-1}) = C_{2k}$ $(k = 1, \ldots , g+1)$ as shown in 
Figure \ref{fig_symmetric_loops}. 
There exist disks $D_1 , \ldots , D_{2g+2}$ in $V_g$ such that $\partial D_i = C_i$ 
$(i=1, \ldots, 2g+2)$, 
$D_i \cap D_j = \emptyset $ $(i \neq j)$, and $\hat{\iota} (D_{2k-1}) = D_{2k}$ 
$(k=1, \ldots , g+1)$.  
Let $A_k \subset \Sigma_g$ be the parallelism region between $C_{k-1}$ and $C_k$. 
Let $B_k$ be the 3-ball bounded by 
$D_{2k-1} \cup A_k \cup D_{2k}$. 
Set $C'_i := f (C_i)$. 
We note that it holds 
$$
\iota (C'_{2k-1}) = \iota (f (C_{2k-1}) ) = f (\iota (C_{2k-1})) = f(C_{2k}) 
= C'_{2k}
$$
for each $k=1, \ldots , g+1$, and 
$$
f (\mathrm{Fix} (\iota)) = f (\iota (\mathrm{Fix} ( \iota ))) 
= \iota (f (\mathrm{Fix} ( \iota ) )). 
$$
The latter implies that $f (\mathrm{Fix} (\iota)) = \mathrm{Fix} (\iota)$. 
By Edmonds \cite{Edmonds86} there exist disks 
 $D_{1}', D_{3}' , \ldots , D'_{2g+1}$ in $V_g$ such that 
$\partial D'_{2k-1} = C'_{2k-1}$, $D'_{2k-1} \cap \mathrm{Fix}(\hat{\iota}) = \emptyset $, and  
$D'_{2k-1} \cap \hat{\iota}(D'_{2k-1}) = \emptyset$ $(k = 1, \ldots , g+1)$. 
Set $D'_{2k} = \hat{\iota} (D'_{2k-1})$. 
We note that $\partial D'_{2k} = C'_{2k}$. 
Since $f$ preserves $\mathrm{Fix} (\iota)$, 
the self-homeomorphism $\phi$ of $\partial B$ induced from $f$ preserves 
$\partial \mathcal{A}$, that is, $\phi$ is an element of 
$\Homeo_+ (\partial B, \partial \mathcal{A})$. 
Set $E_k :=  q (D_{2k-1}) =  q (D_{2k})$ and 
$E'_k := q (D'_{2k-1}) = q (D'_{2k})$ 
$(k=1, \ldots , g+1)$. 
Since $\partial E'_i \cap \partial E'_j = \emptyset$ ($i \neq j$), 
the intersection of $E'_i$ and $E'_j$ consists only of simple closed curves. 
Since $B \setminus \mathcal{A}$ is irreducible, we can move 
$E'_1 , \ldots , E'_{g+1}$ by an isotopy in $B \setminus \mathcal{A}$ so as to satisfy 
$E'_i \cap E'_j = \emptyset$ ($i \neq j$). 
The disk $E_k$ cuts off from $B$ the 3-ball $q(B_k)$ that contains the single 
component $ q(B_k) \cap \mathcal{A}$ of $\mathcal{A}$. 
Similarly, the disk $E'_k$ cuts off from $B$ the 3-ball $q( f (B_k ))$ that contains 
the single component 
$ q( f (B_k)) \cap \mathcal{A} $ of $\mathcal{A}$. 
Therefore, we can find an extension 
$\hat{\phi} \in \Homeo_{+} (B, \mathcal{A})$ of 
$\phi$ with 
$\hat{\phi} (E_k) = E'_k$.  
By Lemma~\ref{lem:lifting a homeomorphism}, $\hat{\phi}$ 
lifts to $\hat {f} \in \SHomeo+ (V_g)$. 
By replacing $\hat{f}$ with $\hat{\iota} \circ \hat{f}$, if necessary, 
we have $\hat{f}|_{\Sigma_g} = f$.  

\noindent (2)
For $i = 0,1$, let $\hat{\phi}_i$ be the elements in $\Homeo_+ (B, \mathcal{A})$ induced from 
$\hat{f}_i$.  
%Set $f_i := \hat{f}_i |_{\Sigma_g}$ and 
%$\underline{f_i} := \underline{\hat{f}_i} |_{\partial B} \in \Homeo_+ (\partial B, \partial \mathcal{A})$. 
Set 
$\phi_i := \hat{\phi}_i |_{\partial B} \in \Homeo_+ (\partial B, \partial \mathcal{A})$. 
By Theorem~\ref{thm:Birman-Hilden} there exists a symmetric isotopy from 
 $\hat{f}_0 |_{\Sigma_g} $ to  
$\hat{f}_1 |_{\Sigma_g}$.  
This isotopy induces a path in $\Homeo_+ (\partial B, \partial \mathcal{A})$ from $\phi_0$
to $\phi_1$. 
By Proposition~A.4 in \cite{HiroseKin17}, there exists a path in $\Homeo_+ (B,  \mathcal{A})$ 
from $\hat{\phi}_0$ to $\hat{\phi}_1$. 
By Lemma~\ref{lem:lifting a homeomorphism}(2), this path lifts to a path in $\SHomeo_+(V_g)$ 
with the initial point $\hat{f}_0$. 
Since $\hat{\iota}$ is an involution, the terminal point of the path is either $\hat{f}_1$ or $\hat{\iota} \circ \hat{f}_1$. 
The latter is impossible as we assumed that $\hat{f}_0$ and $\hat{f}_1$ are isotopic, and so 
 $\hat{f}_0$ and $\hat{\iota} \circ \hat{f}_1$ cannot be isotopic. 
This completes the proof. 
\end{proof}

%\begin{lem}
%\label{lem:comoponents of EHomeo}
%Let $f$ be an element of $\mathrm{E} \Homeo_+ (\Sigma_g)$. 
%Then any element of $\Homeo_+ (\Sigma_g)$ isotopic to $f$ is contained in 
%$\mathrm{E} \Homeo_+ (\Sigma_g)$
%\end{lem}
%}

%\begin{lem}
%\label{lem:comoponents of EHomeo and SHomeo}
%Let $f_0$ and $f_1$ be elements of $\SHomeo_+ (\Sigma_g) \cap \mathrm{E} \Homeo_+ %(\Sigma_g)$. 
%If there exists a path in $\SHomeo_+ (\Sigma_g)$ from $f_0$ to $f_1$, 
%then that path is contained in $\mathrm{E} \Homeo_+ (\Sigma_g)$. 
%\end{lem}
%\begin{proof}
%This lemma follows from the property that 
%a self-homeomorphism of $\Sigma_g$ is contained in 
%$\mathrm{E} \Homeo_+ (\Sigma_g)$ is invariant under isotopy. 
%\end{proof}

%We remark that by 
%Lemma~\ref{lem:comoponents of EHomeo} we can regard 
%$\mathcal{H} (V_g)$ 
%as a subgroup of $\MCG  (\Sigma_g)$. 
 
Recall that both $\mathcal{H} (\Sigma_g) (= \pi_0 ( \SHomeo_+ \Sigma_g )) $ and 
$\MCG  (V_g) ( \cong \pi_0 ( \mathrm{E} \Homeo_+ (\Sigma_g)))$ can 
be regarded as subgroups of $\MCG (\Sigma_g)$.

\begin{thm}
\label{thm:hyperelliptic handlebody group as an intersection}
The map 
$$ \SHomeo_+ (V_g) \to \mathrm{E} \Homeo_+ (\Sigma_g) \cap \SHomeo_+ (\Sigma_g) $$ 
that takes $\hat{f} \in \SHomeo_+ (V)$ to $\hat{f}|_{\Sigma_g}$ 
induces an isomorphism 
$$\mathcal{H} (V_g) \xrightarrow{\cong} \MCG  (V_g) \cap \mathcal{H} (\Sigma_g).$$ 
\end{thm}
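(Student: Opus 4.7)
The plan is to prove the theorem by separately verifying that the restriction map is well-defined on components, and then establishing surjectivity and injectivity using the two parts of Lemma~\ref{lem:Hirose-Kin 2017 Prop A.6} respectively, together with the basic fact that both $\MCG(V_g)$ and $\mathcal{H}(\Sigma_g)$ embed into $\MCG(\Sigma_g)$.

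First, I would show well-definedness. Any $\hat{f}\in\SHomeo_+(V_g)$ restricts to a homeomorphism of $\Sigma_g$ that commutes with $\iota$ and extends to $V_g$, so $\hat{f}|_{\Sigma_g}\in \mathrm{E}\Homeo_+(\Sigma_g)\cap\SHomeo_+(\Sigma_g)$. Taking path-components, the class $[\hat{f}]\in\mathcal{H}(V_g)$ is sent to an element of $\MCG(\Sigma_g)$ that lies in both $\pi_0(\mathrm{E}\Homeo_+(\Sigma_g))\cong\MCG(V_g)$ and $\pi_0(\SHomeo_+(\Sigma_g))=\mathcal{H}(\Sigma_g)$; here I use that the restriction $\MCG(V_g)\to\MCG(\Sigma_g)$ is injective and, via Birman--Hilden (Theorem~\ref{thm:Birman-Hilden} and the subsequent remark), $\mathcal{H}(\Sigma_g)$ is identified with the centralizer of $[\iota]$ in $\MCG(\Sigma_g)$.

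For surjectivity, pick $[f]\in\MCG(V_g)\cap\mathcal{H}(\Sigma_g)$. Since $[f]\in\mathcal{H}(\Sigma_g)=\pi_0(\SHomeo_+(\Sigma_g))$ and this group embeds in $\MCG(\Sigma_g)$, I may choose a symmetric representative $f'\in\SHomeo_+(\Sigma_g)$ with $[f']=[f]$. Since $[f']\in\MCG(V_g)=\pi_0(\mathrm{E}\Homeo_+(\Sigma_g))$ and being extendable over $V_g$ is preserved under surface isotopy (extend the isotopy across a collar), $f'$ itself lies in $\mathrm{E}\Homeo_+(\Sigma_g)$. Applying Lemma~\ref{lem:Hirose-Kin 2017 Prop A.6}(1) to $f'\in\mathrm{E}\Homeo_+(\Sigma_g)\cap\SHomeo_+(\Sigma_g)$ yields $\hat{f}\in\SHomeo_+(V_g)$ with $\hat{f}|_{\Sigma_g}=f'$, so $[\hat{f}]\in\mathcal{H}(V_g)$ maps to $[f']=[f]$.

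For injectivity, suppose $[\hat{f}]\in\mathcal{H}(V_g)$ is sent to the identity, i.e., $\hat{f}|_{\Sigma_g}$ is isotopic to $\id_{\Sigma_g}$ in $\Homeo_+(\Sigma_g)$. By injectivity of the natural map $\MCG(V_g)\to\MCG(\Sigma_g)$, $\hat{f}$ is isotopic to $\id_{V_g}$ in $\Homeo_+(V_g)$. Applying Lemma~\ref{lem:Hirose-Kin 2017 Prop A.6}(2) to $\hat{f}$ and $\id_{V_g}$ (both elements of $\SHomeo_+(V_g)$) upgrades this to a symmetric isotopy, so $[\hat{f}]$ is trivial in $\mathcal{H}(V_g)$. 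The main conceptual point to get right is the compatibility of the various identifications of subgroups inside $\MCG(\Sigma_g)$, so that the surjectivity argument legitimately produces a representative lying in both $\mathrm{E}\Homeo_+(\Sigma_g)$ and $\SHomeo_+(\Sigma_g)$ simultaneously; once this is arranged, both halves of Lemma~\ref{lem:Hirose-Kin 2017 Prop A.6} complete the proof quickly.
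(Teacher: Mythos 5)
Your proof is correct and follows essentially the same route as the paper's: identify $\pi_0(\mathrm{E}\Homeo_+(\Sigma_g)\cap\SHomeo_+(\Sigma_g))$ with $\MCG(V_g)\cap\mathcal{H}(\Sigma_g)$ via the isotopy-invariance of extendability and Birman--Hilden, then deduce surjectivity from Lemma~\ref{lem:Hirose-Kin 2017 Prop A.6}(1) and injectivity from Lemma~\ref{lem:Hirose-Kin 2017 Prop A.6}(2). You simply spell out the details the paper leaves implicit.
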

\begin{proof}
Recall that $\pi_0 (\SHomeo_+ (V_g)) = \mathcal{H} (V_g)$.  
Note that any element of $\Homeo_+ (\Sigma_g)$ isotopic to an element of 
$\mathrm{E} \Homeo_+ (\Sigma_g)$ is contained in 
$\mathrm{E} \Homeo_+ (\Sigma_g)$. 
This fact, together with Theorem~\ref{thm:Birman-Hilden}, allows us 
to think of 
$ \pi_0 ( \mathrm{E} \Homeo_+ (\Sigma_g) \cap \SHomeo_+ (\Sigma_g) )$ 
as the subgroup of $\pi_0 (\mathrm{E} \Homeo_+ (\Sigma_g))$ 
consisting of the mapping classes that contain an element of 
$\SHomeo_+ (\Sigma_g)$. 
Therefore, we can identify 
$ \pi_0 ( \mathrm{E} \Homeo_+ (\Sigma_g) \cap \SHomeo_+ (\Sigma_g) )$ 
with $\MCG (V_g) \cap \mathcal{H} (\Sigma_g)$ in a natural way. 
%we have 
%
%\begin{eqnarray*}
%& \ & \pi_0 (\mathrm{E} \Homeo_+ (\Sigma_g) \cap \SHomeo_+ (\Sigma_g)) \\
%&=& 
%\pi_0 (\mathrm{E} \Homeo_+ (\Sigma_g) ) \cap \pi_0 (\SHomeo_+ (\Sigma_g)) \\
%&=& \MCG  (V_g) \cap \mathcal{H} (\Sigma_g).
% \end{eqnarray*}
The surjectivity and injectivity of 
the map $\mathcal{H} (V_g) \to \MCG  (V_g) \cap \mathcal{H} (\Sigma_g)$
now follow from 
Lemma~\ref{lem:Hirose-Kin 2017 Prop A.6}(1) and (2), respectively.  
\end{proof}

In consequence, 
we have the following canonical identifications from Theorems~\ref{thm:Kin-Hirose Theorem 2.6}, 
\ref{thm:H(V)/i} and \ref{thm:hyperelliptic handlebody group as an intersection}: 
$$ \mathcal{H} (V_g) = \MCG  (V_g) \cap \mathcal{H}(\partial V_g) = \Pi^{-1}(\Gamma(\SW_{2g+2})). $$
$$ \mathcal{H} (V_g) / \langle [ \hat{\iota} ] \rangle = \MCG  (B, \mathcal{A}_{g+1}) 
= \Gamma (\SW_{2g + 2}) = \SW_{2g + 2} / \langle  \Delta^2  \rangle  . $$

\section{Goeritz groups of bridge decompositions}
\label{section_Goeritz groups of bridge decompositions}

 Let $(L; S)= (B^+ \cap L) \cup_S (B^- \cap L)$ be an $n$-bridge decomposition 
of a link $L \subset S^3$. 
We define the {\it Goeritz group}, denoted by 
$\mathcal{G}(L; S)$ or $\mathcal{G}((B^+ \cap L) \cup_S (B^- \cap L))$,  
of the $n$-bridge decomposition by 
$$ \mathcal{G}(L; S) = \MCG  (S^3, B^+, L) . $$ 
Since the map 
$$\mathcal{G}(L; S) \rightarrow \MCG  (S, S \cap L ) \cong \MCG  
(\Sigma_{0, 2n})$$ 
sending $[f] \in \mathcal{G} (L; S) $ to $[f|_{S}] \in  \MCG  (S, S \cap L)  $ is injective, 
we regard $\mathcal{G}(L; S)$ as  a subgroup of $ \MCG (\Sigma_{0, 2n})$. 
When $(L; S)$ is {a} unique $n$-bridge decomposition of $L$ up to equivalence, 
we simply call $\mathcal{G}(L; S) $ the {\it $n$-bridge Goeritz group} of $L$, 
and we denote it by $\mathcal{G}(L; n)$.  
In this section, we discuss several basic properties of this group.

%\subsection{Goeritz groups of bridge decompositions and wicket groups on tangles}
\subsection{Relation to wicket groups on tangles}

Let $(B^+ \cap L) \cup_S (B^- \cap L)$ be an $n$-bridge decomposition of a link $L$. 
Then there exist braids $b, d \in \SB_{2n}$ such that 
$$(B^+ \cap L) \cup_S (B^- \cap L) =  \Bar{\mathcal{A}}^{d} \cup_S  {}^{b}\mathcal{A}.$$
We now describe Goeritz groups of the bridge decompositions in terms of wicket groups on tangles.

\begin{thm}
\label{thm_bridge_char}
Let $b,d \in \SB_{2n}$. 
Then we have 
$$
\mathcal{G}(\Bar{\mathcal{A}}^{d} \cup_S  {}^{b}\mathcal{A}) = 
\Gamma \bigl( \SW_{2n}({}^{d^{-1}}\mathcal{A}, {}^{b}\mathcal{A})\bigr) 
\cong 
\SW_{2n}({}^{d^{-1}}\mathcal{A}, {}^{b}\mathcal{A}) / \langle \Delta^2 \rangle. $$
\end{thm}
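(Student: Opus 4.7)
The plan is to reduce the claim to Theorem~\ref{thm:Kin-Hirose Theorem 2.6} by means of two conjugation arguments: one using the involution $\rho$ to transfer the $B^{+}$-side mapping class group to the $B^{-}$-side, and another using an extension of $\Gamma(c)$ to convert from the standard tangle $\mathcal{A}$ to a general trivial tangle ${}^{c}\mathcal{A}$.

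First I would unpack the definition, namely
\[
\mathcal{G}(\Bar{\mathcal{A}}^{d} \cup_{S} {}^{b}\mathcal{A}) = \MCG(B^{+}, \Bar{\mathcal{A}}^{d}) \cap \MCG(B^{-}, {}^{b}\mathcal{A}),
\]
both factors being regarded as subgroups of $\MCG(S, S \cap L) = \MCG(\Sigma_{0,2n})$ via restriction to $S$. Since $\rho|_{S} = \id_{S}$ and $\rho(\Bar{\mathcal{A}}^{d}) = {}^{d^{-1}}\mathcal{A}$, conjugation by $\rho$ induces an isomorphism $\Homeo_{+}(B^{+}, \Bar{\mathcal{A}}^{d}) \to \Homeo_{+}(B^{-}, {}^{d^{-1}}\mathcal{A})$ whose effect on restrictions to $S$ is trivial. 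Hence, as subgroups of $\MCG(\Sigma_{0,2n})$,
\[
\MCG(B^{+}, \Bar{\mathcal{A}}^{d}) = \MCG(B^{-}, {}^{d^{-1}}\mathcal{A}),
\]
so the Goeritz group becomes $\MCG(B^{-}, {}^{d^{-1}}\mathcal{A}) \cap \MCG(B^{-}, {}^{b}\mathcal{A})$.

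Next I would prove the auxiliary equality
\[
\MCG(B^{-}, {}^{c}\mathcal{A}) = \Gamma\bigl(\SW_{2n}({}^{c}\mathcal{A})\bigr) \qquad (c \in \SB_{2n}).
\]
By Remark~\ref{rem_extension} there is $\Phi \in \Homeo_{+}(B^{-})$ with $\Phi(\mathcal{A}) = {}^{c}\mathcal{A}$ whose restriction to $S$ represents $\Gamma(c)$. Conjugation by $\Phi$ carries $\Homeo_{+}(B^{-}, \mathcal{A})$ isomorphically onto $\Homeo_{+}(B^{-}, {}^{c}\mathcal{A})$; passing to restrictions to $S$, this descends to conjugation by $\Gamma(c)$ inside $\MCG(\Sigma_{0,2n})$. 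Combining this with the identification $\MCG(B^{-}, \mathcal{A}) = \Gamma(\SW_{2n})$ from Theorem~\ref{thm:Kin-Hirose Theorem 2.6} and with Lemma~\ref{lem_wicket-con}, we obtain
\[
\MCG(B^{-}, {}^{c}\mathcal{A}) = \Gamma(c)\,\Gamma(\SW_{2n})\,\Gamma(c)^{-1} = \Gamma\bigl(c\,\SW_{2n}\,c^{-1}\bigr) = \Gamma\bigl(\SW_{2n}({}^{c}\mathcal{A})\bigr).
\]

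Finally, applying this auxiliary equality with $c = d^{-1}$ and $c = b$ and intersecting the two resulting subgroups, we are done once we know that $\Gamma$ commutes with the intersection. This follows from the fact that $\ker\Gamma = \langle \Delta^{2} \rangle$ lies inside $\SW_{2n}(\mathcal{T})$ for every trivial tangle $\mathcal{T}$, hence
\[
\mathcal{G}(\Bar{\mathcal{A}}^{d} \cup_{S} {}^{b}\mathcal{A}) = \Gamma\bigl(\SW_{2n}({}^{d^{-1}}\mathcal{A}) \cap \SW_{2n}({}^{b}\mathcal{A})\bigr) = \Gamma\bigl(\SW_{2n}({}^{d^{-1}}\mathcal{A}, {}^{b}\mathcal{A})\bigr),
\]
and the first isomorphism theorem, applied with kernel $\langle \Delta^{2} \rangle$, yields the claimed quotient description. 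The main technical subtlety is verifying that the two conjugation-by-extension arguments are compatible with the implicit identifications $S = \partial B^{+} = \partial B^{-}$ and the labeling conventions on the marked points fixed in Section~\ref{subsection_Bridge-decompositions}; once those conventions are pinned down, each step is a routine check.
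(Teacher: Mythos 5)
Your proof is correct, and it rests on exactly the same ingredients as the paper's: the formulas $\rho(\Bar{\mathcal{A}}^{d}) = {}^{d^{-1}}\mathcal{A}$ to move the $B^{+}$-side to $B^{-}$, the extension construction of Remark~\ref{rem_extension}, Lemma~\ref{lem_wicket-con}, and the fact that $\langle \Delta^{2}\rangle = \ker\Gamma$ lies in every $\SW_{2n}(\mathcal{T})$. The only difference is organizational: the paper argues element-by-element (take $\phi$ in the Goeritz group, choose $b_{\phi}$ with $\Gamma(b_{\phi})=\phi$, and translate the two tangle-preservation conditions into ${}^{b}\mathcal{A} = {}^{b_{\phi}b}\mathcal{A}$ and ${}^{d^{-1}}\mathcal{A} = {}^{b_{\phi}d^{-1}}\mathcal{A}$, leaving the reverse inclusion as ``similar''), whereas you work at the level of subgroups, isolating the single-tangle identity $\MCG(B^{-}, {}^{c}\mathcal{A}) = \Gamma\bigl(\SW_{2n}({}^{c}\mathcal{A})\bigr)$ and then intersecting. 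Your version has the small virtue of making explicit the point the paper leaves implicit, namely that $\Gamma$ commutes with the intersection precisely because its kernel is contained in both wicket subgroups; conversely, you should note (as your appeal to Remark~\ref{rem_extension} tacitly requires) that an extension $\Phi$ of a representative of $\Gamma(c)$ satisfies $\Phi(\mathcal{A}) = {}^{c}\mathcal{A}$ even though the braid produced by the remark is only $c$ up to a power of $\Delta^{2}$ — this is harmless since $\Delta^{2}$ fixes every trivial tangle.
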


\begin{proof}
We prove that 
$\mathcal{G}(\Bar{\mathcal{A}}^{d} \cup_S  {}^{b}\mathcal{A})  \subset 
\Gamma \bigl( \SW_{2n}({}^{d^{-1}}\mathcal{A}, {}^{b}\mathcal{A})\bigr)$. 
Take an element 
$\phi \in \mathcal{G} (\Bar{\mathcal{A}}^{d} \cup_S  {}^{b}\mathcal{A}) $. 
Then there exists a braid $b_{\phi}  \in \SB_{2n}$ 
with $\Gamma(b_{\phi}) = \phi$. 
By abuse of notation, 
we regard $\phi$ as a representative of $\phi$. 
Notice that $\phi$ extends to a self-homeomorphism of 
$S^3 = B^+ \cup_S B^-$ 
preserving $\Bar{\mathcal{A}}^{d} $ and $ {}^{b}\mathcal{A}$ setwise. 
By Remark \ref{rem_extension} and Lemma~\ref{lem_trivial-tangle}, 
it follows that 
$$\Bar{\mathcal{A}}^{d} = \Bar{\mathcal{A}}^{d b_{\phi}^{-1}} \hspace{2mm}\mbox{and} \hspace{2mm}
{}^{b}{\mathcal{A}}={}^{b_{\phi} b}{\mathcal{A}}.$$
The second equality implies that 
%$b_{\phi} \in \SW_{2n}({}^{b} \mathcal{A}) = b (\SW_{2n}) b^{-1}$, 
%see Lemma \ref{lem_wicket-con}. 
$b_{\phi} \in \SW_{2n}({}^{b} \mathcal{A}) $.

Now, 
consider the image of $\Bar{\mathcal{A}}^{d}= \Bar{\mathcal{A}}^{d b_{\phi}^{-1}} $ under $\rho$. 
We have 
$${}^{d^{-1}} \mathcal{A} = \rho(\Bar{\mathcal{A}}^{d})= \rho(\Bar{\mathcal{A}}^{d b_{\phi}^{-1}})= 
 {}^{b_{\phi} d^{-1}} \mathcal{A} .$$
%Its image under $\rho$ is 
%$ {}^{b_1^{-1}} \mathcal{A} = {}^{b_{\phi} b_{1}^{-1}} \mathcal{A}$. 
Hence we have 
$\mathcal{A} = {}^{d b_{\phi} d^{-1}} \mathcal{A}$, 
which implies that 
$ d b_{\phi} d^{-1} \in \SW_{2n}$. 
Thus $b_{\phi} \in d^{-1} ( \SW_{2n} ) d = \SW_{2n} ({}^{d^{-1}} \mathcal{A})$ 
by Lemma~\ref{lem_wicket-con}. 
Putting them together, we have 
$$b_{\phi} \in   \SW_ {2n}({}^{d^{-1}} \mathcal{A}) \cap \SW_{2n}({}^{b} \mathcal{A}) =  \SW_{2n}({}^{d^{-1}}\mathcal{A}, {}^{b}\mathcal{A}),$$ 
which says that 
$\phi \in \Gamma \bigl( \SW_{2n}({}^{d^{-1}}\mathcal{A}, {}^{b}\mathcal{A})\bigr)$. 
We are done.

The proof of 
$\mathcal{G}(\Bar{\mathcal{A}}^{d} \cup_S  {}^{b}\mathcal{A})  \supset 
\Gamma \bigl( \SW_{2n}({}^{d^{-1}}\mathcal{A}, {}^{b}\mathcal{A})\bigr)$ 
is similar. 
\end{proof}

Let $\Bar{\mathcal{A}}^{d} \cup_S  {}^{b}\mathcal{A}$ be an $n$-bridge decomposition of a link $L$ for some $b,d \in \SB_{2n}$. 
Then it is equivalent to the $n$-bridge decomposition 
$ \Bar{\mathcal{A}} \cup_S   {}^{d b}\mathcal{A}$. 
 By Lemma~\ref{lem_wicket-conj} and Theorem~\ref{thm_bridge_char}, 
one sees that their Goeritz groups $\mathcal{G}(\Bar{\mathcal{A}}^{d} \cup_S  {}^{b}\mathcal{A})$ and 
$\mathcal{G}( \Bar{\mathcal{A}} \cup_S   {}^{d b}\mathcal{A})$ are conjugate to each other in  $\MCG (\Sigma_{0, 2n})$.

\subsection{Relation to hyperelliptic Goeritz groups of Heegaard splittings}
\label{subsec:Relation to hyperelliptic Goeritz groups of Heegaard splittings}

Let $(M; \Sigma)= V^+ \cup_{\Sigma} V^-$ be a genus-$g$ Heegaard splitting 
with $g \geq 2$. 
Assume that there exists an involution 
$$\hat{\iota}: (M, V^+) \rightarrow (M, V^+) $$ 
such that 
$ \hat{\iota}|_{\Sigma}$ is a hyperelliptic involution on the Heegaard surface $\Sigma$. 
By definition 
$\hat{\iota}|_{V^+}$ and $\hat{\iota}|_{V^-}$ are 
hyperelliptic involutions of the handlebodies $V^+$ and $V^-$, respectively. 
Let $\SHomeo_+ (M, V^+)$ denote the centralizer in $\Homeo_+ (M, V^+)$ of $\hat{\iota}$. 
The {\it hyperelliptic Goeritz group} $\mathcal{HG}_{\hat{\iota}}(M; \Sigma) $ 
is then defined by 
$$\mathcal{HG}_{\hat{\iota}}(M; \Sigma)= \pi_0 (\SHomeo_+ (M, V^+)). $$
See Remark \ref{rem:notation for the hyperelliptic Goeritz groups} for this notation. 
Let $q: M \to M / \hat{\iota} = S^3$ be the projection. 
Set $B^\pm := q (V^\pm)$, $S := q (\Sigma)$, 
$L := q (\Fix (\hat{\iota})) $. 
We note that $(L; S) = (B^+, B^+ \cap L) \cup_S (B^-, B^- \cap L)$ 
is a $(g + 1)$-bridge decomposition of the link $L \subset S^3$.

The following theorem, which  implies Theorem~\ref{introthm:HG(M;V)/i}, 
is again a consequence of 
Theorem~\ref{thm:Birman-Hilden} and 
Lemma~\ref{lem:lifting a homeomorphism}  as in Theorem~\ref{thm:H(V)/i}. 

\begin{thm}
\label{thm:HG(M;V)/i}
The natural map 
$$ \mathcal{HG}_{\hat{\iota}}(M ;\Sigma) \to \mathcal{G}(L ; S) $$
is surjective and its kernel is $\langle [\hat \iota] \rangle$. 
Thus, we have 
$\mathcal{HG}_{\hat{\iota}}(M ; \Sigma) / \langle [\hat{\iota}] 
\rangle \cong \mathcal{G} (L; S)$. 
\end{thm}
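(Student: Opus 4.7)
The plan is to imitate the proof of Theorem~\ref{thm:H(V)/i} while handling the two sides of the Heegaard splitting simultaneously. The branched cover $q\colon M \to S^3$ restricts over each handlebody to a branched cover $q|_{V^\pm}\colon V^\pm \to B^\pm$ whose deck transformation is the hyperelliptic involution $\hat{\iota}|_{V^\pm}$ and whose branch locus is the trivial $(g+1)$-tangle $B^\pm \cap L$. This places each side in the setting of Lemma~\ref{lem:lifting a homeomorphism} (after conjugating via Lemma~\ref{lem:Pantaolini-Piergallini} if desired), and the two one-sided lifts must be reconciled along $\Sigma$ using the deck transformation $\iota = \hat{\iota}|_\Sigma$ of the two-fold cover $\Sigma \to S$.

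For surjectivity, given $[\phi] \in \mathcal{G}(L;S)$ with representative $\phi \in \Homeo_+(S^3, B^+, L)$, I first apply Lemma~\ref{lem:lifting a homeomorphism}(1) to $\phi|_{B^+}$ to obtain $\hat{\phi}^+ \in \SHomeo_+(V^+)$, and similarly obtain $\hat{\phi}^- \in \SHomeo_+(V^-)$ lifting $\phi|_{B^-}$. The two boundary restrictions $\hat{\phi}^\pm|_\Sigma$ are both lifts of $\phi|_S$ under $\Sigma \to S$, so they differ by at most the deck transformation $\iota$. If they do differ by $\iota$, I replace $\hat{\phi}^-$ by $\hat{\iota} \circ \hat{\phi}^-$; this remains orientation-preserving and fiber-preserving, still projects to $\phi|_{B^-}$ since $q \circ \hat{\iota} = q$, and now agrees with $\hat{\phi}^+$ on $\Sigma$. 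Gluing produces $\hat{\phi} \in \SHomeo_+(M, V^+)$ projecting to $\phi$, which proves surjectivity.

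For the kernel, suppose $\hat{f} \in \SHomeo_+(M, V^+)$ descends to a map $\phi := q \circ \hat{f} \circ q^{-1}$ that is isotopic to $\id_{S^3}$ through maps in $\Homeo_+(S^3, B^+, L)$, via an isotopy $\{\phi_t\}_{t \in [0,1]}$. Restricting to the two sides yields paths $\{\phi_t|_{B^\pm}\}$ in $\Homeo_+(B^\pm, B^\pm \cap L)$, and Lemma~\ref{lem:lifting a homeomorphism}(2) lifts each uniquely to a path $\{\hat{f}_t^\pm\}$ in $\SHomeo_+(V^\pm)$ with $\hat{f}_0^\pm = \hat{f}|_{V^\pm}$. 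Applying the same uniqueness of path lifts on the boundary cover $\Sigma \to S$ to the two paths $\{\hat{f}_t^\pm|_\Sigma\}$, which share initial value $\hat{f}|_\Sigma$ and both cover $\{\phi_t|_S\}$, forces $\hat{f}_t^+|_\Sigma = \hat{f}_t^-|_\Sigma$ for all $t$. Thus the two one-sided isotopies glue to an isotopy $\{\hat{f}_t\}$ in $\SHomeo_+(M, V^+)$. The terminal map $\hat{f}_1$ covers $\id_{S^3}$, so $\hat{f}_1 \in \{\id_M, \hat{\iota}\}$, yielding $[\hat{f}] \in \langle [\hat{\iota}] \rangle$.

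The main obstacle is ensuring that the two one-sided lifts fit together to give genuine homeomorphisms and isotopies of $M$. This is resolved by the single observation that the only ambiguity in matching on $\Sigma$ is the deck transformation $\iota$, which can be absorbed by post-composing one side with $\hat{\iota}$ (for surjectivity) or ruled out by uniqueness of path lifts starting from a common boundary value (for the kernel). Once this compatibility is in hand, both directions run in direct parallel with the handlebody case of Theorem~\ref{thm:H(V)/i}.
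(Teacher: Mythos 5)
Your proof is correct and follows exactly the route the paper intends: the paper offers no written argument beyond asserting that the theorem is a consequence of Theorem~\ref{thm:Birman-Hilden} and Lemma~\ref{lem:lifting a homeomorphism} ``as in Theorem~\ref{thm:H(V)/i}'', and your write-up is precisely the natural two-sided adaptation of that handlebody argument, with the only nontrivial point --- reconciling the two lifts along $\Sigma$ via the deck transformation for surjectivity, and via uniqueness of path lifting for the kernel --- handled correctly.
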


We denote by  
$\mathrm{E}^\pm \Homeo_+ ( \Sigma )$ the subgroup of $\Homeo_+ (\Sigma)$ 
consisting of 
homeomorphisms of  $\Sigma$ that extend to those of $V^{\pm}$. 
%The following lemma follows from 
%the same argument as in Lemma~\ref{lem:comoponents of EHomeo}. 
%\begin{lem}
%\label{lem:intersection of EpmHomeo and SHomeo}
%Let $X^\pm$ be a connected component of 
%$\mathrm{E}^\pm \Homeo_+ ( \Sigma ) \cap \SHomeo_+ (\Sigma)$. 
%If $X ^+ \cap X^-$ is not empty, it is connected. 
%\end{lem}

The following theorem, which implies
Theorem~\ref{introthm:hyperelliptic Goeritz group as an intersection}, 
 corresponds to 
Theorem~\ref{thm:hyperelliptic handlebody group as an intersection} 
for hyperelliptic handlebody groups. 

\begin{thm}
\label{thm:hyperelliptic Goeritz group as an intersection} 
The map 
$$ \SHomeo_+ (M, V^+) \to 
\mathrm{E}^+ \Homeo_+ (\Sigma) \cap \mathrm{E}^- \Homeo_+ (\Sigma) \cap 
\SHomeo_+ (\Sigma) $$ 
that takes $\hat{f} \in \SHomeo_+ (M, V^+)$ to $\hat{f}|_{\Sigma}$ 
induces an isomorphism 
$$\mathcal{HG}_{\hat{\iota}} (M ; \Sigma) \xrightarrow{\cong} 
\mathcal{G} (M ; \Sigma) \cap \mathcal{H} ( \Sigma ).$$ 
\end{thm}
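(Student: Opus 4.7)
The plan is to mirror the proof of Theorem~\ref{thm:hyperelliptic handlebody group as an intersection}, now treating $V^+$ and $V^-$ in tandem. First, I would identify the target $\pi_0(\mathrm{E}^+\Homeo_+(\Sigma) \cap \mathrm{E}^-\Homeo_+(\Sigma) \cap \SHomeo_+(\Sigma))$ with $\mathcal{G}(M;\Sigma) \cap \mathcal{H}(\Sigma) \subset \MCG(\Sigma)$, following exactly the argument in the proof of Theorem~\ref{thm:hyperelliptic handlebody group as an intersection}: since membership in $\mathrm{E}^\pm \Homeo_+(\Sigma)$ is preserved under isotopy in $\Homeo_+(\Sigma)$, it is a property of the mapping class, and then Theorem~\ref{thm:Birman-Hilden} furnishes the identification.

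For surjectivity, given $f$ in the above intersection, I would apply Lemma~\ref{lem:Hirose-Kin 2017 Prop A.6}(1) once to $V^+$ and once to $V^-$ with respect to the hyperelliptic involutions $\hat{\iota}|_{V^\pm}$. This produces symmetric extensions $\hat{f}^\pm \in \SHomeo_+(V^\pm)$ with $\hat{f}^\pm|_\Sigma = f$. Since they agree on the common boundary $\Sigma$, they glue to a homeomorphism $\hat{f}:M \to M$ that preserves $V^+$ (and $V^-$), commutes with $\hat{\iota}$ on each side, and hence lies in $\SHomeo_+(M, V^+)$; this $\hat{f}$ restricts to $f$ on $\Sigma$.

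For injectivity, rather than grinding through a direct extension-and-gluing argument (where the main obstacle would be arranging symmetric isotopies on $V^+$ and $V^-$ to agree on $\Sigma$), I would deduce it from the commutative square
$$
\begin{array}{ccc}
\mathcal{HG}_{\hat{\iota}}(M;\Sigma) & \longrightarrow & \mathcal{H}(\Sigma) \\
\big\downarrow & & \big\downarrow \\
\mathcal{G}(L;S) & \hookrightarrow & \MCG(\Sigma_{0, 2g+2})
\end{array}
$$
whose horizontal arrows are the quotient maps by $\langle[\hat{\iota}]\rangle$ and $\langle[\iota]\rangle$ furnished by Theorems~\ref{thm:HG(M;V)/i} and \ref{thm:Birman-Hilden}, and whose vertical arrows are the restrictions to $\Sigma$ and $S$. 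Commutativity is immediate from the compatibility of restriction with the projection $q$. The bottom arrow is injective since $\mathcal{G}(L;S)$ is defined as a subgroup of $\MCG(\Sigma_{0, 2g+2})$, so the kernel of the diagonal composition equals $\langle[\hat{\iota}]\rangle$. If an element lies in the kernel of the left vertical arrow, then the diagonal sends it to the identity, forcing it into $\langle[\hat{\iota}]\rangle$; but $[\hat{\iota}]$ restricts on $\Sigma$ to $[\iota] \neq 1 \in \mathcal{H}(\Sigma)$, so the kernel is trivial, yielding injectivity and completing the proof.
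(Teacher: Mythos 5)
Your proposal is correct, and its first two steps coincide with the paper's own proof: the identification of $\pi_0(\mathrm{E}^+\Homeo_+(\Sigma)\cap\mathrm{E}^-\Homeo_+(\Sigma)\cap\SHomeo_+(\Sigma))$ with $\mathcal{G}(M;\Sigma)\cap\mathcal{H}(\Sigma)$ is carried out exactly as you describe, and surjectivity is obtained from Lemma~\ref{lem:Hirose-Kin 2017 Prop A.6}(1); you merely make explicit the two-sided application to $V^+$ and $V^-$ (with respect to $\hat{\iota}|_{V^\pm}$) and the gluing along $\Sigma$, which the paper leaves implicit. Where you genuinely diverge is injectivity. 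The paper deduces it from Lemma~\ref{lem:Hirose-Kin 2017 Prop A.6}(2), i.e.\ from the statement that isotopic elements of $\SHomeo_+(V_g)$ are symmetrically isotopic, tacitly invoking its analogue for the pair $(M,V^+)$ --- precisely the ``arranging symmetric isotopies on $V^+$ and $V^-$ to agree on $\Sigma$'' issue you flag. Your alternative is a diagram chase: an element in the kernel of the restriction $\mathcal{HG}_{\hat{\iota}}(M;\Sigma)\to\mathcal{H}(\Sigma)$ dies in $\MCG(\Sigma_{0,2g+2})$, hence by Theorem~\ref{thm:HG(M;V)/i} and the injectivity of $\mathcal{G}(L;S)\hookrightarrow\MCG(S,S\cap L)$ it lies in $\langle[\hat{\iota}]\rangle$, and $[\hat{\iota}]$ is not in the kernel since it restricts to $[\iota]\neq 1$. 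This is valid and arguably cleaner, since it outsources the symmetric-isotopy work to the already-established kernel computation of Theorem~\ref{thm:HG(M;V)/i} rather than rerunning it for the Heegaard pair; the trade-off is that it is tied to the branched-covering setup, whereas the paper's route applies the same two lemmas uniformly to both the handlebody-group and the Goeritz-group statements. One cosmetic slip: your verbal description of the square (``horizontal arrows are the quotient maps, vertical arrows are the restrictions'') is transposed relative to the diagram you display, in which the quotients by $\langle[\hat{\iota}]\rangle$ and $\langle[\iota]\rangle$ appear as the vertical arrows; the argument itself is unaffected.
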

\begin{proof}
Recall that $\pi_0 (\SHomeo_+ (M, V^+)) 
= \mathcal{HG}_{\hat{\iota}} (M ; \Sigma)$.  
As in the proof of 
Theorem~\ref{thm:hyperelliptic handlebody group as an intersection}, 
we can think of 
$$ \pi_0 ( \mathrm{E}^+ \Homeo_+ (\Sigma) \cap \mathrm{E}^- \Homeo_+ (\Sigma) \cap 
\SHomeo_+ (\Sigma) ) $$ 
as the subgroup of 
$$ \pi_0 ( \mathrm{E}^+ \Homeo_+ (\Sigma) \cap \mathrm{E}^- \Homeo_+ (\Sigma) ) $$
consisting of the mapping classes that contain an element of 
$\SHomeo_+ ( \Sigma )$. 
Therefore, we can identify 
$$ \pi_0 ( \mathrm{E}^+ \Homeo_+ (\Sigma) \cap \mathrm{E}^- \Homeo_+ (\Sigma) \cap 
\SHomeo_+ (\Sigma) ) $$ 
with $\mathcal{G} (M ; \Sigma) \cap \mathcal{H} (\Sigma)$.

The surjectivity and injectivity of 
the map $\mathcal{HG}_{\hat{\iota}} (M ; \Sigma) \to 
\mathcal{G} (M ; \Sigma) \cap \mathcal{H} ( \Sigma )$
follow from Lemma~\ref{lem:Hirose-Kin 2017 Prop A.6}(1) and (2), respectively.  
\end{proof}

We can summarize the above discussion as follows. 
Let $L$ be a link in $S^3$ admitting an $n$-bridge decomposition 
$(L; S) = \Bar{\mathcal{A}} \cup_S {}^b \mathcal{A}$ for some $b \in \SW_{2n}$.  
Let  
$q: M_L \rightarrow S^3$ be the $2$-fold covering branched over $L$, and 
set $\Sigma := q^{-1} (S)$. 
The preimage of $q$  of the genus-$0$ Heegaard splitting $S^3= B^+ \cup_S B^-$  gives 
a genus-$(n - 1)$ Heegaard splitting 
$$(M_L; \Sigma )= q^{-1}(B^+) \cup_{\Sigma} q^{-1}(B^-).$$
We call  $(M_L; \Sigma)$ the {\it Heegaard splitting of} $M_L$ 
{\it associated with the bridge decomposition} $(L; S)$. 
Let 
$T: M_L \rightarrow M_L$ 
 be the non-trivial deck transformation of $q: M_L \rightarrow S^3$. 
We note that  
$T|_{\Sigma}: \Sigma \rightarrow \Sigma$
is a  hyperelliptic involution. 
Let $\mathcal{H}(\Sigma) $ be 
the hyperelliptic mapping class group associated with $T|_{\Sigma}$. 
By Theorems~\ref{thm_bridge_char}, 
\ref{thm:HG(M;V)/i} and \ref{thm:hyperelliptic Goeritz group as an intersection}, 
we have the following canonical identifications:
$$\mathcal{HG}_T(M_L; \Sigma) = 
 \mathcal{G}(M_L; \Sigma) \cap \mathcal{H}(\Sigma) = 
\Pi^{-1}(\Gamma(\SW_{2n} (\mathcal{A}, {}^b \mathcal{A}))) .$$
$$\mathcal{HG}_T(M_L; \Sigma) / \langle [T] \rangle 
 = \mathcal{G} (L; S) 
 = \Gamma ( \SW_{2n} (\mathcal{A}, {}^b \mathcal{A}) )  
= \SW_{2n} (\mathcal{A}, {}^b \mathcal{A}) / \langle \Delta^2 \rangle . $$
%\begin{eqnarray*}
%\mathcal{HG}_T(M_L; \Sigma) / \langle [T] \rangle &=& 
%\mathcal{G} (L; S) = \MCG  (B^+, \mathcal{A}) \cap \MCG  (B^-, {}^b \mathcal{A}) \\
%&=& \Gamma ( \SW_{2g+2} (\mathcal{A}, {}^b \mathcal{A}) )  
%= \SW_{2g+2} (\mathcal{A}, {}^b \mathcal{A}) / \langle \Delta^2 \rangle .
%\end{eqnarray*}

\begin{remark}
\label{rem:notation for the hyperelliptic Goeritz groups}
Let $(M; \Sigma)= V^+ \cup_{\Sigma} V^-$ be a genus-$g$ Heegaard splitting with $g \geq 2$. 
Assume that there exists an involution 
$$\hat{\iota}: (M, V^+) \rightarrow (M, V^+) $$ 
such that 
$ \hat{\iota}|_{\Sigma}$ is a hyperelliptic involution on the Heegaard surface $\Sigma$.  
Some readers might wonder why 
we write $\mathcal{HG}_{\hat{\iota}}(M; \Sigma) $ rather than $\mathcal{HG} (M; \Sigma) $, whereas 
a hyperelliptic mapping class group and a hyperelliptic handlebody group
are simply denoted by $\mathcal{H} (\Sigma_g)$ and $\mathcal{H} (V_g)$, respectively. 
The reason for the case of $\mathcal{H} (\Sigma_g)$ is that 
any two hyperelliptic involutions of a closed surface $\Sigma_g$ are conjugate in 
$\MCG  (\Sigma_g)$ $($see e.g. Farb-Margalit \cite[Proposition 7.15]{FarbMargalit12}$)$. 
Thus, any two hyperelliptic mapping class groups are conjugate. 
In particular, the structure of the group $\mathcal{H} (\Sigma_g)$ does not depend on the choice of 
a particular hyperelliptic involution of $\Sigma_g$. 
The same fact holds for hyperelliptic handlebody groups as well 
by Lemma $\ref{lem:Pantaolini-Piergallini}$. 
In the case of hyperelliptic Goeritz groups, however, the situation is more subtle. 
In fact, the conjugacy class of  
the above involution $\hat{\iota}$ in the Goeritz group 
does depend on the choice of $\hat{\iota}$ as we shall see now. 

Let $(H, S)$ be the $2$-bridge decomposition of the Hopf link $H \subset S^3$. 
Let $p, p'$ be points of $S \cap H$ that are contained in different components of $H$. 
Consider the two $4$-bridge decompositions $(H; S_{(p ,2)})$ and 
$ (H; (S_{(p ,1)})_{( p',1)})$ of $H$. 
%$$ (H; S_{(p_1,2)}) =  (B^+_1 \cap H) \cup_{S_1} (B^-_1 \cap H), $$
%$$ (H; (S_{(p_1,1)})_{(p_2,1)}) =  (B^+_2 \cap H) \cup_{S_2} 
%(B^-_2 \cap H). $$
Apparently, these bridge decompositions are not equivalent. 
Let $q: \mathbb{RP}^3 \rightarrow S^3$ be the $2$-fold covering branched over $H$. 
Let $(\mathbb{RP}^3; \Sigma_1)$ and  $(\mathbb{RP}^3; \Sigma_2)$ be the Heegaard splittings of $\mathbb{RP}^3$ 
associated with $(H; S_{(p ,2)})$ and $(H; (S_{( p ,1)})_{( p',1)})$, respectively. 
%The preimage of $q$  of the genus-$0$ Heegaard splitting $S^3= B_i^+ \cup_{S_i} B_i^-$ 
%gives a genus-$3$ Heegaard splitting 
%$$(\mathbb{RP}^3 ; \Sigma_i ) = q^{-1}(B_i^+) \cup_{\Sigma_i} q^{-1}(B_i^-).$$
Let $T: \mathbb{RP}^3 \rightarrow \mathbb{RP}^3$ 
 be the non-trivial deck transformation of $q: \mathbb{RP}^3 \rightarrow S^3$. 
Then    
$T|_{\Sigma}: \Sigma \rightarrow \Sigma$ and 
$T|_{\Sigma'}: \Sigma' \rightarrow \Sigma'$  
are hyperelliptic involutions on $\Sigma$ and $\Sigma'$, respectively. 
Recall, by the way, that due to Bonahon-Otal \cite{BonahonOtal83}, $(\mathbb{RP}^3 ; \Sigma_1)$ and $(\mathbb{RP}^3 ; \Sigma_2)$
are equivalent. 
Therefore, for the unique genus-$3$ Heegaard splitting $(\mathbb{RP}^3, \Sigma)$ of $\mathbb{RP}^3$, 
we can consider the two involutions: one corresponds to $T$ for $(\mathbb{RP}^3, \Sigma)$ 
and the other corresponds to $T$ for $(\mathbb{RP}^3, \Sigma')$. 
We denote the former involution by $\hat{\iota}$ and the latter by $\hat{\iota}'$. 
Then $\hat{\iota}$ and $\hat{\iota}'$ can no longer be conjugate in 
the Goeritz group  $\mathcal{G}  (\mathbb{RP}^3; \Sigma)$, for 
$ (H; S_{(p,2)})$ and $ (H; (S_{(p,1)})_{(p',1)})$ are not equivalent. 
Therefore, we cannnot conclude that 
the hyperelliptic Goeritz groups
$\mathcal{HG}_{\hat{\iota}} (\mathbb{RP}^3; \Sigma)$ and 
$\mathcal{HG}_{\hat{\iota}'} (\mathbb{RP}^3; \Sigma)$ 
are conjugate. 
\end{remark}

%\label{thm_main}

Let $S$ be an $n$-bridge sphere of a link $L \subset S^3$. 
Take a point $p \in L \cap S$. 
Let 
$$(L; S_{(p,k)}) = (B^+_{(p,k)} \cap L) \cup_{S_{(p,k)}} (B^-_{(p,k)} \cap L) $$ 
be the bridge decomposition of $L$, 
where $S_{(p,k)}$ is the bridge sphere of $L$ obtained from the 
$k$-fold stabilization of $S$ near $p$. 
Consider the Heegaard splitting  
 $$q^{-1}(B^+_{(p,k)}) \cup_{q^{-1}(S_{(p,k)})}  q^{-1}(B^-_{(p,k)}) $$
of  $M_L$ associated with the bridge decomposition $(L; S_{(p,k)}) $. 
We note that the Heegaard surface $q^{-1}(S_{(p,k)})$ of $M_L$ 
is obtained from the Heegaard surface $q^{-1}(S)$ 
by $k$ stabilizations. 
%The Heegaard splitting (\ref{equation_splitting}) of $M_L$
%is obtained from the Heegaard splitting  $q^{-1}(D) \cup_{q^{-1}(S)}  q^{-1}(B)$ 
%by $k$ stabilizations. 
To see this, take a disk $U$ embedded in 
the $3$-ball $B^+$ together with an arc $\gamma \subset \partial U$ 
(as in Section \ref{subsection_Bridge-decompositions}) 
so that $S_{(p,1)} = \partial (B^- \cup N(\gamma))$ is obtained from $S$ by a stabilization. 
Let $\widetilde{U} = q^{-1}(U)$, $\widetilde{\gamma}= q^{-1}(\gamma) $ be the preimages of $U$, $\gamma$, respectively. 
See Figure \ref{fig_stabilization}.
Then $\widetilde{\gamma}$ is a properly embedded arc in $  q^{-1} (B^+)$ 
which is parallel to $q^{-1}(S)$. 
Therefore, $\partial (q^{-1} (B^-) \cup N(\tilde{\gamma}))$ is obtained from $q^{-1}(S)$ 
by a stabilization. 
By iterating the same argument, we obtain the desired claim. 

\begin{center}
\begin{figure}[t]
\includegraphics[height=3cm]{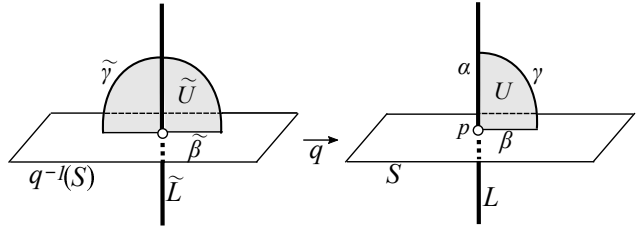}
\caption{$\widetilde{U} = q^{-1}(U)$ and $\widetilde{\gamma}= q^{-1}(\gamma) $.}
\label{fig_stabilization}
\end{figure}
\end{center}

\subsection{Examples}
\label{subsection_examples}
 
Below we provide several examples of the Goeritz groups of bridge decompositions 
that can be computed from the definitions. 
In Examples 
\ref{ex:Goeritz group of trivial knot} and 
\ref{ex:Goeritz group of Hopf knot} we describe the 
Goeritz groups of the $n$-bridge decomposition $(O; n)$ of the trivial knot $O$ 
and the $n$-bridge decomposition $(H; S_{p, n-2})$ of the Hopf link $H$ defined in 
Section \ref{subsection_Bridge-decompositions} in terms of wicket groups, 
which play a key role in Section \ref{section_application}. 
In Examples \ref{ex:Goeritz group of 2-bridge decomposition} and 
\ref{ex:Goeritz group of some 3-bridge decomposition}
we give explicit presentations of the Goeritz groups of 
2- and 3-bridge decompositions of all 2-bridge links. 
A sufficient condition for the Goeritz group to be {an infinite} group 
in terms of the distance will also be provided.

\begin{ex}
\label{ex:Goeritz group of n-component trivial link}
Consider  the $n$-bridge decomposition 
$(O_n ; n)= \Bar{\mathcal{A}} \cup_S \mathcal{A}$ of the $n$-component trivial link $O_n$, 
where $\Bar{\mathcal{A}}= \Bar{\mathcal{A}_n}$ and $\mathcal{A}= \mathcal{A}_n$. 
By Theorems~\ref{thm:Kin-Hirose Theorem 2.6} and \ref{thm_bridge_char} 
we have 
$$\mathcal{G}(O_n; n) \cong 
\Gamma \bigl( \SW_{2n}(\mathcal{A}, \mathcal{A})\bigr) = \Gamma(\SW_{2n}) 
= \MCG  (B^-, \mathcal{A}) .$$ 
In particular, the group $\mathcal{G}(O_n; n)$ is an infinite group except 
$\mathcal{G} (O_1; 1) \cong \Z / 2 \Z$. 
A finite generating set of $\mathcal{G} (O_n; n)$ is given by work \cite{Hilden75} of Hilden 
on $\MCG  (B^-, \mathcal{A})$. 
The asymptotic behavior of the minimal pseudo-Anosov dilatations in these groups was 
studied in \cite{HiroseKin17}. 
\end{ex}

\begin{ex}
\label{ex:Goeritz group of trivial knot}
Recall that the $n$-bridge decomposition 
$\Bar{\mathcal{A}} \cup_{S_{( p, n-1)}} \mathcal{B} $ 
defined in Section \ref{subsection_Bridge-decompositions} is 
the bridge decomposition $(O; n)$ of the trivial knot $O$. 
By Theorem~\ref{thm_bridge_char}, we have 
$\mathcal{G}(O; n)= \Gamma(\SW_{2n} (\mathcal{A}, \mathcal{B}))$. 
The $3$-sphere $S^3$ is the $2$-fold covering of $S^3$ branched over $O$. 
Let $(S^3; \Sigma)$ be the genus-$(n - 1)$ Heegaard splitting of $S^3$ 
associated with $(O; n)$. 
Then by Theorems \ref{thm:HG(M;V)/i} and 
\ref{thm:hyperelliptic Goeritz group as an intersection} 
we have 
$$\mathcal{HG}_T(S^3; \Sigma ) =  \Pi^{-1}(\Gamma(\SW_{2n} (\mathcal{A}, \mathcal{B}))).$$
%\label{cor_S3} 
\end{ex}

\begin{ex}
\label{ex:Goeritz group of Hopf knot}
Recall that the $n$-bridge decomposition 
$\Bar{\mathcal{A}} \cup_{S_{(p, n-2)}} \mathcal{C}$  
defined in Section \ref{subsection_Bridge-decompositions} 
is the bridge decomposition $(H; S_{(p, n-2 )})$ 
of the Hopf link $H$. 
By Theorem~\ref{thm_bridge_char}, we have 
$\mathcal{G}(H; S_{(p,  n - 2 )})= \Gamma(\SW_{2n}(\mathcal{A}, \mathcal{C}))$. 
The real projective space $\mathbb{RP}^3$ is 
the $2$-fold covering of $S^3$ branched over $H$.  
Let $(\mathbb{RP}^{3}; \Sigma )$ be the genus-$(n - 1)$ Heegaard splitting of 
$\mathbb{RP}^{3}$ associated with  
$(H; S_{(p, n - 2 )})$. 
Then by Theorems \ref{thm:HG(M;V)/i} and 
\ref{thm:hyperelliptic Goeritz group as an intersection} 
we have 
$$\mathcal{HG}_T(\mathbb{RP}^{3}; \Sigma) = 
\Pi^{-1}( \Gamma(\SW_{2n}(\mathcal{A}, \mathcal{C}))).$$

\end{ex}

\begin{ex}
\label{ex:Goeritz group of 2-bridge decomposition}
Let $L = S(p, r)$ be a $2$-bridge link (or the trivial knot) 
given by the Schubert normal form (Schubert \cite{Schubert56}, 
see also Hatcher-Thurston \cite{HatcherThurston85}{)}. 
By Otal \cite{Otal82}, there exists a unique 2-bridge decomposition 
$(L; S) = (B^+ \cap L) \cup_S (B^- \cap L)$ of $L$. 
If $(p, r ) = (0,1)$, then 
$L$ is the $2$-component trivial link $O_2$ and the Goeritz group 
$\mathcal{G} (L; S) = \mathcal{G} (O_2; 2)$ has 
already described in Example \ref{ex:Goeritz group of n-component trivial link}. 
Suppose that $(p, r ) \neq (0,1)$.  
 Since $(B^+ \cap L)$ (respectively, $(B^- \cap L)$) is a trivial $2$-tangle, 
there exists a unique essential separating disk $D^+$ 
(respectively, $D^-$) in $B^+ - (B^+ \cap L)$ 
(respectively, $B^- - (B^- \cap L)$). 
This implies that any element of $\mathcal{G} (L; S)$ preserves 
both $D^+$ and $D^-$.  
Since $(p,r) \neq (0,1)$, $S - (\partial D^+ \cup \partial D^-)$ consists only of disks, 
and each of them contains at most one point of $S \cap L$. 
Therefore, 
$\mathcal{G} (L; S)$ acts on the pair $(\partial D^+, \partial D^+ \cap \partial D^-)$ faithfully. 
This implies that $\mathcal{G} (L; S)$ is a subgroup of the dihedral group $D_k$, where 
$k = \# (\partial D^+ \cap \partial D^-) $. 
In fact, it is now easily checked that for any $(p,r) \neq (0,1)$, 
the Goeritz group  
$\mathcal{G} (L; S)$ is isomorphic to $ D_2 \cong \Z / 2 \Z \times \Z / 2 \Z$, 
where the generators are given in Figure \ref{fig_2-bridge}. 
In the figure, we think $L$ and $S$ as being embedded in $\R^3$, and 
the 3-ball bounded by $S$ is $B^-$. 
The element shown on the left-hand side in the figure is 
$\Gamma (\sigma_1^{-1} \sigma_3) $,  and 
the one shown on the right-hand side is $\Gamma (\Delta)$, 
where $\Gamma : \SB_4 \to \MCG (\Sigma_{0, 4}) $. 

\begin{center}
\begin{figure}[t]
\includegraphics[width=11cm]{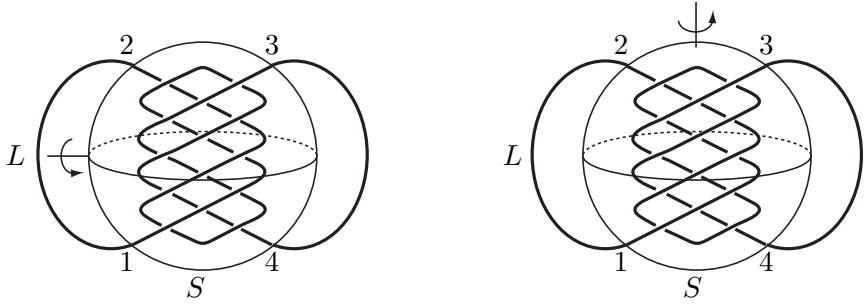}
\begin{picture}(400,0)(0,0)
\put(55,13){$1$}
\put(55,95){$2$}
\put(110,95){$3$}
\put(110,13){$4$}
\put(12,53){$L$}
\put(80,3){$S$}
\put(242,13){$1$}
\put(242,95){$2$}
\put(297,95){$3$}
\put(297,13){$4$}
\put(200,53){$L$}
\put(268,3){$S$}
\end{picture}
\caption{The two generators of $\mathcal{G} (L; S) \cong \Z / 2 \Z \times \Z / 2 \Z$ in the case of 
$L = S(5,2)$. 
They are given as half rotations along the illustrated axes.} 
\label{fig_2-bridge}
\end{figure}
\end{center}

\end{ex}

\begin{ex}
\label{ex:Goeritz group of some 3-bridge decomposition}
Let $L = S(p,  r)$ be again a $2$-bridge link (or the trivial knot) given by the Schubert normal form. 
Let $(L; S) = (B^+ \cap L) \cup_S (B^- \cap L)$ be 
a 3-bridge decomposition of $L$. 
Let $q: L( p , r ) \rightarrow S^3$ be 
the $2$-fold covering branched over $L$, 
where $L(p,r)$ is a lens space. 
Set $\Sigma := q^{-1} (S)$. 
Let $T$ be the non-trivial deck transformation of $q$. 
Then by Theorem~\ref{thm:HG(M;V)/i}, 
we have 
$\mathcal{G} (L; S) \cong \mathcal{HG}_{T} (L(p, r ); \Sigma ) / 
\langle [T] \rangle$.  
Since the genus of $\Sigma$ is two, 
it follows from Theorem~\ref{thm:hyperelliptic Goeritz group as an intersection} that 
the hyperelliptic Goeritz group 
$\mathcal{HG}_{T} (L(p,  r ); \Sigma)$ is canonically isomorphic to 
the Goeritz group $\mathcal{G} (L(p,  r ); \Sigma )$ itself, 
whose finite presentation is given 
by a series of work 
\cite{Akbas08, Cho08, Cho13, ChoKoda14, ChoKoda16, ChoKoda19}. 
Therefore, we can obtain a finite presentation of the Goeritz group 
$\mathcal{G} (L; S) \cong \mathcal{G} (L(p,  r ); \Sigma ) / \langle [T] \rangle$ 
for each $(p,r)$. 
\end{ex}

\begin{question}
Is the Goeritz group of the bridge decomposition of a link in $S^3$ always finitely generated? 
In particular, is $\mathcal{G} (O; n)$ finitely generated for any $n$?
\end{question}

\begin{ex}
\label{ex:distance and infinite order Goeritz groups}
Let $(L; S) = (B^+ , B^+ \cap L) \cup_S (B^+ , B^+ \cap L) $ be 
an $n$-bridge decomposition of a link $L \subset S^3$ 
with $n \geq 3$ and $d (L; S) \leq 1$.  
Then we can show that the Goeritz group $\mathcal{G}(L; S)$ is an infinite group as follows. 
Here we regard that $\mathcal{G}(L; S)$ is a subgroup of 
$\MCG  (S, S \cap L)$ consisting of elements that extend to 
both $\MCG  (B^+ , B^+ \cap L)$ and  $\MCG  (B^- , B^- \cap L)$. 
For convenience, we will not distinguish 
curves and homeomorphisms from their isotopy
classes. 
Similar arguments for Heegaard splittings can be found in 
Johnson-Rubinstein 
\cite[Corollary 6.2]{JohnsonRubinstein13} and 
Namazi \cite[Proposition 1]{Namazi07}. 

Suppose first that $d (L; S) = 0$. 
Then there exists an essential simple closed curve 
$\alpha \in \mathcal{D}^+ \cap \mathcal{D}^-$, that is, 
$\alpha$ is a simple closed curve on 
$S_L = \Cl (S - N (S \cap L; S))$ 
bounding a disk $D^+ \subset B^+ - (B^+ \cap L) $ and a disk 
$D^- \subset B^- - ( B^- \cap L) $.  
Therefore, the Dehn twist $\tau_\alpha$ about $\alpha$ is an element of 
$\mathcal{G} (L; S)$. 
Indeed, $\tau_\alpha$ extends to an element of $\mathcal{G} (L; S)$ as a 
rotation along the sphere $D^+ \cup D^-$. 
Since $\alpha$ is essential, 
the order of $\tau_\alpha$ in $\MCG  (S, S \cap L)$ is infinite. 
Thus $\mathcal{G}(L; S)$ is an infinite group. 

\begin{center}
\begin{figure}[t]
\includegraphics[width=8cm]{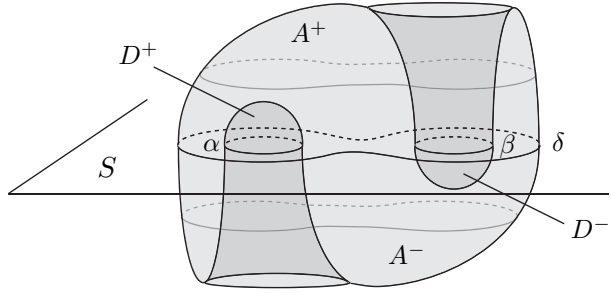}
\begin{picture}(400,0)(0,0)
\put(100,55){$S$}
\put(279,32){\begin{small}$D^-$\end{small}}
\put(108,98){\begin{small}$D^+$\end{small}}
\put(210,22){\begin{small}$A^-$\end{small}}
\put(173,106){\begin{small}$A^+$\end{small}}
\put(140,65){\begin{small}$\alpha$\end{small}}
\put(252,64){\begin{small}$\beta$\end{small}}
\put(272,65){\begin{small}$\delta$\end{small}}
\end{picture}
\caption{The 2-sphere $X = D^+ \cup A^- \cup A^+ \cup D^-$.} 
\label{fig_sphere_X}
\end{figure}
\end{center}

Suppose that  $d (L; S) = 1$. 
Then there exist disjoint essential simple closed curves 
$\alpha \in \mathcal{D}^+$ and $\beta \in \mathcal{D}^-$. 
(Note that here we use the assumption $n \geq 3$. 
Indeed, in the case of $n = 2$ the definition of the curve graph 
$\mathcal{C} ( S_L  ) $ is different from the usual case.) 
The simple closed curve $\alpha$ bounds a disk $D^+ \subset B^+ - (B^+ \cap L) $,  
and $\beta$ bounds a disk $D^- \subset B^- - (B^- \cap L) $. 
Take a simple arc $\gamma \subset S_L $ connecting $\alpha$ 
and $\beta$. 
Let $\delta$ be the component of the boundary of 
$N (\alpha \cup \gamma \cup \beta; S)$ that is not isotopic to 
$\alpha$ or $\beta$. 
Then $\alpha$ and $\delta$ cobounds an annulus $A^- \subset B^- - (B^- \cap L)$,  
while $\beta$ and $\delta$ cobounds an annulus $A^+$ in $D^+ \subset B^+ - (B^+ \cap L) $. 
In this way, we obtain a 2-sphere $X := D^+ \cup A^- \cup A^+ \cup D^-$ in 
$S^3 - L $ with $X \cap S = \alpha \cup \beta \cup \delta$, see Figure \ref{fig_sphere_X}.  
Consider the map $\phi := \tau_{\alpha} \circ \tau_{\delta}^{-1} \circ \tau_{\beta} : (S, S \cap L) \to (S, S \cap L)$. 
By the above construction, $\phi$ extends to a homeomorphism of 
$B^+$ as the composition of 
the twist about $D^+$ and the twist about $A^+$, 
while $\phi$ extends to a homeomorphism of 
$B^-$ as the composition of 
the twist about $D^-$ and the twist about $A^-$. 
Thus, $\phi$ extends to an element $\hat{\phi}$ of 
$\mathcal{G} (L; S)$. 
Since $\alpha$, $\beta$, $\delta$ are pairwise 
disjoint, pairwise non-parallel, essential simple closed curves on 
$S_L $, the order of $\phi$ in $\MCG  (S, S \cap L)$ is infinite. 
Therefore, $\mathcal{G}(L; S)$ is an infinite group. 
\end{ex}

\section{The Goeritz groups of high distance bridge decompositions}
\label{section_distance}

As we have seen in Example 
\ref{ex:distance and infinite order Goeritz groups}, 
the Goeritz group of a bridge decomposition $(L; S)$ is 
an infinite group if the distance of $(L; S)$ is at most one. 
In contrast to Example 
\ref{ex:distance and infinite order Goeritz groups}, 
we are going to show that the Goeritz group of $(L; S)$ 
is a finite group if the distance of 
$(L; S)$ is sufficiently large. 
The aim of this section is to prove 
Theorem~\ref{introthm:finite Goeritz group}, which is restated below. 

\begin{thm}
\label{thm:finite Goeritz group} 
There exists a uniform constant $N$ such that 
if the distance of an $n$-bridge decomposition 
$(L; S)$ of a link $L$ in $S^3$ with $n \geq 3$ is greater than $N$, 
then the Goeritz group $\mathcal{G}(L;S)$ is a finite group.   
\end{thm}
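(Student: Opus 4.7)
The plan is to show that once $d(L;S)$ exceeds the explicit constant
$N := 2K+2\delta$, where $K \le 1796$ is the quasiconvexity constant from
Theorem~\ref{thm:convexity} and $\delta \le 102$ is the hyperbolicity constant
from Theorem~\ref{thm:hyperbolicity} (so $N \le 3796$), the group
$\mathcal{G}(L;S)$ contains no element of infinite order. Since
$\MCG(\Sigma_{0,2n})$ admits a torsion-free subgroup of finite index
(Harer--Ivanov), every torsion subgroup of it is finite, and the theorem
follows at once. Regarding each $g \in \mathcal{G}(L;S)$ as an element of
$\MCG(\Sigma_{0,2n})$, one splits into the Nielsen--Thurston cases.

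\emph{Pseudo-Anosov case.} Assume $g$ is pseudo-Anosov. Then $g$ acts on the
$\delta$-hyperbolic space $\mathcal{C}(S_L)$ as a loxodromic isometry with a
bi-infinite quasi-axis $\ell$ of positive translation length. Pick any
$\alpha \in \mathcal{D}^+$. Since $\mathcal{D}^+$ is $g$-invariant, the orbit
$\{g^n\alpha\}_{n\in\mathbb{Z}} \subset \mathcal{D}^+$ is a quasi-geodesic
fellow-travelling $\ell$ within uniformly bounded distance. By
Theorem~\ref{thm:convexity}, for each $n$ the geodesic from $g^{-n}\alpha$ to
$g^n\alpha$ lies in $N_K(\mathcal{D}^+)$; letting $n\to\infty$ and using
$\delta$-hyperbolicity shows that $\ell$ itself lies in
$N_{K+\delta}(\mathcal{D}^+)$. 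The same argument for $\mathcal{D}^-$ gives
$\ell \subset N_{K+\delta}(\mathcal{D}^-)$, whence the triangle inequality
forces $d(L;S) \le 2K+2\delta$, contradicting the hypothesis.

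\emph{Reducible case.} After replacing $g$ by a suitable power we may assume
that $g$ fixes every component of a nonempty multicurve $\sigma$ and preserves
each complementary subsurface. Suppose first that $g$ restricts to a
pseudo-Anosov on some non-sporadic complementary subsurface $Y$. The subsurface
projections $\pi_Y(\mathcal{D}^\pm) \subset \mathcal{C}(Y)$ are $g$-invariant;
as a pseudo-Anosov on $Y$ admits no bounded invariant set in $\mathcal{C}(Y)$,
each projection is either empty or unbounded. Choosing
$\alpha \in \mathcal{D}^+$ and $\beta \in \mathcal{D}^-$ so that
$d_{\mathcal{C}(Y)}(\pi_Y(\alpha),\pi_Y(\beta))$ exceeds the constant of
Theorem~\ref{thm:geodesic image}, that theorem forces every geodesic from
$\alpha$ to $\beta$ in $\mathcal{C}(S_L)$ to contain a vertex disjoint from
$\partial Y$, which combined with the quasiconvexity of $\mathcal{D}^\pm$ and
$\delta$-hyperbolicity yields a bound on $d(L;S)$ in terms of $K$, $\delta$
and the geodesic-image constant. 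If instead $g$ acts periodically on every
complementary subsurface, then a further power of $g$ is a nontrivial product
of Dehn twists about components of $\sigma$; by an extension argument adapted
to the pair $(B^\pm, B^\pm\cap L)$ in the spirit of McCullough's Dehn-twist
detection, such a product can extend to both trivial tangles only if each
twisting curve bounds a disk in $B^\pm - (B^\pm \cap L)$, i.e.\ lies in
$\mathcal{D}^+\cap\mathcal{D}^-$, forcing $d(L;S)=0$ and contradicting the
hypothesis.

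The main obstacle I anticipate lies in the reducible case: the pseudo-Anosov
argument gives the clean bound $2K+2\delta \le 3796$ that drives the value of
$N$, but I expect some care in the reducible analysis to ensure that neither
the subsurface-projection subcase nor the Dehn-twist subcase inflates $N$.
In particular, the tangle-version of the Dehn-twist detection
(identifying when $\tau_\alpha^m$ lies in $\MCG(B^\pm, B^\pm\cap L)$) requires
a careful adaptation of the handlebody-group arguments from the Heegaard
setting to the pair $(B, B \cap L)$, using the loop theorem for
$B - (B\cap L)$. Once infinite-order elements are excluded, virtual
torsion-freeness of $\MCG(\Sigma_{0,2n})$ completes the proof.
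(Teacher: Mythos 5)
Your overall strategy is the paper's: show that a pseudo-Anosov or an infinite-order reducible element would force $d(L;S)$ below an explicit bound, then invoke virtual torsion-freeness of $\MCG(\Sigma_{0,2n})$ to conclude that a torsion subgroup is finite. Your pseudo-Anosov case is essentially the paper's Lemma~\ref{lem:pA case} (the quasi-axis phrasing is a cosmetic variant of comparing the geodesics from $\alpha$ to $\phi^k(\alpha)$ and from $\beta$ to $\phi^k(\beta)$), and it correctly yields $2K+2\delta$. The reducible case, however, has two genuine gaps.

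First, the multitwist subcase. You assert that a product of Dehn twists about disjoint curves extends over both trivial tangles only if every twisting curve lies in $\mathcal{D}^+\cap\mathcal{D}^-$, forcing $d(L;S)=0$. This is false, and the paper's own Example~\ref{ex:distance and infinite order Goeritz groups} is a counterexample: for a bridge decomposition of distance exactly $1$, the composition $\tau_\alpha\circ\tau_\delta^{-1}\circ\tau_\beta$ extends over both tangles (as a twist about a disk composed with a twist about an annulus on each side), yet $\delta$ bounds a disk in neither $B^+-L$ nor $B^--L$, and $\alpha$, $\beta$ each bound on only one side. Any tangle analogue of Dehn-twist detection must allow twists supported on annuli and $I$-bundles, and then one only gets that the twisting curves are uniformly close to the disk sets, not that they lie in them. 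Second, in the pseudo-Anosov-component subcase you apply Theorem~\ref{thm:geodesic image} to a geodesic from $\alpha\in\mathcal{D}^+$ to $\beta\in\mathcal{D}^-$; but Theorem~\ref{thm:convexity} controls only geodesics whose \emph{both} endpoints lie in the same disk set, so locating a vertex of $[\alpha,\beta]$ near $\partial Y$ gives no bound on $d(L;S)$. The paper's Lemma~\ref{lem:reducible case} avoids both problems at once: by Namazi, for a non-periodic reducible $\phi$ there is a subsurface $Y$ --- either a pseudo-Anosov component or an annular neighbourhood of a reducing curve, so the multitwist case is absorbed here --- with $d_{\mathcal{C}(Y)}(\pi_Y(\alpha),\pi_Y(\phi^k(\alpha)))\to\infty$ for \emph{any} $\alpha$; applying Theorem~\ref{thm:geodesic image} to a geodesic from $\alpha\in\mathcal{D}^+$ to $\phi^k(\alpha)\in\mathcal{D}^+$ (which does lie in $N_K(\mathcal{D}^+)$) places a reducing curve within $K+2$ of $\mathcal{D}^+$, and symmetrically for $\mathcal{D}^-$, giving $2K+4\le 2K+2\delta$. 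So your value of $N$ survives, but your reducible analysis as written does not close. (A further small point: a pseudo-Anosov component can be a $4$-holed sphere, which your restriction to non-sporadic subsurfaces excludes; Theorem~\ref{thm:geodesic image} only requires $Y$ not to be a $3$-holed sphere.)
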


We note that an analogous fact was proved for Heegaard splittings 
by Namazi \cite{Namazi07}, that is, in that paper he showed that if the distance of a Heegaard splitting 
is sufficiently large, its Goeritz group is a finite group. 
If the distance of the Heegaard splitting associated with a bridge decomposition 
of high distance is also high, then Namazi's result together with 
Theorems \ref{thm:HG(M;V)/i} and \ref{thm:hyperelliptic Goeritz group as an intersection} 
immediately implies Theorem \ref{thm:finite Goeritz group}.  
However, we do not know at present whether 
there exists a lower bound of the distance of the associated Heegaard 
splitting in terms of that of a bridge decomposition.  
Instead, we will give a more direct proof, which shares the same spirit as 
Namazi's proof.  
We also note that Ohshika-Sakuma \cite{OhshikaSakuma16} studied 
another kind of groups related to Heegaard splittings and bridge decompositions. 
The main result of \cite{Namazi07} and the above theorem are comparable 
with Theorem 2 of \cite{OhshikaSakuma16}.

\begin{lem}
\label{lem:reducible case}
If $\mathcal{G}(L;S)$ contains a non-periodic reducible element, 
then the distance $d(L;S)$ is at most $2K+4$. 
Here $K$ is the uniform constant in Theorem~$\ref{thm:convexity}$. 
\end{lem}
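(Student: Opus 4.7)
The plan is to produce from $\phi$ an essential simple closed curve $\gamma$ on $S_L$ fixed by a power of $\phi$, and to show that $\gamma$ lies within distance at most $K+2$ of each of $\mathcal{D}^+$ and $\mathcal{D}^-$ in $\mathcal{C}(S_L)$, so that the triangle inequality gives $d(L;S) \leq 2K + 4$.

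First, I would consider the canonical Nielsen-Thurston reduction system of $\phi$ on $S_L$. After replacing $\phi$ by a suitable nonzero power (which remains in $\mathcal{G}(L;S)$ as a subgroup, remains non-periodic because $\phi$ has infinite order, and retains the same reduction system), I may assume that $\phi$ fixes the isotopy class of each component of the reduction system; let $\gamma$ be one such component, viewed as a vertex of $\mathcal{C}(S_L)$. Since $\phi$ is non-periodic, either $\phi$ restricts to a pseudo-Anosov map on some component $Y'$ of $S_L$ cut along the reduction system, or some nonzero power of $\phi$ is a non-trivial Dehn twist along $\gamma$. In the first case set $Y := Y'$; in the second set $Y$ to be a regular annular neighborhood of $\gamma$. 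In either case $Y$ is an essential subsurface with $\gamma \subset \partial Y$, is not a three-holed sphere, is $\phi$-invariant, and $\phi$ acts on $\mathcal{C}(Y)$ with unbounded orbits (loxodromically in the pseudo-Anosov case; as a nontrivial shift of essential arcs of the annular cover in the twist case).

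Next, to bound $d_{\mathcal{C}(S_L)}(\gamma, \mathcal{D}^+)$, I would split into two cases. If some vertex $\alpha_0 \in \mathcal{D}^+$ is disjoint from $Y$, then $\alpha_0$ is disjoint from $\gamma$, and $d(\gamma, \mathcal{D}^+) \leq 1 \leq K+2$. Otherwise every $\alpha \in \mathcal{D}^+$ satisfies $\pi_Y(\alpha) \neq \emptyset$. Fix such an $\alpha$; since $\phi$ preserves $\mathcal{D}^+$ setwise, $\phi^n(\alpha) \in \mathcal{D}^+$ for every $n \in \mathbb{Z}$. Using the unbounded orbit of $\phi$ on $\mathcal{C}(Y)$, choose $n$ with $d_{\mathcal{C}(Y)}(\pi_Y(\alpha), \pi_Y(\phi^n(\alpha))) > C$, where $C$ is the constant of Theorem~\ref{thm:geodesic image}. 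By the bounded geodesic image theorem, any geodesic $g$ in $\mathcal{C}(S_L)$ from $\alpha$ to $\phi^n(\alpha)$ must contain a vertex $v$ with $\pi_Y(v) = \emptyset$; such $v$ is disjoint from $Y$, so $d_{\mathcal{C}(S_L)}(v, \gamma) \leq 1$. By the $K$-quasiconvexity of $\mathcal{D}^+$ from Theorem~\ref{thm:convexity}, the geodesic $g$ lies in the $K$-neighborhood of $\mathcal{D}^+$, so $d(v, \mathcal{D}^+) \leq K$, and the triangle inequality gives $d(\gamma, \mathcal{D}^+) \leq K+2$. The symmetric argument applied to $\mathcal{D}^-$ yields $d(\gamma, \mathcal{D}^-) \leq K+2$, and the claim follows.

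The main obstacle is the first step: the Nielsen-Thurston case analysis must be carried out carefully, passing to a power of $\phi$ on which each reduction curve is fixed and producing a subsurface $Y$ (in particular not a three-holed sphere) on which $\phi$ has unbounded orbit and to which the bounded geodesic image theorem applies. The pseudo-Anosov case is standard, while in the multi-twist case one must appeal to the annular version of subsurface projection reviewed in Section~\ref{subsec:Curve graphs}; everything else is a coordinated application of quasiconvexity and Theorem~\ref{thm:geodesic image}.
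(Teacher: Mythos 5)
Your proposal is correct and follows essentially the same route as the paper: pick a curve $\gamma$ in the canonical reduction system, produce a subsurface $Y$ (pseudo-Anosov piece or annulus) on which the $\phi$-orbit of a projection is unbounded, use the bounded geodesic image theorem to find a vertex of the geodesic from $\alpha$ to $\phi^k(\alpha)$ missing $Y$, and combine with the $K$-quasiconvexity of $\mathcal{D}^{\pm}$. The only slip is the claim that $\gamma \subset \partial Y$ in the pseudo-Anosov case, which need not hold for an arbitrarily chosen reduction curve; but since all reduction curves are disjoint one still gets $d_{\mathcal{C}(S_L)}(v,\gamma) \le 2$ (as in the paper), which yields the same bound $2K+4$.
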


\begin{proof}
Assume that $\mathcal{G}(L;S)$ contains non-periodic reducible element 
$\phi$. 
Let $\gamma$ be a curve in the canonical reducing system for $\phi$. 
\begin{claim}
\label{claim:reducingcurve}
Let $\alpha$ be an element of $\mathcal{C}^{0}(S_{L})$, 
where we recall that $S_L = \Cl (S - N (S \cap L) ; S)$. 
If $k>0$ is sufficiently large, then  
the distance between $\gamma$ and any geodesic segment that connects $\alpha$ and $\phi^{k}(\alpha)$ 
is at most $2$. 
\end{claim}

\begin{proof}[Proof of Claim $\ref{claim:reducingcurve}$]
Let $\{\gamma_{1},\ldots,\gamma_{s}\}$ be the canonical reducing system for $\phi$. 
Note that $d_{\mathcal{C}(\Sigma)}(\gamma,\gamma_{i}) \le 1$ for $1 \le i \le s$. 
By Namazi \cite{Namazi07}, 
there exists an essential subsurface $Y$ of $S_{L}$, 
where $Y$ is a pseudo-Anosov component of $\phi$ or an annular neighborhood of some $\gamma_{i}$, 
such that 
$d_{\mathcal{C}(Y)}(\pi_{Y}(\alpha),\pi_{Y}(\phi^{k}(\alpha))) \rightarrow \infty$ as $k \rightarrow \infty$. 
Let $c$ be a geodesic segment connecting $\alpha$ and $\phi^{k}(\alpha)$.  
By Theorem~\ref{thm:geodesic image},
if every vertex of $c$ intersects $Y$,  
then we have $\mathrm{diam}_{\mathcal{C}(Y)}( \pi_Y(c) ) \le C$.  
Here $C>0$ is the constant in Theorem~\ref{thm:geodesic image}.
Since $d_{\mathcal{C}(Y)}(\pi_{Y}(\alpha),\pi_{Y}(\phi^{k}(\alpha))) \rightarrow \infty$ as $k \rightarrow \infty$,  
there exists a vertex of $c$ that does not intersect $Y$ for a sufficiently large $k$. 
Thus the distance between $\partial Y$ and $c$ is at most $1$ in the curve 
graph $\mathcal{C}( S_L )$. 
Therefore the distance between $\gamma$ and $c$ is at most $2$.
\end{proof}

Let $\alpha$ be an arbitrary element of $\mathcal{D}^{+}$. 
Let $k>0$ be a sufficiently large integer. 
Let $c$ be a geodesic segment that connects $\alpha$ and $\phi^{k}(\alpha)$. 
By Theorem~\ref{thm:convexity}, 
$c$ lies within the $K$-neighborhood of $\mathcal{D}^{+}$. 
Combining this fact and Claim \ref{claim:reducingcurve}, 
we conclude that the distance between $\gamma$ and $\mathcal{D}^{+}$ is at most $K+2$. 
 
Since the same argument can be applied to $\mathcal{D}^{-}$,  
the distance between $\gamma$ and $\mathcal{D}^{-}$ is at most $K+2$.  
Thus we have  
$d(L;S) \le d_{\mathcal{C}(S_{L})}(\mathcal{D}^{+},\gamma)+d_{\mathcal{C}(S_{L})}(\gamma, \mathcal{D}^{-}) \le 2K+4$. 
\end{proof}

\begin{lem}
\label{lem:pA case}
If $\mathcal{G}(L;S)$ contains a pseudo-Anosov element, 
then the distance $d(L;S)$ is at most $2K+2\delta$. 
Here $K$ and $\delta$ are the uniform constants in 
Theorem~$\ref{thm:convexity}$ and Theorem~$\ref{thm:hyperbolicity}$, respectively. 
\end{lem}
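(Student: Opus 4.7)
The plan is to leverage the pseudo-Anosov hypothesis, which forces $\phi$ to act as a loxodromic isometry on the $\delta$-hyperbolic curve graph $\mathcal{C}(S_L)$ by Masur-Minsky \cite{MasurMinsky99}, together with the quasiconvexity of $\mathcal{D}^{\pm}$ from Theorem~\ref{thm:convexity}, to produce a point of $\mathcal{C}(S_L)$ close to both $\mathcal{D}^+$ and $\mathcal{D}^-$. First I would fix $\alpha \in \mathcal{D}^+$ and $\beta \in \mathcal{D}^-$ and observe that, since $\phi \in \mathcal{G}(L;S)$ preserves each of the disk sets setwise, the orbits $\{\phi^n(\alpha)\}_{n\in\mathbb{Z}}$ and $\{\phi^n(\beta)\}_{n\in\mathbb{Z}}$ lie entirely in $\mathcal{D}^+$ and in $\mathcal{D}^-$ respectively. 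For each positive integer $n$, I would then choose geodesic segments $\gamma_n^+$ from $\phi^{-n}(\alpha)$ to $\phi^n(\alpha)$ and $\gamma_n^-$ from $\phi^{-n}(\beta)$ to $\phi^n(\beta)$, which by Theorem~\ref{thm:convexity} lie in the $K$-neighborhood of $\mathcal{D}^+$ and of $\mathcal{D}^-$ respectively.

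The key geometric step is to consider the geodesic quadrilateral having $\gamma_n^+$ and $\gamma_n^-$ as two opposite sides and geodesics $\eta_n^{\pm}$ joining $\phi^{\pm n}(\alpha)$ to $\phi^{\pm n}(\beta)$ as the remaining two sides. A crucial observation is that, since $\phi$ acts as an isometry on $\mathcal{C}(S_L)$, each segment $\eta_n^{\pm}$ has length exactly $d_{\mathcal{C}(S_L)}(\alpha,\beta)$, independently of $n$. By $\delta$-hyperbolicity, this quadrilateral is $2\delta$-slim via a standard diagonal-subdivision argument from the $\delta$-slimness of triangles. The positivity of the asymptotic translation length of $\phi$, combined with the uniform boundedness of the lengths of $\eta_n^{\pm}$, guarantees that the midpoint $m_n$ of $\gamma_n^+$ lies at distance greater than $2\delta$ from both $\eta_n^+$ and $\eta_n^-$ once $n$ is sufficiently large. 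The slimness of the quadrilateral then forces $m_n$ to lie within $2\delta$ of $\gamma_n^-$. Since $m_n$ lies in the $K$-neighborhood of $\mathcal{D}^+$ and the nearby point of $\gamma_n^-$ lies in the $K$-neighborhood of $\mathcal{D}^-$, the triangle inequality yields $d(L;S) \leq 2K + 2\delta$.

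The main point requiring care is the divergence of $d(m_n, \eta_n^{\pm})$ with $n$, which depends on the loxodromicity of the $\phi$-action together with the uniform bound $d_{\mathcal{C}(S_L)}(\alpha,\beta)$ on the lengths of $\eta_n^{\pm}$. This is precisely where the pseudo-Anosov hypothesis (as opposed to the non-periodic reducible case treated in Lemma~\ref{lem:reducible case}) enters essentially: pseudo-Anosov elements act with positive asymptotic translation length on the curve graph by the theorem of Masur-Minsky, so no orbit can be trapped in any bounded region. Granted this, the remainder of the argument is a clean application of the thin-quadrilateral inequality in a $\delta$-hyperbolic space, and no subsurface projection machinery is needed here.
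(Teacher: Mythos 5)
Your proposal is correct and follows essentially the same route as the paper: the paper also takes geodesics $c_k$ from $\alpha$ to $\phi^k(\alpha)$ and $d_k$ from $\beta$ to $\phi^k(\beta)$, invokes Theorem~\ref{thm:convexity} to keep them in the $K$-neighborhoods of $\mathcal{D}^{\pm}$, and uses $\delta$-hyperbolicity to bring them within $2\delta$ of each other for large $k$, yielding $d(L;S)\le 2K+2\delta$. Your write-up simply makes explicit the thin-quadrilateral and positive-translation-length details that the paper leaves implicit.
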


\begin{proof}
Let $\alpha$ (respectively, $\beta$) be an arbitrary element of 
$\mathcal{D}^{+}$ (respectively, $\mathcal{D}^{-}$). 
Let $c_{k}$ be a geodesic segment connecting $\alpha$ and $\phi^{k}(\alpha)$ for each $k>0$. 
Let $d_{k}$ be a geodesic segment connecting $\beta$ and $\phi^{k}(\beta)$ for each $k>0$. 
By Theorem~\ref{thm:hyperbolicity}, 
the distance between $c_{k}$ and $d_{k}$ is at most $2\delta$ 
when $k$ is sufficiently large. 
By Theorem~\ref{thm:convexity}, 
$c_{k}$ lies within the $K$-neighborhood of $\mathcal{D}^{+}$. 
Similarly, 
$d_{k}$ lies within the $K$-neighborhood of $\mathcal{D}^{-}$. 
Thus we have 
$$d(L;S) \le 
d_{\mathcal{C}(S_{L})}(\mathcal{D}^{+},c_{k})+d_{\mathcal{C}(S_{L})}(c_{k},d_{k})+d_{\mathcal{C}(S_{L})}(d_{k}, \mathcal{D}^{-}) 
\le 2K+2\delta, $$ 
which is our assertion. 
\end{proof}

\begin{lem}
\label{lem:torsion}
Any torsion subgroup of $\MCG  (\Sigma_{0, 2n})$ is finite. 
\end{lem}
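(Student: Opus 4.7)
The plan is to exhibit a torsion-free subgroup of $\MCG(\Sigma_{0,2n})$ of finite index; once such a subgroup $H$ is in hand, any torsion subgroup $T \subset \MCG(\Sigma_{0,2n})$ must satisfy $T \cap H = \{1\}$, and so $T$ injects into the finite quotient $\MCG(\Sigma_{0,2n})/H$, forcing $T$ to be finite with $|T| \le [\MCG(\Sigma_{0,2n}):H]$.

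The natural candidate for $H$ is the \emph{pure} mapping class group $\mathrm{PMCG}(\Sigma_{0,2n})$, defined as the kernel of the natural surjection $\MCG(\Sigma_{0,2n}) \to S_{2n}$ given by the induced permutation of the $2n$ marked points. This subgroup has index $(2n)!$ in $\MCG(\Sigma_{0,2n})$. To show that it is torsion-free, I would apply the Birman exact sequence
$$ 1 \to \pi_1(\Sigma_{0,m-1}) \to \mathrm{PMCG}(\Sigma_{0,m}) \to \mathrm{PMCG}(\Sigma_{0,m-1}) \to 1, $$
valid for $m \ge 4$, in which the leftmost term is the fundamental group of the $(m-1)$-punctured sphere, a free group of rank $m-2$ and in particular torsion-free. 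Using $\mathrm{PMCG}(\Sigma_{0,3}) = \{1\}$ as the base case and inducting on $m$, one finds that $\mathrm{PMCG}(\Sigma_{0,m})$ is an iterated extension of torsion-free groups by torsion-free groups, and hence is torsion-free for every $m \ge 3$. The small cases $n \le 1$ are handled separately and are trivial because $\MCG(\Sigma_{0,2n})$ is already finite there.

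I do not anticipate a serious obstacle. The argument is essentially a corollary of the standard fact that mapping class groups of finite-type surfaces are virtually torsion-free, and the Birman exact sequence approach is entirely self-contained, sidestepping any appeal to congruence subgroups, representations on the homology of branched covers, or the structure theory of spherical braid groups. The only minor care needed is to check that the Birman sequence applies to the sphere case starting from $m=4$, where the base $\mathrm{PMCG}(\Sigma_{0,3})$ is genuinely trivial by the Alexander method.
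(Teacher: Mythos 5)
Your proof is correct, but it takes a genuinely different route from the paper's. Both arguments share the same endgame: produce a torsion-free finite-index subgroup $H \le \MCG(\Sigma_{0,2n})$, observe that a torsion subgroup $T$ meets $H$ trivially, and conclude $|T| \le [\MCG(\Sigma_{0,2n}):H]$. The difference is in how $H$ is produced. You take $H = \PMod(\Sigma_{0,2n})$, the kernel of the surjection onto $S_{2n}$, and prove torsion-freeness by induction via the Birman exact sequence $1 \to \pi_1(\Sigma_{0,m-1}) \to \PMod(\Sigma_{0,m}) \to \PMod(\Sigma_{0,m-1}) \to 1$ (valid for $m \ge 4$ since $\chi(\Sigma_{0,m-1})<0$ there), with base case $\PMod(\Sigma_{0,3})=1$; an extension of a torsion-free group by a torsion-free group is torsion-free, so the induction closes. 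The paper instead invokes Serre's theorem to get a torsion-free finite-index subgroup $G$ of $\MCG(\Sigma_{n-1})$ for the closed genus-$(n-1)$ surface, intersects it with the hyperelliptic mapping class group $\mathcal{H}(\Sigma_{n-1})$, and pushes the result forward along the Birman--Hilden quotient $\Pi : \mathcal{H}(\Sigma_{n-1}) \to \MCG(\Sigma_{0,2n})$, using that $\ker\Pi \cong \Z/2\Z$ meets the torsion-free subgroup trivially. Your argument is more elementary and self-contained: it stays entirely in the punctured sphere and needs neither Serre's theorem nor the Birman--Hilden machinery. The paper's version has the advantage of reusing the map $\Pi$ and the hyperelliptic framework it has already set up, which keeps the proof short in context. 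Your handling of the degenerate cases $n \le 1$ and of the index $(2n)!$ is fine; the only point worth stating explicitly is the (easy) lemma that torsion-free-by-torsion-free extensions are torsion-free.
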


\begin{proof}
By Serre \cite{Serre61}, $\mathrm{Mod}(\Sigma_{n-1})$ contains a torsion-free subgroup $G$ of finite index.  
Set $G^{\prime}:=\mathcal{H}(\Sigma_{n-1}) \cap G$. 
Then we have 
$$[\mathcal{H}(\Sigma_{n-1}):G^{\prime}]
=[\mathcal{H}(\Sigma_{n-1}) \cdot G:G]
\le [\mathrm{Mod}(\Sigma_{n-1}):G]
< \infty.$$ 
Thus, the index of $G_{0}:=\Pi(G^{\prime})$ in $\MCG  (\Sigma_{0, 2n})$ is finite, 
where we recall that $\Pi : \mathcal{H} (\Sigma_{n-1}) \to \MCG  (\Sigma_{0, 2n})$ 
is the natural map. 
Since $G'$ is torsion free and $\ker \Pi \cong \Z / 2 \Z$, $G_0$ is also torsion free. 

Suppose that $F$ is a torsion subgroup of $\MCG  (\Sigma_{0, 2n})$. 
Since $G_0$ is torsion free, $F \cap G_{0}$ is the trivial group. 
Since $[\MCG  (\Sigma_{0, 2n}) : G_0 ]$ is finite, 
we conclude that $F$ is a finite group. 
\end{proof}

\begin{proof}[Proof of Theorem~$\ref{thm:finite Goeritz group}$]
Set $N:= \max \{ 2K + 4,  2K+2\delta \} $, where we recall this is a uniform constant. 
Let $(L;S)$ be an $n$-bridge decomposition of a link in $S^3$ with $n \geq 3$. 
Suppose that $d(L;S) > N$. 
By Lemmas~\ref{lem:reducible case} and \ref{lem:pA case},
$\mathcal{G}(L;S)$ contains neither a reducible element nor a pseudo-Anosov element. 
Thus $\mathcal{G}(L;S)$ is a torsion subgroup of $\MCG  (\Sigma_{0, 2n})$. 
By Lemma~\ref{lem:torsion}, 
$\mathcal{G}(L;S)$ is a finite group. 
\end{proof}
 
As we have explained in Sections \ref{subsec:Curve graphs} and 
\ref{subsec:The distance of bridge decompositions}, 
the constant $\delta$ can be chosen to be at most $102$, and 
$K$ can be chosen to be at most $1796$. 
Therefore, the above proof shows that 
the constant $3796$ is enough for the constant $N$ in Theorem~$\ref{thm:finite Goeritz group}$.

\section{Pseudo-Anosov elements in the Goeritz groups of stabilized bridge decompositions}
\label{section_stabilized}

It follows immediately from 
Example $\ref{ex:distance and infinite order Goeritz groups}$ and 
Lemma $\ref{lem: distance stabilized}$ that 
the Goeritz group  of a stabilized bridge decomposition of a link in $S^3$ is an infinite group except the case of 
the $2$-bridge decomposition $(O; 2)$ of the trivial knot $O$. 
For each of those bridge decompositions, we can find an infinite order element of the Goeritz group 
looking at a local part of the decomposition as follows. 
Let $(L; S)$ be an $n$-bridge decomposition of $L$ with $n \geq 2$. 
Let $p$ be a point in $S \cap L$. 
Without loss of generality, we may assume that the point $p$ is labeled by $2n$. 
Consider the $(n+1)$-bridge decomposition 
$$(L; S_{(p, 1)}) = (B^+_{(p, 1)} \cap L) \cup_{S_{(p,1)}} (B^-_{(p,1)} \cap L).$$  
Set $S':= S_{(p,1)}$. 
Recall that the triples $(S^3, S, L)$ and $(S^3, S', L)$ are identical 
except within a small $3$-ball $B$ near the point $p$ shown in Figure $\ref{fig_stabilized_goeritz}$$(1)$. 
Set $\alpha := \partial B \cap S'$. 
Since $\alpha$ bounds a disk $D^+ \subset B^+$ $($$D^- \subset B^-$, respectively$)$ 
with $\# (D^+ \cap L) = 1$ $($$\# (D^- \cap L) = 1$, respectively$)$, 
the Dehn twist $\tau_\alpha : (S', S' \cap L) \to (S' , S' \cap L)$ 
extends to an element of $\MCG ( B^+_{(p, 1)} , B^+_{(p, 1)} \cap L)$ 
$($$\MCG ( B^+_{(p, 1)} , B^+_{(p, 1)} \cap L)$, respectively$)$. 
Therefore, $\tau_\alpha \in \MCG (\Sigma_{0, 2n+2})$ defines an element $\hat{\tau}_{\alpha}$ of 
$\mathcal{G}(L;  S'  )$, whose order is clearly infinite. 
Note that $\tau_\alpha = \Gamma (b)$, where $b$ is  the element of $\SW_{2n+2}$ shown in 
Figure $\ref{fig_stabilized_goeritz}$$(2)$.
\begin{center}
\begin{figure}[t]
\includegraphics[width=12.5cm]{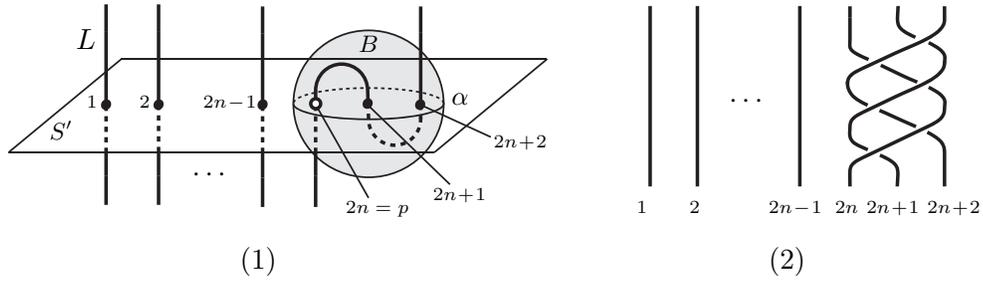}
\begin{picture}(400,0)(0,0)
\put(28,83){$L$}
\put(32,61){\begin{tiny}$1$\end{tiny}}
\put(52,61){\begin{tiny}$2$\end{tiny}}
\put(76,61){\begin{tiny}$2n \! - \! 1$\end{tiny}}
\put(130,21){\begin{tiny}$2n=p$\end{tiny}}
\put(163,26){\begin{tiny}$2n \! + \! 1$\end{tiny}}
\put(186,46){\begin{tiny}$2n \! + \! 2$\end{tiny}}
\put(170,62){\begin{small}$\alpha$\end{small}}
\put(72,33){$\cdots$}
\put(18,49){\begin{footnotesize}$S'$\end{footnotesize}}
\put(135,82){\begin{footnotesize}$B$\end{footnotesize}}
\put(275,61){$\cdots$}
\put(240,21){\begin{tiny}$1$\end{tiny}}
\put(260,21){\begin{tiny}$2$\end{tiny}}
\put(290,21){\begin{tiny}$2n \! - \! 1$\end{tiny}}
\put(315,21){\begin{tiny}$2n$\end{tiny}}
\put(327,21){\begin{tiny}$2n \! + \! 1$\end{tiny}}
\put(350,21){\begin{tiny}$2n \! + \! 2$\end{tiny}}
\put(90,0){$(1)$}
\put(290,0){$(2)$}
\end{picture}
\caption{(1) {A stabilized $(n+1)$-bridge decomposition of a link $L$ and the} $3$-ball $B${.} 
(2) An element $b \in \SW_{2n+2}$ (which is a ``full twist" with 3 strands).}
\label{fig_stabilized_goeritz}
\end{figure}
\end{center}
 
The infinite-order elements of the Goeritz group of a stabilized bridge decomposition 
we have given so far are all reducible: each of them is 
an extension of either a single Dehn twist (Figure \ref{fig_stabilized_goeritz}) 
or the composition of the 
Dehn twists about three disjoint simple closed curves 
(Example $\ref{ex:distance and infinite order Goeritz groups}$) 
in the bridge sphere. 
In this section, we  discuss 
pseudo-Anosov elements in that Goeritz group. 
In fact, we prove Theorem~\ref{introthm_pA elements in a stabilized bridge decomposition}, 
which is restated below.

\begin{thm}
\label{thm_pA elements in a stabilized bridge decomposition}
Let $(L; S)$ be an $n$-bridge decomposition of a link $L$ in $S^3$ 
with $n \geq 2$. 
Let $p$ be an arbitrary point in $S \cap L$. 
If $(L; S) = (O_2; 2)$, then the Goeritz group $\mathcal{G} (L; S_{(p,1)})$ 
is an infinite group consisting only of reducible elements. 
Otherwise, the Goeritz group $\mathcal{G} (L; S_{(p,1)})$ 
contains a pseudo-Anosov element. 
\end{thm}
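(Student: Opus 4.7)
The plan splits along the two cases of the theorem. In both, infiniteness of $\mathcal{G}(L; S_{(p,1)})$ is immediate: since $(L; S_{(p,1)})$ is an $(n+1)$-bridge decomposition with $n+1 \geq 3$, Lemma~\ref{lem: distance stabilized} gives $d(L; S_{(p,1)}) \leq 1$, and Example~\ref{ex:distance and infinite order Goeritz groups} then exhibits an infinite-order element.

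For the excluded case $(L;S) = (O_2;2)$, I will rule out pseudo-Anosov elements by producing a canonical invariant curve. Writing $L_1, L_2$ for the two components of $O_2$ with $p \in L_1$, the stabilization gives $\#(L_1 \cap S_{(p,1)}) = 4$ and $\#(L_2 \cap S_{(p,1)}) = 2$; since these cardinalities differ, no element of $\mathcal{G}(O_2; S_{(p,1)})$ can interchange $L_1$ and $L_2$, and hence each element preserves the partition of the six marked points of $S_{(p,1)}$ into the $4$-subset and the $2$-subset. Because $O_2$ is split, an innermost-disk argument in $S^3 \setminus L$ yields an embedded $2$-sphere $X$ separating $L_1$ from $L_2$ with $X \cap S_{(p,1)}$ a single essential simple closed curve $\gamma$ on $S_{(p,1)} \setminus L$ that realizes the above partition. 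On $\Sigma_{0,6}$ with marked points partitioned in this way there is a unique such isotopy class, so $[\gamma]$ is fixed by every element of $\mathcal{G}(O_2; S_{(p,1)})$, which rules out pseudo-Anosov.

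For the remaining case $(L;S) \neq (O_2;2)$, my strategy is a Thurston--Fathi-style construction: given reducible elements $f, g \in \mathcal{G}(L; S_{(p,1)})$ whose canonical reducing systems $A, B$ jointly fill $\Sigma_{0, 2n+2}$, the product $f^N g^{-M}$ is pseudo-Anosov for sufficiently large $N, M$. I would take $f = \hat\tau_\alpha$, the Dehn twist about the stabilization curve $\alpha$ of Figure~\ref{fig_stabilized_goeritz}, which is reducible with $A = \{\alpha\}$. For $g$ I propose either a conjugate $h \hat\tau_\alpha h^{-1} = \hat\tau_{h(\alpha)}$ with $h \in \mathcal{G}(L; S_{(p,1)})$ chosen so that $h(\alpha)$ fills together with $\alpha$, or an element of the unstabilised Goeritz group $\mathcal{G}(L; S)$ extended to $\mathcal{G}(L; S_{(p,1)})$ as the identity on a neighbourhood of the stabilization region; in either case explicit braid-level representatives are available via Theorem~\ref{thm_bridge_char}.

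The main obstacle is the filling verification in Case 2, which must hold exactly under the hypothesis $(L;S) \neq (O_2;2)$: one needs to show that no essential simple closed curve on $S_{(p,1)} \setminus L$ is disjoint from both $\alpha$ and the reducing curves of $g$. In the excluded case this property fails because of the separating curve $\gamma$ of Case 1, so the heart of the argument is to locate where the partition obstruction disappears. I anticipate splitting into sub-cases (knot, non-split multi-component link, and split multi-component link other than $O_2$) and in each verifying filling by an intersection computation in the braid picture afforded by $\mathcal{G}(L; S_{(p,1)}) \cong \Gamma(\SW_{2n+2}(\mathcal{A}, {}^b \mathcal{A}))$, or alternatively, via Theorems~\ref{introthm:HG(M;V)/i} and \ref{introthm:hyperelliptic Goeritz group as an intersection}, by exhibiting a hyperelliptic pseudo-Anosov element in the Goeritz group of the corresponding double branched cover.
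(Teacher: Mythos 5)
Both halves of your proposal contain genuine gaps.

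In the case $(L;S)=(O_2;2)$, the decisive step fails: it is \emph{not} true that on $\Sigma_{0,6}$ there is a unique isotopy class of essential simple closed curve realizing a given $4+2$ partition of the marked points. There are infinitely many such classes (any two differ by a mapping class preserving the partition, e.g.\ by Dehn twists about curves meeting $\gamma$), so knowing that every element of $\mathcal{G}(O_2;S_{(p,1)})$ preserves the partition of the marked points does not produce an invariant curve, and reducibility of every element does not follow. The paper handles this case by a completely different route: it passes to the $2$-fold branched cover, identifies $\mathcal{G}(O_2;S_{(p,1)})$ with $\mathcal{G}(S^2\times S^1;\Sigma)/\langle[T]\rangle$ for the genus-$2$ splitting via Theorems~\ref{thm:HG(M;V)/i} and \ref{thm:hyperelliptic Goeritz group as an intersection}, and invokes the theorem of Cho--Koda \cite{ChoKoda14} that $\mathcal{G}(S^2\times S^1;\Sigma)$ is an infinite group consisting only of reducible elements. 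That is a substantive external input for which your partition argument is not a substitute.

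In the case $(L;S)\neq(O_2;2)$, you have correctly identified the crux --- producing two curves whose twists lie in $\mathcal{G}(L;S_{(p,1)})$ and which fill the marked sphere --- but you have deferred it rather than proved it, and the candidates you name are unlikely to work as stated: the stabilization curve $\alpha$ of Figure~\ref{fig_stabilized_goeritz} only encircles the three marked points created near $p$, and an element of $\mathcal{G}(L;S)$ extended by the identity over the stabilization region has reducing system disjoint from those three points, so neither pairing obviously fills. The paper supplies exactly this missing step as Lemma~\ref{lem_filling pair}: for $n\geq 3$ one takes a pseudo-Anosov element $\phi$ of $\MCG(B^+,B^+\cap L)$ fixing $\partial B^+\cap L$ pointwise, uses \cite[Proposition 4.6]{MasurMinsky99} to push a curve of $\mathcal{D}^+_p$ arbitrarily far in $\mathcal{C}(S_L)$ while staying in $\mathcal{D}^+_p$, and concludes by the triangle inequality that some pair $\alpha^+\in\mathcal{D}^+_p$, $\alpha^-\in\mathcal{D}^-_p$ has distance $\geq 3$, hence fills ($n=2$ is checked directly from the Schubert normal form, and this is precisely where $(O_2;2)$ is excluded). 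These curves are then transported through the stabilization to curves $\widehat{\alpha}_\beta,\widehat{\alpha}_\gamma$ bounding once-punctured disks on both sides, so both twists extend to the Goeritz group, and Penner's criterion \cite{Penner88} applied to $\tau_{\widehat{\alpha}_\beta}\circ\tau_{\widehat{\alpha}_\gamma}^{-1}$ finishes the proof. Without an argument of this kind your plan does not close.
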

 
There are two ingredients for the construction of 
a pseudo-Anosov element in the above theorem. 
One is a slight modification of the element given 
in the first paragraph of this section, which corresponds to 
a Dehn twist about a simple closed curve in $S_L$. 
The other is a construction of pseudo-Anosov elements by Penner \cite{Penner88}. 

In the following arguments, we always assume that 
curves under consideration in a (marked) surface 
are properly embedded, 
and their intersection is transverse and minimal up to isotopy. 
%the intersection of any simple closed curves is always assumed to be transverse and minimal up to isotopy. 
We will not distinguish 
curves, surfaces and homeomorphisms from their isotopy
classes in this section. 

Let 
$$(L; S) = (B^+ \cap L) \cup_{S} (B^- \cap L)$$  
be an $n$-bridge decomposition of a link $L$ in $S^3$ with 
$n \geq 2$. 
Let $p$ be an arbitrary point in $S \cap L$. 
Then there exists a unique component $T^+$ ($T^-$, respectively) of 
$B^+ \cap L$ ($B^- \cap L$, respectively) one of whose 
endpoints is $p$. 
%Let $p^+$ ($p^-$, respectively) be the other end point of $T^+$ ($T^-$, respectively).  

\begin{center}
\begin{figure}[t]
\includegraphics[width=8cm]{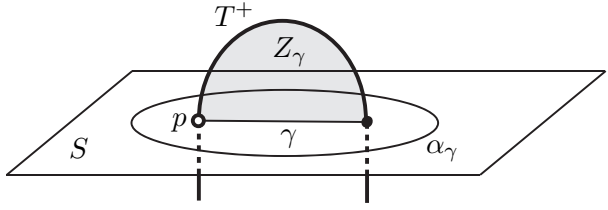}
\begin{picture}(400,0)(0,0)
\put(145,80){$T^+$}
\put(167,68){$Z_\gamma$}
\put(170,36){$\gamma$}
\put(129,42){$p$}
\put(90,30){$S$}
\put(225,32){$\alpha_\gamma$}
\end{picture}
\caption{A reference arc $\gamma$ for $T^+$.}
\label{fig_reference_arc}
\end{figure}
\end{center}

A simple arc $\gamma$ in the marked sphere $(S, S \cap L)$  
with $\partial \gamma = \partial T^+$ is called 
a {\it reference arc} for $T^+$ if 
there exists a disk $Z_\gamma$ embedded in $B^+$ 
such that $Z_\gamma \cap L = \partial Z_\gamma \cap L = T^+$ and $\partial Z_\gamma - T^+ = \gamma$. 
A simple closed curve $\alpha \in \mathcal{D}^+$ is said to be {\it associated with} $p$ 
if there exists a reference arc $\gamma$ for $T^+$ with 
$\alpha = \partial N (\gamma ; S)$. 
In this case,  we write $\alpha = \alpha_\gamma$. 
See Figure \ref{fig_reference_arc}. 
We note that $Z_\gamma$ and $\alpha_\gamma$ as above are uniquely determined for each $\gamma$. 
We denote by $\mathcal{D}_p^+$ the subset of $\mathcal{D}^+$ consisting of 
simple closed curves associated with $p$. 
The subset $\mathcal{D}_p^- \subset \mathcal{D}^-$ is defined exactly in the same way as above 
(using ``$-$" instead of ``$+$"). 
Two simple closed curves $\alpha , \beta \in \mathcal{C} (S_L)$ are said to {\it fill} the surface $S_L$ if 
the union $\alpha \cup \beta$ cuts $S_L$ into 
open disks and half-open annuli. 

%A disk $Z^+$ embedded in $B^+$ is called a {\it bridge disk for} $T^+$ 
%if $Z^+ \cap L = \partial Z^+ \cap L = T^+$ and 
%$\partial Z^+ - T^+ \subset S$. 
%A simple closed curve $\alpha \in \mathcal{D}^+$ is said to be {\it associated with} $p$ 
%if there exists a bridge disk $Z_\alpha$ for $T^+$ with 
%$\alpha = \partial N (Z_\alpha \cap S; S)$. 
%In this case, we denote the arc $\partial Z^+ - T^+ \subset S$ by $A_\alpha$ and call it 
%the {\it reference arc} for $\alpha$. 
%We denote by $\mathcal{A}_p^+$ the subset of $\mathcal{D}^+$ consisting of 
%simple closed curves associated with $p$. 
%The subset $\mathcal{A}_p^- \subset \mathcal{D}^+$ is defined exactly in the same way as above 
%(by just using ``$-$" instead of ``$+$"). 
%Two simple closed curves $\alpha , \beta \in \mathcal{C} (S_L)$ said to fill the surface $S_L$ if 
%the union $\alpha \cup \beta$ cuts $S_L$ into open disks and once-punctured closed disks. 

\begin{lem}
\label{lem_filling pair}
If $(L; S) \neq (O_2; 2)$, then there exist simple closed curves 
$\alpha^{+} \in \mathcal{D}_p^+$ and 
$\alpha^{-} \in \mathcal{D}_p^-$ 
that fill $S_L$. 
\end{lem}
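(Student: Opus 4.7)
The plan is to split the proof by whether $n = 2$ or $n \geq 3$, since the freedom in choosing curves in $\mathcal{D}_p^\pm$ is qualitatively different in these two regimes.

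For $n = 2$, Example~\ref{ex:Goeritz group of 2-bridge decomposition} shows that each of $B^\pm \setminus L$ contains a unique essential separating disk $D^\pm$, and $\partial D^\pm \in \mathcal{D}_p^\pm$ since it automatically separates $\partial T^\pm$ from the other two marked points on the four-holed sphere $S_L$. The tangle arcs of $B^\pm \cap L$ partition $S \cap L$ into two pairs $\{\partial T^\pm, \partial T_2^\pm\}$. Two trivial $2$-tangles with the same partition match along a common separating disk to give the two-component trivial link, so the hypothesis $(L;S) \neq (O_2; 2)$ forces the two partitions to differ. The $2 \times 2$ matrix whose $(A, A')$-entry is $|A \cap A'|$, with $A \in \{\partial T^+, \partial T_2^+\}$ and $A' \in \{\partial T^-, \partial T_2^-\}$, is then the all-ones matrix, meaning each of the four ``sign quadrants'' of $S^2$ determined by $\partial D^+$ and $\partial D^-$ contains exactly one marked point. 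Every complementary region of $\partial D^+ \cup \partial D^-$ in $S^2$ lies in one such quadrant and thus contains at most one marked point, so the complement in $S_L$ consists of disks and once-punctured disks: the pair fills.

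For $n \geq 3$, I build the filling pair by sliding a reference disk. Take $\alpha^- := \alpha_{\gamma^-_0}$ where $\gamma^-_0$ is the standard reference arc hugging $T^-$. The handlebody $V^+ := B^+ \setminus N(L \setminus T^+)$ contains disks bounded by the standard reference arcs $\gamma_i^+$ for the other tangle arcs $T_i^+$ of $B^+$, and each disk slide of $Z_{\gamma^+_0}$ over such a disk produces a new reference arc $\gamma^+$ for $T^+$: the slide amounts to a band sum of $\gamma^+_0$ with $\gamma_i^+$ inside $S$, which preserves the nullhomotopy of $\gamma^+ \cup T^+$ in $V^+$. Iterating, for each $i \in \{2, \ldots, n\}$ one obtains a $\mathbb{Z}$-family of reference arcs parameterised by a slide count $k_i$. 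Setting $\alpha^+ := \alpha_{\gamma^+}$, the geometric intersection $i(\alpha^+, \alpha^-)$ grows linearly in $\sum_i |k_i|$, since each slide that carries $\gamma^+$ near a marked point inside the disk bounded by $\alpha^-$ on $S^2$ contributes essential crossings. A cell-counting argument shows that for $|k_i|$ sufficiently large, every complementary region of $\alpha^+ \cup \alpha^-$ on $S^2$ contains at most one marked point, which is exactly the filling condition for $S_L$. The degenerate subcase $p' = p''$, in which $T^+ \cup T^-$ is a closed unknotted component of $L$ and the standard curves $\alpha^\pm_0$ lie in the same separation class of $S_L$, is handled by the same procedure once a single slide puts $\alpha^+$ and $\alpha^-$ in distinct isotopy classes of that class.

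The main obstacle is the cell-counting step in the $n \geq 3$ case: one has to show that the slide counts $k_i$ can be chosen so that every complementary region of $\alpha^+ \cup \alpha^-$ containing more than one marked point is subdivided by additional slides around adjacent tangle-endpoint pairs, reducing to a finite inductive procedure that strictly decreases the number of ``bad'' regions. The existence of the slide family and the linear growth of intersection with $\alpha^-$ are immediate consequences of the disk-sliding construction in $V^+$.
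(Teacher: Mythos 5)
Your $n=2$ case contains a false step. You claim that two trivial $2$-tangles inducing the same partition of the four marked points must glue to the $2$-component trivial link, and you use this to deduce that the two partitions differ, which is what makes your ``all-ones matrix'' and quadrant argument work. But this is wrong: for \emph{every} $2$-component $2$-bridge link $S(p,r)$ with $p$ even --- the Hopf link $S(2,1)$ being the simplest --- each component consists of one overbridge and one underbridge sharing both endpoints, so the two partitions coincide even though the link is not $O_2$. (Two essential curves on the $4$-holed sphere inducing the same partition need not be isotopic; there are infinitely many isotopy classes realizing each partition.) For these links your intersection matrix is not all-ones, the four ``quadrants'' do not each contain one marked point, and the argument collapses precisely in a family of cases the lemma must cover. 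The paper instead verifies filling directly from the Schubert normal form of $S(p,r)$, which handles the same-partition links.

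In the $n\geq 3$ case the decisive step is missing. You correctly observe that disk slides generate a large family of reference arcs in $\mathcal{D}_p^+$ and that intersection with a fixed $\alpha^-$ can be made large, but large geometric intersection number does not imply filling: two curves can intersect arbitrarily often while both staying disjoint from a fixed essential curve, hence never filling. Your ``cell-counting argument'' and the inductive procedure that ``strictly decreases the number of bad regions'' is exactly the content that needs proof, and you leave it as an assertion; there is no a priori reason the slide moves available to you can subdivide every bad region while keeping the curve in $\mathcal{D}_p^+$. The paper avoids this entirely: it takes a pseudo-Anosov element $\phi$ of $\MCG(B^+, B^+\cap L)$ (which exists for $n\geq 3$ by \cite[Proposition 1.3]{HiroseKin17}), passes to a power fixing $\partial B^+\cap L$ pointwise so that $\phi^{k}(\alpha^+)$ stays in $\mathcal{D}_p^+$, uses Masur--Minsky to find $k_0$ with $d_{\mathcal{C}(S_L)}(\alpha^+,\phi^{k_0}(\alpha^+))\geq 5$, and concludes by the triangle inequality that one of $\alpha^+$, $\phi^{k_0}(\alpha^+)$ is at distance at least $3$ from any chosen $\alpha^-\in\mathcal{D}_p^-$; distance at least $3$ in the curve graph is equivalent to filling. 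If you want to salvage your slide-based construction, you would need to prove a quantitative statement of that kind (e.g.\ that the slides move $\alpha^+$ an unbounded distance in $\mathcal{C}(S_L)$), at which point you are essentially reconstructing the paper's argument.
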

\begin{proof}
Suppose first that $n=2$, that is, $(L; S)$ is a 2-bridge decomposition. 
In this case, we have $\mathcal{D}_p^+ = \mathcal{D}^+$ ($\mathcal{D}_p^- = \mathcal{D}^-$, respectively), 
and it consists of only one 
simple closed curve $\alpha^{+} $ 
($\alpha^{-}$, respectively) 
(cf. Example \ref{ex:Goeritz group of 2-bridge decomposition}). 
If $(L; S) \neq (O_2; 2)$, then using its Schubert normal form 
it is easily seen that $\alpha^{+}$ and 
$\alpha^{-}$ fill $S_L$. 
(In the case $(L; S) = (O_2; 2)$, we have 
$\alpha^{+} = \alpha^{-}$ 
and they do not fill $S_L$.) 

In the following we suppose that $n \geq 3$. 
Choose arbitrary simple closed curves 
$\alpha^{+} \in \mathcal{D}_p^+$ and $\beta \in \mathcal{D}_p^-$. 
By \cite[Proposition 1.3]{HiroseKin17} there exists a pseudo-Anosov element $\phi$ in 
$\MCG(B^+, B^+ \cap L)$. 
By replacing $\phi$ with some positive power, if necessary, we can assume that 
$\phi$ is the identity on $\partial B^+ \cap L$. 
It follows from Masur-Minsky \cite[Proposition 4.6]{MasurMinsky99} that 
$$ \lim_{k \to \infty} d_{\mathcal{C} (S_L)} 
(\alpha^{+} , \phi^k ( \alpha^{+} ))  =\infty, $$ 
which in particular implies that 
there exists $k_0 \in \N$ with $d_{\mathcal{C} (S_L)} 
(\alpha^{+} , \phi^{k_0} ( \alpha^{+} )) \geq 5$. 
Since we have assumed that $\phi$ fixes $\partial B^+ \cap L$, 
the image
$\phi^{k_0} ( \alpha^{+} )$ remains to be contained in 
$\mathcal{D}_p^+$. 
Now by applying triangle inequalities we have $d_{\mathcal{C} (S_L)} 
(\alpha^{+} , \alpha^{-} ) \geq 3$ or 
$d_{\mathcal{C} (S_L)} (\phi^{k_0} ( \alpha^{+} ) , 
\alpha^{-}) \geq 3$, which implies the assertion. 
\end{proof}

\begin{center}
\begin{figure}[t]
\includegraphics[width=13cm]{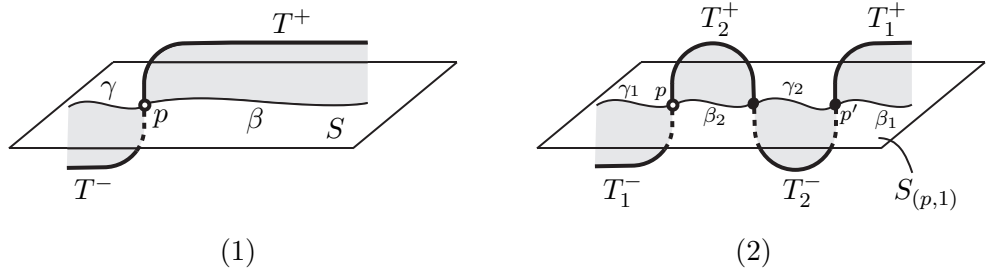}
\begin{picture}(400,0)(0,0)
\put(100,86){$T^+$}
\put(25,23){$T^-$}
\put(55,52){$p$}
\put(120,46){$S$}
\put(90,50){$\beta$}
\put(35,63){$\gamma$}
\put(262,88){$T_2^+$}
\put(225,23){$T_1^-$}
\put(325,88){$T_1^+$}
\put(293,23){$T_2^-$}
\put(242,62){\begin{scriptsize} $p$ \end{scriptsize}}
\put(312,53){\begin{scriptsize} $p'$ \end{scriptsize}}
%\put(228,55){\begin{scriptsize} $\gamma_1$ \end{scriptsize}}
\put(228,63){\begin{scriptsize} $\gamma_1$ \end{scriptsize}}
\put(325,51){\begin{scriptsize} $\beta_1$ \end{scriptsize}}
\put(290,64){\begin{scriptsize} $\gamma_2$ \end{scriptsize}}
\put(260,52){\begin{scriptsize} $\beta_2$ \end{scriptsize}}
\put(335,23){$S_{(p,1)}$}
\put(80,0){(1)}
\put(275,0){(2)}
\end{picture}
\caption{(1) The bridge decomposition $(L; S)$ around $p$. 
(2) The bridge decomposition $(L; S_{(p,1)})$ around $p$. }
\label{fig_stabilized_pA}
\end{figure}
\end{center}

\begin{center}
\begin{figure}[t]
\includegraphics[width=10cm]{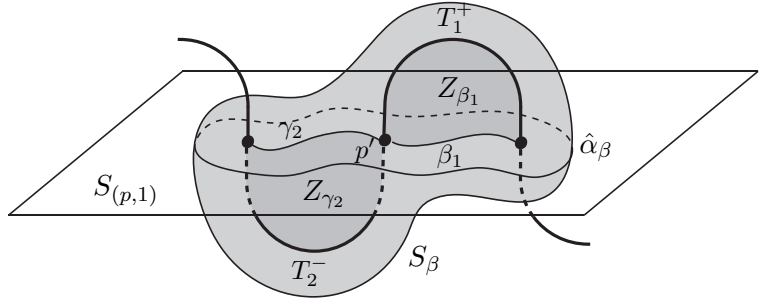}
\begin{picture}(400,0)(0,0)
\put(70,52){$S_{(p,1)}$}
\put(169,66){\begin{small}$p'$\end{small}}
\put(200,116){\begin{small}$T_1^+$\end{small}}
\put(145,20){\begin{small}$T_2^-$\end{small}}
\put(200,64){\begin{small}$\beta_1$\end{small}}
\put(140,75){\begin{small}$\gamma_2$\end{small}}
\put(200,89){$Z_{\beta_1}$}
\put(149,50){$Z_{\gamma_2}$}
\put(254,68){$\hat{\alpha}_\beta$}
\put(189,25){$S_\beta$}
\end{picture}
\caption{The sphere $S_\beta$. }
\label{fig_stabilized_pA2}
\end{figure}
\end{center}

\begin{proof}[Proof of Theorem $\ref{thm_pA elements in a stabilized bridge decomposition}$]
By a 1-fold stabilization of $(L; S)$ at $p$, we obtain a bridge decomposition 
$$(L; S_{(p, 1)}) = (B_{(p, 1)}^+ \cap L) \cup_{S_{(p, 1)}} (B_{(p, 1)}^- \cap L) . $$  

Suppose first that $(L; S) \neq (O_2; 2)$. 
By Lemma \ref{lem_filling pair} 
there exist reference arcs $\beta$ and $\gamma$ for $T^+$ and $T^-$, respectively, such that 
simple closed curves 
$\alpha_\beta \in \mathcal{D}_p^+$ and $\alpha_\gamma \in 
\mathcal{D}_p^-$ 
fill $S_L$. 
%Let $Z_\alpha \subset B^+$ and $Z_\beta \subset B^-$ be the bridge disks corresponding to 
%$\alpha$ and $\beta$, respectively. 
% illustrates the bridge decompositions $(L; S)$ and $(L; S_{(p, 1)})$ around the point $p$. 
Let $T_1^+$ and $T_2^+$ ($T_1^-$ and $T_2^-$, repsectively) be the components of 
$B_{(p, 1)}^+ \cap L$ ($B_{(p, 1)}^{-} \cap L$, respectively), 
$p'$ the point of $S_{(p, 1)} \cap L$ as shown in Figure \ref{fig_stabilized_pA}. 
Since the component $T_1^+$ ($T_1^-$, respectively) of $B_{(p, 1)}^+ \cap L$ 
($B_{(p, 1)}^- \cap L$, respectively) naturally corresponds to 
$T^+$ ($T^-$, respectively), 
the reference arc $\beta$ ($\gamma$, respectively)  
defines in a canonical way 
a reference arc $\beta_1$ ($\gamma_1$, respectively) for $T_1^+$ ($T_1^-$, respectively). 
Here we note that under the convention in Section \ref{subsection_Bridge-decompositions} 
(Figure \ref{fig_sphere-stab}), 
$\gamma_1$ is nothing but $\gamma$ thought of as being embedded in $S_{(p, 1)}$. 
Choose a reference arc $\beta_2$ ($\gamma_2$, respectively) for $T_2^+$ ($T_2^-$, respectively) 
 so that $\beta_2 \cap \gamma_1 = \{ p \}$ ($\gamma_2 \cap \beta_1 = \{ p' \}$, respectively). 
 Consider the 2-spheres $S_{\beta} := \partial N (Z_{\beta_1} \cup Z_{\gamma_2})$ and 
 $S_{\gamma} := \partial N (Z_{\gamma_1} \cup Z_{\beta_2})$, see Figure \ref{fig_stabilized_pA2}. 
Since $\alpha_\beta$ and $\alpha_\gamma$ fill the surface $S_L$, 
the simple closed curves 
$\widehat{\alpha}_\beta := S_{(p,1)} \cap S_{\beta}$ and 
$\widehat{\alpha}_{\gamma} := S_{(p,1)} \cap S_{\gamma}$ fill $(S_{(p,1)})_L$. 
Set $D_{\beta}^+ := S_{\beta} \cap B_{(p, 1)}^+$, $D_{\beta}^- := S_{\beta} \cap B_{(p, 1)}^-$, 
$D_{\gamma}^+ := S_{\gamma} \cap B_{(p, 1)}^+$ and $D_{\gamma}^- := S_{\gamma} \cap B_{(p, 1)}^-$. 
Then each of the disks $D_{\beta}^+$, $D_{\beta}^-$, $D_{\gamma}^+$ and $D_{\gamma}^-$ 
intersects $L$ once and transversely.  
This implies that each of the Dehn twists 
$\tau_{\widehat{\alpha}_\beta}$ and 
$\tau_{\widehat{\alpha}_\gamma}$ 
extends to an element of $\mathcal{G}(L; S_{(p,1)})$. 
Now, it follows from Penner \cite{Penner88} that the composition 
$\tau_{\widehat{\alpha}_\beta} \circ 
\tau_{\widehat{\alpha}_\gamma}^{-1}$ gives rise to a pseudo-Anosov element 
of $\mathcal{G} (L; S)$.   

Finally, suppose that $(L; S) = (O_2; 2)$. 
Let $q: S^2 \times S^1 \rightarrow S^3$ be 
the $2$-fold covering branched over $L$. 
Set $\Sigma := q^{-1} (S_{(p,1)})$, which is the unique genus-$2$ Heegaard splitting of $S^2 \times S^1$. 
Let $T$ be the non-trivial deck transformation of $q$. 
Then as we have seen in Example 
\ref{ex:Goeritz group of some 3-bridge decomposition}, 
the group 
$\mathcal{G} (L; S)$ is isomorphic to $\mathcal{G} (S^2 \times S^1 ; \Sigma ) / 
\langle [T] \rangle$.  
The conclusion now follows from Cho-Koda \cite{ChoKoda14}, which shows 
that the Goeritz group 
$\mathcal{G} (S^2 \times S^1 ; \Sigma )$ is an infinite group 
consisting only of reducible elements. 
\end{proof}

\section{Asymptotic behavior of minimal pseudo-Anosov entropies}
\label{section_application}

In this section, we prove Theorems~\ref{introthm_asymptitic behavior for the trivial knot}
and \ref{introthm_asymptotic behavior for the Hopf link}. 

We say that $\beta \in \SB_n$ is {\it pseudo-Anosov} 
if $ \Gamma(\beta) \in \MCG (\Sigma_{0,n})$ is  a pseudo-Anosov mapping class. 
In this case, 
the {\it dilatation} $\lambda(\beta)$ 
(respectively, {\it entropy} $\log \lambda(\beta)$) of $\beta$ is 
defined by the dilatation (respectively, entropy) of  $\Gamma(\beta)$.

Collapsing $\partial D$ to a point, 
we obtain an injective homomorphism 
$$\mathfrak{c}: \MCG (D_n) \rightarrow \MCG (\Sigma_{0, n+1}).$$
We say that $b \in B_n$ is {\it pseudo-Anosov} if 
$ \mathfrak{c}(\Gamma_D(b)) $ is a pseudo-Anosov mapping class. 
Then the {\it dilatation} $\lambda(b)$ 
(respectively, {\it entropy} $\log \lambda(b)$) of $b$ is 
defined by the dilatation (respectively, entropy) of $ \mathfrak{c}(\Gamma_D(b)) $. 
Let 
$$s: B_n \rightarrow \SB_n$$ 
be the surjective homomorphism 
which sends a braid $b \in B_n$ to the spherical braid in $\SB_n$ represented by the same word of letters $\sigma_j^{\pm 1}$'s as $b$. 
Let 
$$s_+: B_n \rightarrow \SB_{n+1}$$ 
be the homomorphism 
which sends a braid $b \in B_n$ to the spherical braid obtained from $s(b)$ with $n$ strands 
adding the $(n+1)$th straight strand. 
Hence $s_+(b)$ is also represented by the same word of letters $\sigma_j^{\pm 1}$'s as $b$. 

\begin{remark}
\label{rem_s+}
By the definition of pseudo-Anosov braids in $B_n$, we see that 
$b \in B_n$ is pseudo-Anosov if and only if 
$s_+(b) \in \SB_{n+1}$ is pseudo-Anosov. 
In this case, $\lambda(b)= \lambda(s_+(b))$ holds. 
\end{remark}

For the proofs of Theorems~\ref{introthm_asymptitic behavior for the trivial knot}
and \ref{introthm_asymptotic behavior for the Hopf link}, 
we use a result in \cite{HiroseKin18}, which we recall now. 
Let $z_n $ be a pseudo-Anosov braid on the plane with $d_n$ strands. 
We say that a sequence $\{z_n\}$ {\it has a small normalized entropy} 
if   $d_n \asymp n$ and 
there is a constant $P>0$ which does not depend on $n$ such that 
%the product of  $d_n$ and the entropy of $z_n$ is  bounded by $P$ from above: 
\begin{equation}
\label{equation_upper-bound}
d_n \cdot  \log \lambda(z_n) \le P . 
\end{equation}
It is known by Penner \cite{Penner91} that 
if $\phi \in \MCG (\Sigma_{0,n})$ is pseudo-Anosov for $n \ge 4$, then 
$\log \lambda(\phi) \ge \frac{\log 2}{4n-12}$. 
This implies that 
% if  $\{z_n\}$ is a sequence  having a small normalized entropy, then 
 if $\{z_n\}$ satisfies (\ref{equation_upper-bound}), then 
  we have 
 $$\log \lambda(z_n) \asymp \frac{1}{n}.$$

For the definition of {\it $i$-increasing planar braids}, 
 see \cite[Section 3]{HiroseKin18}. 
 ($i$ stands for the indices of strands.)
 If a pseudo-Anosov braid $b$ is $i$-increasing, then 
one obtains a pseudo-Anosov braid  $(b \Delta^{2n})_1$ with more strands than $b$ for each $n \ge 1$ 
which is well-defined up to conjugate \cite[Section 4.1]{HiroseKin18}. 
The number of strands of $(b \Delta^{2n})_1$ can be computed from $b$. 
The braid $(b \Delta^{2n})_1$ enjoys the property such that 
the mapping torus of $(b \Delta^{2n})_1$ is homeomorphic to the mapping torus of the original  braid $b$ \cite[Example 4.4(3)]{HiroseKin18}. 
Furthermore, the sequence of pseudo-Anosov braids $\{(b \Delta^{2n})_1\}$ varying $n$ has a small normalized entropy 
\cite[Theorem 5.2(3)]{HiroseKin18}.

\begin{center}
\begin{figure}[t]
\includegraphics[height=3cm]{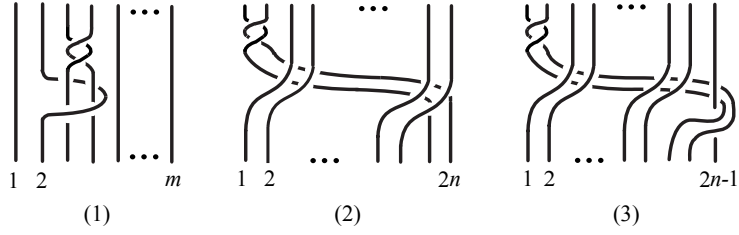}
\caption{
(1) $X= X_m \in B_{m}$. 
(2) $Y =Y_{2n} \in B_{2n}$. 
(3) $Z= Z_{2n-1} \in B_{2n-1}$.} 
\label{fig_geneB}
\end{figure}
\end{center}

For each $m \ge 5$ and $n \ge 3$, 
we define the braids $X,Y$ and $Z$ as follows. 
%\marginal{$X= \omega \in B_m$, $Y= \psi \in B_{2n}$ and $Z= \rho \in B_ {2n-1}$}
\begin{eqnarray*}
X= X_m &=&  (\sigma_2 \sigma_{3})^3 = \sigma_{3}^2 \sigma_2 \sigma_{3}^2 \sigma_2 \in B_m, 
\\
Y= Y_{2n} &=&  \sigma_1^2 \sigma_2 \sigma_3 \cdots \sigma_{2n-1} \sigma_1 \sigma_2 \cdots \sigma_{2n-2} \in B_{2n}, 
\\
Z= Z_{2n-1} &=& \sigma_1^2 \sigma_2 \sigma_3 \cdots \sigma_{2n-2} \sigma_1 \sigma_2 \cdots \sigma_{2n-3} \sigma_{2n-3} \sigma_{2n-2} \in B_{2n-1}, 
%Z=\rho &:=& \sigma_1^2 \sigma_2 \sigma_3 \cdots \sigma_{2n} \sigma_1 \sigma_2 \cdots \sigma_{2n-1} \sigma_{2n-1} \sigma_{2n}.
\end{eqnarray*}
see Figure \ref{fig_geneB}. 
%When $m=5$ and $n=3$, 
The spherical $6$-braids 
$s_+(X_5)$, $s(Y_6)$ and $s_+(Z_5)$ are equal to  $x$, $y$, and $z$, respectively,  in Example \ref{ex_SWA}. 
For each $n \ge 3$, 
we have 
$s(Y), s_+(Z) \in \SB_{2n}$. 
For each $m= 2n-1$ with $n \ge 3$, we have 
$s_+(X) \in \SB_{2n}$. 
We write 
$$x = s_+(X), \ y = s(Y),\ z= s_+(Z) \in \SB_{2n}.$$ 
It is easy to see the following lemma. (Cf. Lemma \ref{lem_SWBC}.)

\begin{lem} 
\label{lem_xyz}
We have $x,y \in  \SW_{2n}(\mathcal{A}, \mathcal{B})$ 
and $x,z \in  \SW_{2n}(\mathcal{A}, \mathcal{C})$. 
\end{lem}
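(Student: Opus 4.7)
I plan to verify the claim by establishing the seven underlying tangle equivalences
${}^{x}\mathcal{A} = \mathcal{A}$, ${}^{x}\mathcal{B} = \mathcal{B}$, ${}^{x}\mathcal{C} = \mathcal{C}$,
${}^{y}\mathcal{A} = \mathcal{A}$, ${}^{y}\mathcal{B} = \mathcal{B}$,
${}^{z}\mathcal{A} = \mathcal{A}$, and ${}^{z}\mathcal{C} = \mathcal{C}$.
The three $x$-equivalences will follow from Lemma \ref{lem_SWBC} by a locality argument, while the four $y$- and $z$-equivalences will be verified directly by diagrammatic inspection, mirroring the $n=3$ case of Figures \ref{fig_ex-wicket} and \ref{fig_ex-wicketBC}.

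For the $x$-equivalences, the key observation is that $X_{2n-1} = (\sigma_2 \sigma_3)^3$ is supported only on the strands at positions $2, 3, 4$, and the operation $s_+$ adjoins one straight strand at position $2n$. Consequently, the spherical braid $x = s_+(X_{2n-1}) \in \SB_{2n}$ acts as the identity on every strand other than the three at positions $2, 3, 4$. For each $\mathcal{T} \in \{\mathcal{A}, \mathcal{B}, \mathcal{C}\}$, the arcs of $\mathcal{T}_n$ that interact with positions $1$ through $6$ coincide with those of $\mathcal{T}_3$, the remaining arcs at positions $7$ through $2n$ being straight parallel copies disjoint from the region where $x$ acts. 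Hence the equivalence ${}^{x} \mathcal{T}_n = \mathcal{T}_n$ reduces to the equivalence ${}^{x} \mathcal{T}_3 = \mathcal{T}_3$, which is exactly the $n = 3$ content of Lemma \ref{lem_SWBC}.

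For the $y$- and $z$-equivalences, the braids $Y_{2n}$ and $Z_{2n-1}$ act nontrivially on all $2n$ strands, so the locality argument is unavailable, and I must verify the equivalences directly by drawing, in the same spirit as Figures \ref{fig_ex-wicket} and \ref{fig_ex-wicketBC} for $n = 3$. The braid $Y_{2n} = \sigma_1^2 \cdot (\sigma_2 \cdots \sigma_{2n-1}) \cdot (\sigma_1 \cdots \sigma_{2n-2})$ has a ``chain of generators'' shape whose overall effect on the stacked trivial tangle can be interpreted as a cyclic rearrangement of the arcs of $\mathcal{A}$ (respectively $\mathcal{B}$) along the bridge sphere; this rearrangement can be undone by a global isotopy in $B^-$ that brings the tangle back to its original form. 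An entirely analogous structural analysis governs $Z_{2n-1}$ acting on $\mathcal{A}$ and $\mathcal{C}$.

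The main obstacle is the explicit pictorial bookkeeping of these global isotopies for arbitrary $n$, which must be described in enough detail to be convincing. However, the argument is a direct strand-by-strand extension of the six-strand case already carried out in Lemma \ref{lem_SWBC}, and no genuinely new phenomenon appears; only the sizes of the chains in $Y_{2n}$ and $Z_{2n-1}$ grow with $n$, while the ``type'' of each crossing remains the same as for $n = 3$.
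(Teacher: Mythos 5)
Your proposal is correct and follows essentially the same route as the paper, which offers no argument beyond ``it is easy to see (cf.\ Lemma \ref{lem_SWBC})'', i.e.\ the same diagrammatic verification of the tangle equivalences ${}^{x}\mathcal{T}=\mathcal{T}$, ${}^{y}\mathcal{T}=\mathcal{T}$, ${}^{z}\mathcal{T}=\mathcal{T}$ that Figures \ref{fig_ex-wicket} and \ref{fig_ex-wicketBC} carry out for $n=3$, extended strand-by-strand to general $n$. Your locality observation for $x$ (that $X_{2n-1}$ is supported on strands $2,3,4$, so only the arcs of $\mathcal{A}_n$, $\mathcal{B}_n$, $\mathcal{C}_n$ meeting those endpoints are affected and the $n=3$ isotopy transplants directly) is a small but genuine sharpening of what the paper leaves implicit.
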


For a subgroup $G \subset \SB_n$ containing a pseudo-Anosov element, 
we write $\ell(G) = \ell(\Gamma(G))$. 
Example \ref{ex:Goeritz group of trivial knot} tells us that 
$$\ell(\mathcal{G}(O; n))= \ell(\SW_{2n} (\mathcal{A}, \mathcal{B})).$$
We can then restate Theorem~\ref{introthm_asymptitic behavior for the trivial knot}
as follows. 
\begin{thm}
\label{thm_ABwicket}
We have 
$$\ell (\mathcal{G}(O; n))  =\ell(\SW_{2n}({\mathcal{A}}, {\mathcal{B}})) \asymp \dfrac{1}{n}.$$
\end{thm}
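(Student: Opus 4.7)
The plan is to establish the two inequalities $\ell(\SW_{2n}(\mathcal{A},\mathcal{B})) \gtrsim 1/n$ and $\ell(\SW_{2n}(\mathcal{A},\mathcal{B})) \lesssim 1/n$ separately.

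For the lower bound, I would invoke Penner's inequality $\log \lambda(\phi) \ge \log 2/(4m-12)$ for a pseudo-Anosov $\phi \in \MCG(\Sigma_{0,m})$ with $m \ge 4$. Since $\SW_{2n}(\mathcal{A},\mathcal{B}) \subset \SB_{2n}$ projects under $\Gamma$ into $\MCG(\Sigma_{0,2n})$, every pseudo-Anosov element of $\mathcal{G}(O;n)$ has entropy bounded below by $\log 2/(8n-12)$, yielding $\ell(\mathcal{G}(O;n)) \ge c/n$ for some universal constant $c>0$. This part is immediate.

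For the upper bound, the strategy is to exhibit, for each sufficiently large $n$, an explicit pseudo-Anosov braid $w_n \in \SW_{2n}(\mathcal{A},\mathcal{B})$ with $\log \lambda(w_n) \le P/n$ for some constant $P$ independent of $n$. The idea is to combine the generators $x = s_+(X)$ and $y = s(Y)$ from Lemma~\ref{lem_xyz}, both of which lie in $\SW_{2n}(\mathcal{A},\mathcal{B})$, with the small normalized entropy construction of \cite{HiroseKin18}. Concretely, I would first choose an $i$-increasing pseudo-Anosov planar braid $b$ built from $X$ and $Y$ (for example, a suitable product such as $XY$ or $X^k Y$ on the minimum number of strands needed for pseudo-Anosovness), verify $i$-increasingness, and then consider the sequence of planar braids $(b\Delta^{2m})_1$. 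By \cite[Theorem~5.2(3)]{HiroseKin18}, the sequence $\{(b\Delta^{2m})_1\}$ has a small normalized entropy; applying $s_+$ (and possibly multiplying by $\Delta^2$ to preserve the wicket structure) gives a spherical braid on $\asymp n$ strands with entropy $O(1/n)$.

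The main obstacle, and the step requiring genuine verification, is to show that membership in $\SW_{2n}(\mathcal{A},\mathcal{B})$ is preserved under the construction $b \mapsto (b\Delta^{2m})_1$. Intuitively, the building blocks $x$ and $y$ each fix the tangles $\mathcal{A}$ and $\mathcal{B}$ (as pictured in Figures~\ref{fig_ex-wicket} and \ref{fig_ex-wicketBC}), and the full twist $\Delta^2$ is central in $\SW_{2n}$. I would argue this by interpreting the $(\cdot)_1$ operation geometrically as insertion of additional parallel strands that can be made to run alongside the existing strands of the trivial tangles $\mathcal{A}$ and $\mathcal{B}$, so that the tangle types are unchanged. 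This essentially amounts to checking that the braid axis used in the $(b\Delta^{2m})_1$ construction can be placed in the complement of both trivial tangles simultaneously.

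Once this tangle-preservation step is in place, combining the resulting sequence $\{w_n\} \subset \SW_{2n}(\mathcal{A},\mathcal{B})$ of pseudo-Anosov braids with $2n \cdot \log \lambda(w_n) \le P$ (a consequence of the small normalized entropy, together with Remark~\ref{rem_s+} equating the planar and spherical dilatations) yields $\ell(\SW_{2n}(\mathcal{A},\mathcal{B})) \le P/(2n)$, completing the proof. I would then remark that essentially the same argument, replacing $\mathcal{B}$ by $\mathcal{C}$ and $y$ by $z$, will be used in the proof of Theorem~\ref{introthm_asymptotic behavior for the Hopf link}.
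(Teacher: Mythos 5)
Your lower bound is exactly the paper's: Penner's inequality $\log\lambda(\phi)\ge \log 2/(4m-12)$ on $\MCG(\Sigma_{0,m})$ gives $\ell(\SW_{2n}(\mathcal{A},\mathcal{B}))\gtrsim 1/n$ with no further work. For the upper bound you have assembled the right ingredients (the generators $x,y$ of Lemma~\ref{lem_xyz} and the small normalized entropy sequences $(b\Delta^{2m})_1$ of \cite{HiroseKin18}), but the step you yourself flag as the main obstacle --- that wicket membership is preserved under $b\mapsto(b\Delta^{2m})_1$ --- is precisely where your plan fails, and your proposed resolution would not go through. That construction is defined by re-fibering the mapping torus of the seed braid and is only well defined up to conjugacy; there is no reason its output should preserve the tangles $\mathcal{A}$ and $\mathcal{B}$, and no ``insert parallel strands alongside the tangle'' argument is available at that level of generality. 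The paper never proves such a preservation statement. Instead it chooses the specific seed $\alpha=XZ\in B_5$ (note: $Z$, not $Y$), proves $\alpha$ is pseudo-Anosov by an explicit conjugacy down to $\sigma_1\sigma_2^{-2}\in B_3$ together with a periodic-orbit blow-up, and then computes the stabilization in closed form: $(\alpha\Delta^{2n})_1=XZ^{2n+1}\in B_{4n+7}$. Membership in the wicket group is then arranged by an extra move absent from your proposal: one deletes the last strand to get $(\alpha\Delta^{2n})_1^{\bullet}=XY^{2n+1}$, whose spherical image $xy^{2n+1}$ lies in $\SW_{4n+6}(\mathcal{A},\mathcal{B})$ simply because it is a word in $x$ and $y$ and Lemma~\ref{lem_xyz} applies; the entropy is unchanged by the deletion for large $n$ by \cite[Lemma 6.3]{HiroseKin18}. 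This strand-removal step is essential: the braid the construction naturally hands you, $xz^{2n+1}=s_+(XZ^{2n+1})$, lies in $\SW(\mathcal{A},\mathcal{C})$ --- it is exactly what the paper uses for the Hopf link --- and not in $\SW(\mathcal{A},\mathcal{B})$.

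Two further gaps: a single family of this kind only realizes strand numbers in one residue class modulo $4$, so the paper also runs the argument with the seed $\alpha^2=XZXZ$ to produce $xyxy^{2n+1}\in\SW_{4n+8}(\mathcal{A},\mathcal{B})$ and cover the other class; your sketch produces at best one congruence class of $n$. And your candidate seeds $XY$ or $X^kY$ are not obviously pseudo-Anosov nor $i$-increasing, and even if they were, there is no analogue of the identity $(\alpha\Delta^{2n})_1=XZ^{2n+1}$ for them, so you would be thrown back on the unavailable general preservation claim. In short, the skeleton is right but the proof lives in the explicit word computations $XZ^{2n+1}\mapsto XY^{2n+1}$, which your proposal does not supply.
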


Theorem~\ref{thm_ABwicket} implies Corollary \ref{introcor_sphere-entropy}. 
The reason is that 
if $\phi \in \MCG (\Sigma_{0, 2g+2 })$ is pseudo-Anosov, then 
each element of $\Pi^{-1}(\phi)$ is pseudo-Anosov with the same entropy as $\phi$. 
Theorem~\ref{thm_ABwicket} together with 
Example \ref{ex:Goeritz group of trivial knot} says that 
$$\ell(\mathcal{HG}_T(S^3; q^{-1}(S_{(p, g )})))  \asymp \frac{1}{  g }.$$ 
Since 
$$\mathcal{HG}_T(S^3; q^{-1}(S_{(p, g )})) \subset \mathcal{G}(S^3; g) \subset \MCG (\Sigma_g)$$ 
and 
$\ell(\MCG (\Sigma_g)) \asymp \frac{1}{g}$ by Penner \cite{Penner91}, 
we conclude that 
$$\ell(\mathcal{G}(S^3; g)) \asymp \frac{1}{g}.$$
%obtain the conclusion of Theorem \ref{thm_sphere-entropy}. 

\begin{proof}[Proof of Theorem~$\ref{thm_ABwicket}$]
In the proof, we regard $D_n$  as the disk with $n$ punctures. 
For braids $X, Z \in B_5$ as above, we consider the product 
$$\alpha:= X Z=
(\sigma_3^2 \sigma_2 \sigma_3^2 \sigma_2) (\sigma_1^2 \sigma_2 \sigma_3 \sigma_4 \sigma_1 \sigma_2 \sigma_3^2 \sigma_4) \in B_5,$$
see Figure \ref{fig_gseedtwist}(1). 
We now claim that  $\alpha $ is pseudo-Anosov. 
To see this, we first observe that 
$$\alpha \Delta^{-2}= \sigma_2 \sigma_3 \sigma_3 \sigma_2 
\sigma_4^{-1} \sigma_3^{-1} \sigma_3^{-1} \sigma_4^{-1} \sigma_2^{-1} \sigma_1^{-1} \sigma_3^{-1} \sigma_2^{-1}. $$
Let $\gamma$ denote the following $5$-braid 
$$\gamma= \sigma_1 \sigma_2 \sigma_4^{-1} \sigma_3^{-1} \sigma_3^{-1} \sigma_4^{-1} \sigma_2^{-1} \sigma_3^{-1}.$$
Then one can check that 
$\eta^{-1} (\alpha \Delta^{-2}) \eta \gamma^{-1}$ equals the identity element in $B_5$, 
where 
$$\eta=  \sigma_1 \sigma_2 \sigma_1 \sigma_3 \sigma_2 \sigma_1 \sigma_4 \cdot \sigma_1 \sigma_2 \sigma_1 \sigma_4 \sigma_3 \sigma_2 \sigma_1 \cdot 
\sigma_2 \sigma_3 \sigma_2 \sigma_1 \sigma_4 \sigma_3 \cdot \sigma_3 \sigma_4 \sigma_3 \sigma_2 \sigma_1.$$
%In particular, 
%$\alpha \Delta^{-2}$ is conjugate to $\gamma$ in $B_5$. 
%Let  $\eta$ denote the following $5$-braid 
%$$\eta=  \sigma_1 \sigma_2 \sigma_1 \sigma_3 \sigma_2 \sigma_1 \sigma_4 \cdot \sigma_1 \sigma_2 \sigma_1 \sigma_4 \sigma_3 \sigma_2 \sigma_1 \cdot 
%\sigma_2 \sigma_3 \sigma_2 \sigma_1 \sigma_4 \sigma_3 \cdot \sigma_3 \sigma_4 \sigma_3 \sigma_2 \sigma_1 \in B_5.$$
%
%
%is conjugate to 
%a braid 
%$$\gamma:= \sigma_1 \sigma_2 \sigma_4^{-1} \sigma_3^{-1} \sigma_3^{-1} \sigma_4^{-1} \sigma_2^{-1} \sigma_3^{-1}.$$
%
%$$\gamma:= \sigma_1 \sigma_2 \sigma_3 \sigma_4^{-1} \sigma_3^{-1} \sigma_2^{-1} 
%\sigma_3^{-1} \sigma_2^{-1} \sigma_4^{-1} \sigma_3^{-1} \sigma_2^{-1} \sigma_4 \in B_5.$$
This implies that $\Gamma_D(\alpha) = \Gamma_D(\alpha \Delta^{-2})$ is conjugate to $\Gamma_D(\gamma)$ in $\MCG (D_5)$.  
It is enough to show that $\gamma $ is pseudo-Anosov, 
for the Nielsen-Thurston types are preserved under the conjugation.

Remove the third and fourth strands from $\gamma$, 
we obtain  $\sigma_1 \sigma_2^{-2} \in B_3$. 
It is easy to see that $\sigma_1 \sigma_2^{-2} $ is  pseudo-Anosov. 
For instance, see \cite{Handel97}. 
Let $\Phi: D_3 \rightarrow D_3$ be a pseudo-Anosov homeomorphism 
which represents $\Gamma_D(\sigma_1 \sigma_2^{-2}) \in \MCG (D_3)$. 
Let $\mathcal{O}$ be a periodic orbit with period $k$ of $\Phi$. 
 Blow up each  periodic point in $\mathcal{O}$. 
Then we still have a pseudo-Anosov homeomorphism 
$\Phi^{\circ}:  D_3 \setminus \mathcal{O} \rightarrow D_3 \setminus \mathcal{O}$ 
defined on the $(3+k)$-punctured disk with the same entropy as $\Phi$. 
By using train track maps for pseudo-Anosov $3$-braids (see \cite{Handel97}), 
it is not hard to show that 
there is a periodic orbit $\mathcal{O}$ with period $2$ 
such that 
%
%(This means that the stable foliation of $\Phi$ has no singularities at each periodic point in $\mathcal{O}$.) 
%Now blow up two  points in $\mathcal{O}$. 
%Then we still have a pseudo-Anosov homeomorphism 
the pseudo-Anosov homeomorphism $\Phi^{\circ}:  D_3 \setminus \mathcal{O} \rightarrow D_3 \setminus \mathcal{O}$ 
 represents  $\Gamma_D(\gamma)  \in \MCG (D_5) $. 
Thus $\gamma$ is pseudo-Anosov.

Next, we  prove that 
$\ell(\SW_{4n+2}({\mathcal{A}}, {\mathcal{B}})) \asymp \frac{1}{n}$. 
We note that  the above $\alpha \in B_5$ is a $5$-increasing braid. 
One sees that  $(\alpha \Delta^{2n})_1$ is written by 
$$(\alpha \Delta^{2n})_1 =   X Z^{2n+1}\in B_ {4n+7}.$$
(See Figure \ref{fig_gseedtwist}(2) in the case $n=1$.)  
Let  $(\alpha \Delta^{2n})_1^{\bullet} $ be the braid with $(4n+6)$ strands obtained from $(\alpha \Delta^{2n})_1 $  
by removing the last strand. 
Then we have 
$$(\alpha \Delta^{2n})_1^{\bullet} = X Y^{2n+1} \in B_{4n+6}.$$
(See Figure \ref{fig_gseedtwist}(3) in the case $n=1$.)  
By Lemma~\ref{lem_xyz}, we have 
$$s((\alpha \Delta^{2n})_1^{\bullet}) = s( X Y^{2n+1})= xy^{2n+1} \in \SW_{4n+6}({\mathcal{A}}, {\mathcal{B}}).$$
Then \cite[Lemma 6.3]{HiroseKin18} tells us that for $n$ large, 
$s((\alpha \Delta^{2n})_1^{\bullet}) $ is a  pseudo-Anosov braid with the same entropy as 
$(\alpha \Delta^{2n})_1 $. 
Therefore we have 
$$\log \lambda (s((\alpha \Delta^{2n})_1^{\bullet}))= \log \lambda(x y^{2n+1}) \asymp \frac{1}{n}, $$ 
since 
the sequence $\{(\alpha \Delta^{2n})_1\}$ varying $n$ has a small normalized entropy \cite[Theorem 5.2(3)]{HiroseKin18}, 
and we are done.

Finally, we  prove $\ell(\SW_{4n}({\mathcal{A}}, {\mathcal{B}})) \asymp \frac{1}{n}$.  
We consider $\alpha^2 = X Z X Z \in B_5$ which is pseudo-Anosov, since so is $\alpha$. 
This is a  $5$-increasing braid as well. 
%\marginal{$u(\alpha^2, 5)= 4$.}
We consider the sequence of pseudo-Anosov braids 
$(\alpha^2 \Delta^{2n})_1$ varying $n$. 
The braid  $(\alpha^2 \Delta^{2n})_1$ can be written by 
$(\alpha^2 \Delta^{2n})_1 = X Z X Z^{2n+1} \in B_ {4n+9}$. 
Then  $(\alpha^2 \Delta^{2n})_1^{\bullet} \in B_ {4n+8}$ obtained from $(\alpha^2 \Delta^{2n})_1$ 
by removing the last strand 
is of the form 
$(\alpha^2 \Delta^{2n})_1^{\bullet} = X Y X Y^{2n+1} \in B_{4n+8}$. 
Hence its spherical braid satisfies the following property. 
$$s((\alpha^2 \Delta^{2n})_1^{\bullet}) = s(X Y X Y^{2n+1} ) = x y x y^{2n+1} \in \SW_{4n+8}({\mathcal{A}}, {\mathcal{B}}).$$
By  \cite[Lemma 6.3]{HiroseKin18} again, it follows that 
for $n$ large, 
$s((\alpha^2 \Delta^{2n})_1^{\bullet})$ is a pseudo-Anosov braid with the same entropy as 
$(\alpha^2 \Delta^{2n})_1$.  
The sequence of pseudo-Anosov braids 
$(\alpha^2 \Delta^{2n})_1$ has a small normalized entropy, and hence 
this property also holds for $s((\alpha^2 \Delta^{2n})_1^{\bullet})= x y  x y^{2n+1} \in \SW_{4n+8}  ({\mathcal{A}}, {\mathcal{B}})$. 
This completes the proof. 
\end{proof}

\begin{center}
\begin{figure}[t]
\includegraphics[height=8cm]{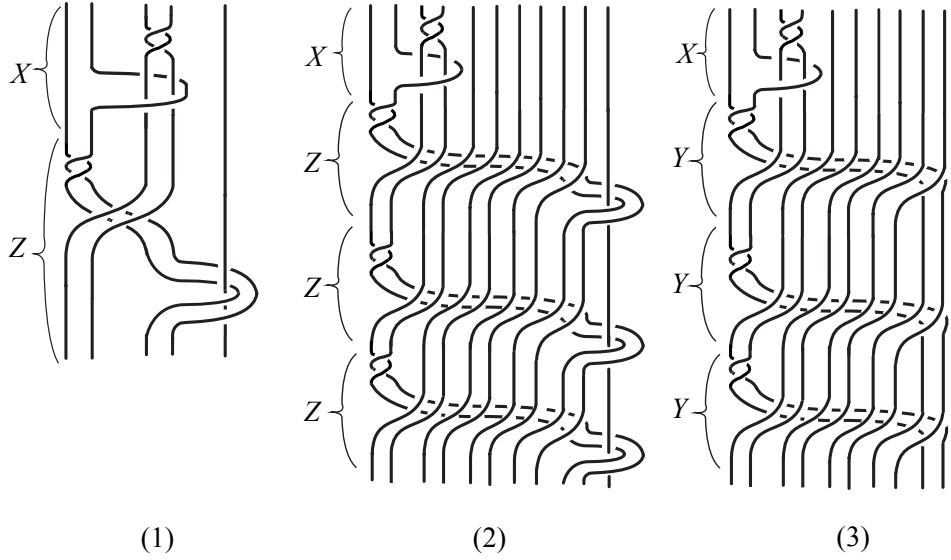} 
\caption{
(1) $\alpha= X_5Z_5 \in B_5$. 
(2) $(\alpha \Delta^2)_1 = X_{11} Z_{11}^3 \in B_{11}$. 
(3) $(\alpha \Delta^2)_1^{\bullet} = X_{10} Y_{10}^3 \in B_{10}$.}
\label{fig_gseedtwist}
\end{figure}
\end{center}

Finally we prove Theorem~\ref{introthm_asymptotic behavior for the Hopf link}. 
By {Example \ref{ex:Goeritz group of Hopf knot}}, 
we have 
$$\ell(\mathcal{G}(H; S_{(p, n - 2)}))= \ell(\SW_{2n}(\mathcal{A}, \mathcal{C})).$$
We restate Theorem~\ref{introthm_asymptotic behavior for the Hopf link}
as follows. 
\begin{thm}
\label{thm_ACwicket}
We have 
$$\ell(\mathcal{G}(H; S_{(p, n-2 )}))= \ell(\SW_{2n}({\mathcal{A}}, {\mathcal{C}})) \asymp \dfrac{1}{n}.$$
\end{thm}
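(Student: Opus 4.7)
The plan is to follow closely the strategy of the proof of Theorem~\ref{thm_ABwicket}, with a structural simplification that makes the $\mathcal{C}$-case actually slightly easier. The lower bound $\ell(\SW_{2n}(\mathcal{A},\mathcal{C})) \gtrsim 1/n$ is immediate from Penner's inequality applied to $\MCG(\Sigma_{0,2n})$: since $\Gamma(\SW_{2n}(\mathcal{A},\mathcal{C})) = \mathcal{G}(H; S_{(p,n-2)})$ is a subgroup of $\MCG(\Sigma_{0,2n})$, every pseudo-Anosov element has entropy at least $(\log 2)/(8n-12)$.

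For the upper bound I will reuse the same planar pseudo-Anosov braids $\alpha = XZ \in B_5$ and $\alpha^2 = XZXZ \in B_5$ that are already shown to be pseudo-Anosov and $5$-increasing in the proof of Theorem~\ref{thm_ABwicket}. The associated sequences
\[
(\alpha\Delta^{2n})_1 = XZ^{2n+1} \in B_{4n+7}, \qquad (\alpha^{2}\Delta^{2n})_1 = XZXZ^{2n+1} \in B_{4n+9}
\]
are pseudo-Anosov for $n$ large and have small normalized entropy by \cite[Theorem~5.2(3)]{HiroseKin18}. The crucial observation is that, by Lemma~\ref{lem_xyz}, $x$ and $z$ already lie in $\SW_{2m}(\mathcal{A},\mathcal{C})$ for every relevant $m$. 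This means the homomorphism $s_+$ can be applied directly, without the ``remove-last-strand'' construction $(\cdot)^{\bullet}$ that was forced on us in the $\mathcal{B}$-case. The resulting spherical braids
\[
s_+\bigl((\alpha\Delta^{2n})_1\bigr) = xz^{2n+1} \in \SW_{4n+8}(\mathcal{A},\mathcal{C}), \quad
s_+\bigl((\alpha^2\Delta^{2n})_1\bigr) = xzxz^{2n+1} \in \SW_{4n+10}(\mathcal{A},\mathcal{C})
\]
are then pseudo-Anosov with dilatations equal to those of the underlying planar braids by Remark~\ref{rem_s+}, so their entropies are $\asymp 1/n$.

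The index $4n+8$ ranges over the even values of $2m$ with $m \geq 6$, and $4n+10$ ranges over the odd values with $m \geq 7$, so together these two families produce pseudo-Anosov elements with entropy $\lesssim 1/m$ in $\SW_{2m}(\mathcal{A},\mathcal{C})$ for every sufficiently large $m$, yielding the upper bound. I do not anticipate any serious obstacle: the pseudo-Anosov property of $\alpha$ and $\alpha^2$, their $5$-increasing character, and the small-normalized-entropy estimate for $(\alpha^k\Delta^{2n})_1$ are all already established within the proof of Theorem~\ref{thm_ABwicket} and in \cite{HiroseKin18}. The only point to verify is that $s_+$, being a group homomorphism, sends these pseudo-Anosov planar braids into the claimed wicket groups, and this is immediate from Lemma~\ref{lem_xyz}.
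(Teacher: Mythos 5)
Your proposal is correct and follows essentially the same route as the paper: the authors likewise obtain the lower bound from Penner's inequality (via the ``small normalized entropy'' framework) and the upper bound by applying $s_+$ directly to the pseudo-Anosov braids $(\alpha\Delta^{2n})_1 = XZ^{2n+1}$ and $(\alpha^2\Delta^{2n})_1 = XZXZ^{2n+1}$, landing in $\SW_{4n+8}(\mathcal{A},\mathcal{C})$ and $\SW_{4n+10}(\mathcal{A},\mathcal{C})$ respectively via Lemma~\ref{lem_xyz} and Remark~\ref{rem_s+}. Your observation that the $(\cdot)^{\bullet}$ construction is unnecessary here is exactly how the paper's proof of Theorem~\ref{thm_ACwicket} differs from that of Theorem~\ref{thm_ABwicket}.
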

As in the proof of Corollary \ref{introcor_sphere-entropy}, 
Corollary \ref{introcor_rp3-entropy} then follows from Theorem \ref{thm_ACwicket} and Example 
\ref{ex:Goeritz group of Hopf knot}.

\begin{proof}[Proof of Theorem~$\ref{thm_ACwicket}$]
We first prove that 
$\ell(\SW_{4n}({\mathcal{A}}, {\mathcal{C}})) \asymp \frac{1}{n}$.  
In the proof of Theorem~\ref{thm_ABwicket}, 
we obtain a sequence of pseudo-Anosov braids 
$(\alpha \Delta^{2n})_1 =   X Z^{2n+1}\in B_ {4n+7}$ having a small normalized entropy.  
We have  
$$s_+((\alpha \Delta^{2n})_1)=s_+( X Z^{2n+1}) = xz^{2n+1} \in \SB_{4n+8},$$ 
and it is  an element of $ \SW_{4n+8}({\mathcal{A}}, {\mathcal{C}})$ by Lemma~\ref{lem_xyz}. 
Since $s_+((\alpha \Delta^{2n})_1)$ is a pseudo-Anosov braid with the same entropy as $(\alpha \Delta^{2n})_1$ (see Remark \ref{rem_s+}), 
we are done. 
%has the same entropy as $(\alpha \Delta^{2n})_1$. 
%Note that $S( \omega \rho^{2n+1}) \in \SW_{4n+8}({\mathcal{A}}, {\mathcal{C}})$. 
%We obtain the desired claim. 

Next, we prove $\ell(\SW_{4n+2}({\mathcal{A}}, {\mathcal{C}})) \asymp \frac{1}{n}$. 
In the proof of Theorem~\ref{thm_ABwicket}, 
we obtain a sequence of pseudo-Anosov braids 
$(\alpha^2 \Delta^{2n})_1 = X Z X Z^{2n+1} \in B_ {4n+9}$ 
having a small normalized entropy. 
By Remark \ref{rem_s+}, 
the spherical braid $s_+( X Z X Z^{2n+1} ) \in \SB_{4n+10}$ 
has the same entropy as $X Z X Z^{2n+1}  \in B_{4n+9}$. 
We have  
$$s_+( X Z X Z^{2n+1} ) = xzxz^{2n+1}$$ 
which is an element of $\SW_{4n+10}({\mathcal{A}}, {\mathcal{C}})$ 
by Lemma~\ref{lem_xyz}. 
This completes the proof. 
\end{proof}
 
Theorems \ref{introthm_asymptitic behavior for the trivial knot} and 
\ref{introthm_asymptotic behavior for the Hopf link} 
motivate us to pose the following question. 

\begin{ques}
For any bridge decomposition $(L; S)$ of a link $L \subset S^3$  and any point $p \in L \cap S$  
do we have $\ell(\mathcal{G}(L; S_{(p,k)})) \asymp \frac{1}{k}$, 
where $S_{(p,k)}$ is the bridge sphere of $L$ obtained from a $k$-fold stabilization of $S$ at $p$?
%Let $(L; S)$ be a bridge decomposition of a link $L$ and let $p \in L \cap S$. 
%Let $S_{(p,k)}$ be the bridge sphere of $L$ obtained from a $k$-fold stabilization of $S$. 
%For the Goeritz group $\mathcal{G}(L; S_{(p,k)})$, 
%is it true that 
%$\ell(\mathcal{G}(L; S_{(p,k)})) \asymp \frac{1}{k}$?
\end{ques}
 
\section*{Acknowledgments} 
The authors wish to express their gratitude to Makoto Ozawa, Masatoshi Sato,  
Robert Tang and Kate Vokes for many helpful comments.

%%%%%%%%%%%%%%%%%%%%%%%%%%%%%%%%%%%%%%%
%
%							References
%
%%%%%%%%%%%%%%%%%%%%%%%%%%%%%%%%%%%%%%%

\medskip
\bibliographystyle{alpha} 
\bibliography{bridge}

\end{document}